\documentclass{siamltex}
\usepackage{amssymb}
\usepackage{lmodern}
\usepackage{bm}
\usepackage{graphicx}
\usepackage{graphics}
\usepackage{caption2}
\usepackage{epstopdf}
\usepackage{psfrag}
\usepackage{amsmath}
\usepackage{lineno}
\usepackage{amscd}
\usepackage{amsfonts}
\usepackage{float}
\usepackage{latexsym}
\usepackage{multirow}\usepackage{bbm}
\usepackage{lscape}
\usepackage{comment}
\usepackage{color}
\usepackage[ruled,linesnumbered]{algorithm2e}

\newcommand{\ds}{\displaystyle}
\newcommand{\f}{\frac}

\newcommand{\om}{\Omega}
\newcommand{\p}{\partial}

\newtheorem{remark}{Remark}[section]
\newtheorem{thm}{Theorem}[section]

\newtheorem{prop}[thm]{Proposition}
\numberwithin{equation}{section}

\usepackage[marginal]{footmisc}

\begin{document}
\title{A time-fractional optimal transport and  mean-field planning:   Formulation and  algorithm}

\author{Yiqun Li\thanks{Department of Mathematics, University of South Carolina, Columbia, SC 29208, USA}
\and
Hong Wang\thanks{Department of Mathematics, University of South Carolina, Columbia, SC 29208, USA}
\and
Wuchen Li\thanks{Department of Mathematics, University of South Carolina, Columbia, SC 29208, USA (Corresponding author, email: wuchen@mailbox.sc.edu)}
}

\maketitle
\begin{abstract}
The time-fractional optimal transport (OT) and mean-field planning (MFP) models are developed to describe the anomalous transport of the agents  in a heterogeneous environment such that  their densities are transported from the initial density distribution to the terminal one with the minimal cost. We derive a strongly coupled nonlinear system of a  time-fractional transport equation and a backward time-fractional Hamilton-Jacobi equation based on the first-order optimality condition. The general-proximal primal-dual hybrid gradient (G-prox PDHG) algorithm is applied to discretize the OT and MFP formulations, in which  a  preconditioner induced by the numerical approximation to the time-fractional PDE is derived to accelerate the convergence of the algorithm for both problems.  Numerical experiments for OT and MFP problems between Gaussian distributions and between image densities   are carried out to investigate the performance of the OT and MFP formulations. Those numerical experiments also demonstrate the effectiveness and flexibility of our proposed algorithm.
\end{abstract}
	
\begin{keywords}
		Optimal transport,   mean-field planning, time-fractional, anomalous transport, Hamilton–Jacobi equation,  general-proximal primal-dual hybrid gradient algorithm
\end{keywords}
	
	\begin{AMS}
	  35Q89, 35R11, 91A16
	\end{AMS}
	
	\pagestyle{myheadings}
	\thispagestyle{plain}
	
\section{Introduction}\label{Model:intro}

Mean-field planning (MFP) problems model the interaction of the  infinite number of agents with the  given initial and terminal density distributions in a mean-field manner, describing the equilibrium state of the system \cite{HuaMal,MicLio}.
In addition, MFP are  generalizations of optimal transport (OT) problems \cite{BenBre1,BenBre2,GlaKop,Lav,LavCla,LeeLai,PapPey,PeyCut,Por,Vil,YuLaiJCP}. OT and MFP problems have demonstrated  wide applications in diverse fields, e.g., pedestrian motion, robotics path planning, reaction-diffusions, reinforced learning as well as   finance  problems \cite{BenCar,ChaPoc1,Gomes,GueLas,HakZhu,HuaMal,LinFun,LiLee,LiLu,LiuJac,RutOsh}.

Note that most works on OT and MFP problems in the literature are governed by the integer-order partial differential equations  (PDEs) \cite{AchCap1,AchCap2,DinLi,FuOsh2,YuLai}.
This issue may be elaborated in the context of OT. Let $\Omega \subset \mathbb{R}^d$ $ (d = 1, 2, 3)$ be a bounded domain, and let $\rho$ : $ \Omega  \times [0,T] \rightarrow \mathbb{R}^{+}$ be the density of agents, $\bm v : \Omega  \times [0,T] \rightarrow \mathbb{R}^d$ be the velocity field and $\bm m : = \rho(\bm x, t) \bm v(\bm x, t)$ be the  mass flux  which models strategies (control) of the agents.
Integer-order OT is reformulated as the following: Find the density $\rho$ and the mass flux $\bm m$ such that
\begin{equation}\label{OMT:e2}
\inf_{\forall \rho, \bm m} ~~\int_0^T \int_{\Omega} \f{ | \bm m(\bm x, t) |^2}{2 \rho(\bm x, t)}~ d\bm x dt,
\end{equation}
which is constrained by
\begin{equation}\begin{array}{cl}\label{Model:Int}
\ds \p_t \rho+\nabla \cdot \bm{m}=0, ~&\ds \hspace{-0.1in}~ \mbox{in}~~ \Omega \times (0, T], \\[0.05in]
\ds \rho(\bm x, 0)=\rho_0(\bm x), \quad \rho(\bm x, T)=\rho_1(\bm x), ~&\ds \hspace{-0.1in}~ \mbox{on} ~~ \Omega,
\\[0.05in]
\ds  \bm {m}(\bm x, t) \cdot \bm{n}(\bm x)=0, ~&\ds \hspace{-0.1in}~ \mbox{on}\; \partial \Omega \times [0, T].
\end{array}
\end{equation}
The derivation of the OT problem reveals that this formulation describes transport taking place in a homogeneous medium \cite{Vil}. This explains why the constraining PDE in \eqref{Model:Int} is integer-order. However, in a heterogeneous porous medium, the interaction between the agents and/or agents and environment dominates the transport process. Consequently, the travel time of the agents may deviate significantly from the transport of the agents in a homogeneous environment \cite{ZhoStr}, leading to anomalous transport that is characterized by nonlocal memory effect described by a time-fractional transport PDE. The corresponding OT formulation \eqref{OMT:e2} constrained by transport in a heterogeneous environment can be described by
\begin{equation}\begin{array}{cl}\label{Model:set}
\ds \p_t^\alpha \rho+\nabla \cdot \bm{m}=0, ~&\ds \hspace{-0.1in}~ \mbox{in}~~ \Omega \times (0, T], \\[0.05in]
\ds \rho(\bm x, 0)=\rho_0(\bm x), \quad \rho(\bm x, T)=\rho_1(\bm x), ~&\ds \hspace{-0.1in}~ \mbox{on} ~~ \Omega ,\\[0.05in]
\ds  \bm {m}(\bm x, t) \cdot \bm{n}(\bm x)=0, ~&\ds \hspace{-0.1in}~ \mbox{on}\; \partial \Omega \times [0, T].
\end{array}
\end{equation}
Here $\p_t^\alpha$ represents the Caputo fractional derivative operator with $\Gamma(\cdot)$ being the Gamma function  defined by \cite{Pod}
\begin{equation}\label{frac}
  \ds  \p_t^\alpha g := {}_0 I_t^{1-\alpha}\partial_t g, \quad {}_0I_t^{1-\alpha}g :=\frac{1}{\Gamma(1-\alpha)}\int_0^t \frac{g(s)}{(t-s)^{\alpha}}ds, \quad \ds 0 < \alpha < 1.
\end{equation}

In this work, we introduce  a Lagrangian multiplier
 and utilize the first-order optimality condition to derive a strongly coupled nonlinear system \eqref{Model:KKT} of a  time-fractional  transport PDE  in terms of the density and a backward time-fractional Hamilton-Jacobi PDE in terms of the  Lagrangian multiplier  $\phi$. Numerically, we discretize the generalized Lagrangian formulations of the OT and  MFP systems with the time-fractional differential operator \eqref{frac} approximated by the widely used L1 scheme. The discretizations of the adjoint operators are developed such that the duality properties of the continuous problem is preserved on the discrete level, based on which we further derive a discrete system \eqref{dis:KKT} approximating the nonlinear system \eqref{Model:KKT} on the first-order optimality condition.
The discrete saddle-point system is solved via the general-proximal primal-dual hybrid gradient (G-prox PDHG) algorithm, in which we apply a  preconditioner  generated by the numerical approximation to the time-fractional PDE to implement this algorithm with larger step sizes independent of the grid sizes to  accelerate the convergence of the algorithm for  both problems.
We carry out numerical experiments to investigate the performance of the OT and  MFP  formulation: \textbf{(i)} We investigate the convergence of the numerical approximation. \textbf{(ii)} With the numerically examined convergence, we investigate the performance of OT and MFP problems between Gaussian distributions and between image densities. Those experiments further demonstrate the effectiveness and flexibility of our proposed algorithm.

 The rest of the paper is organized as follows. In Section \S \ref{frac:MFP}, we formulate the time-fractional OT and MFP problems.
 We then derive a strongly coupled nonlinear system of a  time-fractional  transport PDE in terms of the density and a backward time-fractional Hamilton-Jacobi PDE in terms of the  Lagrangian multiplier  $\phi$ based on the first-order optimality condition.
In \S \ref{Disc:MFP}, we develop a numerical approximation to the generalized Lagrangian formulations of the OT and  MFP problems. We further derive a discrete system approximating the nonlinear system arising from the continuous problem. We then apply the G-prox PDHG algorithm to solve  our problem in \S \ref{PDHG:MFP}. The computational details are provided at the end of this section.
In \S \ref{Nume:MFP},  we carry out numerical experiments to investigate the performance of the OT and MFP  formulation.

\section{Fractional OT and  MFP}\label{frac:MFP}
\subsection{Problem formulation}
 We first investigate some basic  properties of the time-fractional transport PDE   \eqref{Model:set}, governed by which we then formulate the time-fractional OT and MFP problems. We assume that both $\rho$ and $\bm m$ exhibit appropriate smoothness  such that the subsequent derivations are well-defined.

\begin{prop}\label{mass} If problem \eqref{Model:set} is solvable then it is mass-conservative, i.e.,
\begin{equation}\label{thm:mass}
\int_{\Omega} \rho(\bm x, t)\, d \bm x = \int_{\Omega} \rho_0(\bm x)\, d \bm x, \quad \forall t \in [0, T].
\end{equation}
Conversely, if $\int_{\Omega} \rho_0\, d \bm x = \int_{\Omega} \rho_1 \,d \bm x$ then problem \eqref{Model:set} is solvable.
\end{prop}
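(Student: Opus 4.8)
The plan is to treat the two implications separately, in each case reducing the vector PDE constraint to a scalar statement about the total mass $M(t) := \int_{\Omega} \rho(\bm x, t)\, d\bm x$.

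For the forward (mass-conservation) direction, I would integrate the transport equation $\partial_t^\alpha \rho + \nabla \cdot \bm m = 0$ over $\Omega$. The divergence term vanishes by the divergence theorem combined with the no-flux condition $\bm m \cdot \bm n = 0$ on $\partial\Omega$. The remaining task is to commute the Caputo operator with the spatial integral: writing out definition \eqref{frac} and applying Fubini's theorem (legitimate under the assumed smoothness of $\rho$), one gets $\int_{\Omega} \partial_t^\alpha \rho\, d\bm x = \partial_t^\alpha M(t)$, so $\partial_t^\alpha M(t) = 0$ on $(0,T]$. I would then invoke the standard fact that for $0 < \alpha < 1$ a vanishing Caputo derivative forces a constant: applying the Riemann--Liouville integral ${}_0 I_t^\alpha$ and the composition identity ${}_0 I_t^\alpha \partial_t^\alpha M(t) = M(t) - M(0)$ gives $M(t) \equiv M(0) = \int_{\Omega} \rho_0\, d\bm x$, which is exactly \eqref{thm:mass}.

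For the converse (solvability) direction, I would exhibit a feasible pair explicitly. Set $\rho(\bm x, t) = (1 - t^\alpha/T^\alpha)\rho_0(\bm x) + (t^\alpha/T^\alpha)\rho_1(\bm x)$, which satisfies the initial and terminal conditions and, being a convex combination of $\rho_0$ and $\rho_1$ for $t \in [0,T]$, remains nonnegative. Using the Caputo power rule $\partial_t^\alpha t^\alpha = \Gamma(\alpha+1)$, the fractional derivative collapses to the time-independent source $\partial_t^\alpha \rho = \frac{\Gamma(\alpha+1)}{T^\alpha}(\rho_1 - \rho_0)$. It then remains to choose a (time-independent) flux $\bm m$ solving $\nabla \cdot \bm m = -\frac{\Gamma(\alpha+1)}{T^\alpha}(\rho_1 - \rho_0)$ with $\bm m \cdot \bm n = 0$ on $\partial\Omega$; taking $\bm m = \nabla \psi$ reduces this to the Neumann--Poisson problem $\Delta \psi = -\frac{\Gamma(\alpha+1)}{T^\alpha}(\rho_1 - \rho_0)$, $\partial_n \psi = 0$ on $\partial\Omega$.

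The crux of the converse is the solvability of this Neumann problem, which holds precisely when the right-hand side has zero mean over $\Omega$. Here $\int_{\Omega}(\rho_1 - \rho_0)\, d\bm x = 0$ is exactly the hypothesis $\int_{\Omega} \rho_0\, d\bm x = \int_{\Omega} \rho_1\, d\bm x$, so the compatibility condition is met and $\psi$, hence $\bm m$, exists. I expect the main obstacle to lie in the forward direction rather than the converse: specifically, in justifying the operator interchange (the Fubini step) and the implication $\partial_t^\alpha M = 0 \Rightarrow M$ constant, since these are the points where the nonlocal, memory character of the fractional derivative must be handled carefully instead of being treated like an ordinary time derivative.
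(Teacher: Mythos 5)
Your proof is correct, and both directions follow the paper's own strategy; the only genuine difference is the choice of interpolant in the converse. The forward direction is essentially identical to the paper's: integrate the PDE over $\Omega$, eliminate the divergence term via the no-flux condition, commute $\partial_t^\alpha$ with the spatial integral, and apply ${}_0I_t^{\alpha}$ together with ${}_0I_t^{\alpha}\,{}_0I_t^{1-\alpha}={}_0I_t^{1}$ to get $M(t)=M(0)$ --- the composition identity you invoke is exactly this. In the converse, the paper interpolates linearly in time, $\rho=\frac{T-t}{T}\rho_0+\frac{t}{T}\rho_1$, so that $\partial_t^\alpha\rho=\frac{t^{1-\alpha}}{T\Gamma(2-\alpha)}(\rho_1-\rho_0)$ and the Neumann--Poisson source carries a factor $t^{1-\alpha}$, whereas your $t^\alpha$-interpolant uses the power rule $\partial_t^\alpha t^\alpha=\Gamma(\alpha+1)$ to make the source, and hence the flux $\bm m$, independent of time. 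Your variant is marginally cleaner: a single time-independent Poisson solve, with the constraint then satisfied identically in $t$ (in the paper's construction the time dependence also factors out, but the flux scales as $t^{1-\alpha}$). The small price is regularity: your density has $\partial_t\rho\sim t^{\alpha-1}$, unbounded as $t\to 0^+$; this is still integrable, so the Caputo derivative is well-defined pointwise for $t>0$, but the paper's linear interpolant is smooth in time and sits more comfortably with the standing smoothness assumptions. In both constructions the Neumann problem's solvability rests on the same compatibility condition $\int_\Omega(\rho_1-\rho_0)\,d\bm x=0$, which is precisely the hypothesis; you make this requirement explicit where the paper leaves it implicit, which is a point in your write-up's favor.
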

\begin{proof} Suppose \eqref{Model:set} is solvable. Integrate the governing PDE on $\Omega$ and incorporate the no-flow boundary condition in \eqref{Model:set} to obtain
\begin{equation*}
{}_0 I_t^{1-\alpha}\partial_t \int_{\Omega}  \rho(\bm x, t) \,d \bm x =\p_t^\alpha  \int_{\Omega} \rho(\bm x, t) \,d \bm x  = \int_{\Omega} \big (\p_t^\alpha \rho(\bm x, t) + \nabla \cdot \bm m \big ) \,d \bm x =  0.
\end{equation*}
Apply ${}_0 I_t^{\alpha}$ on both sides of the equation and use ${}_0 I_t^{\alpha} \,{}_0 I_t^{1-\alpha} = {}_0 I_t^{1}$ \cite{Pod} to arrive at \eqref{thm:mass}.

Conversely, $\rho(\bm x, t) := \f{(T-t)}{T} \rho_0 (\bm x) + \f{t}{T} \rho_1(\bm x)$ satisfies \eqref{thm:mass}. Direct calculation gives
$$ \p_t^\alpha \rho (\bm x, t)  = \f{ t^{1-\alpha}}{T\Gamma(2-\alpha)} \big(\rho_1(\bm x) - \rho_0(\bm x)\big).$$
Recall that the Neumann boundary-value problem of the Poisson PDE
\begin{equation*}
-\Delta \psi = - \f{ t^{1-\alpha} (\rho_1 (\bm x)- \rho_0(\bm x))}{T\Gamma(2-\alpha)}~~\mathrm{in}~\Omega \times [0,T], \quad
\nabla \psi(\bm x, t) \cdot \bm n(\bm x) = 0 ~~\p \Omega \times [0,T]
\end{equation*}
admits the unique solution up to an additive constant. Hence, $\bm m=- \nabla \psi $ satisfies
\begin{equation*}
 \nabla \cdot \bm{m}  = - \f{ t^{1-\alpha}}{T\Gamma(2-\alpha)} (\rho_1 - \rho_0) = - \p_t^\alpha \rho, \quad \bm m \cdot \bm n = 0.
\end{equation*}
Namely, problem \eqref{Model:set} is solvable.
\end{proof}
\begin{remark} Proposition \ref{mass} extends the well-known result for the integer-order transport PDE \cite{FuOsh1,YuLai} to the current time-fractional transport PDE.
\end{remark}

Let $\mathcal C(\rho_0, \rho_1)$ denote the non-empty set of the solutions $(\rho,\bm m)$ to problem \eqref{Model:set}.
Let $L: \mathbb{R}^{+} \times \mathbb{R}^d \rightarrow \overline{\mathbb{R}}:=\mathbb{R} \cup\{\infty\}$ be the dynamic cost function and $F: \mathbb{R} \rightarrow \overline{\mathbb{R}}$ models interaction cost. The goal of dynamic MFP problem is to minimize the total cost among all feasible $(\rho, \bm m) \in \mathcal C(\rho_0, \rho_1)$
\begin{equation}\begin{array}{l}\label{Model:MFP}
\ds \min _{(\rho, \bm m) \in \mathcal{C} (\rho_0, \rho_1)} \int_0^T \int_{\Omega} L(\rho( \bm x, t), \bm m(\bm x, t)) d \bm x d t+ \int_0^T \int_{\Omega} F(\rho(\bm x, t)) d \bm xd t,
\end{array}
\end{equation}
where we consider a typical dynamic cost function
\begin{equation}\label{OT:L}
  L(\beta_0, \bm{\beta}):=
   \begin{cases}\f{\|\bm{\beta}\|^2}{2 \beta_0} & \text { if } \beta_0>0, \\
  0 & \text { if } \beta_0=0, \bm{\beta}=\mathbf{0}, \\
  +\infty & \text { if } \beta_0=0, \bm{\beta} \neq \mathbf{0},
  \end{cases}
\end{equation}
and
\begin{equation}\label{F}
\ds F (\rho(\bm x, t)):=  \lambda_R  R(\rho(\bm {x}, t))+\lambda_Q  \rho( \bm{x}, t) Q(\bm {x}).
\end{equation}
Since $\bm{m}=\rho \bm{v}$,   the  dynamic cost function $L$ defined in \eqref{OT:L} makes sure that $\bm{m}=\mathbf{0}$ if $\rho=0$.
In \eqref{F}, $\lambda_R$, $\lambda_Q \ge 0$ are fixed parameters,
$R: \mathbb{R}^{+} \rightarrow \mathbb{R}$ is a function to regularize $\rho$, and $Q(\bm{x}): \Omega \rightarrow \overline{\mathbb{R}}$ gives a moving preference for the density $\rho$. Consider $\Omega_0 \subset \Omega$ and define
$Q(\bm{x}):=
\left\{\begin{array}{ll}+\infty, & \bm{x} \in \Omega_0 \\
 0, & \bm{x} \notin \Omega_0
 \end{array}\right.
 $ in \eqref{F}, then the mass will move away from $\Omega_0$ to keep the  interaction cost finite.
 In general, the density $\rho( \bm{x}, t)$ tends to be smaller at the location where $Q(\bm{x})$ is larger and vice versa.

If the interaction cost $ F =0$, the MFP \eqref{Model:MFP} becomes the dynamic formulation of OT problem
\begin{equation}\begin{array}{l}\label{Model:OT}
\ds \min _{(\rho, \bm m) \in \mathcal{C} (\rho_0, \rho_1)} \int_0^T \int_{\Omega} L(\rho( \bm x, t), \bm m(\bm x, t)) d \bm x d t.
\end{array}
\end{equation}
 OT can be considered as a special case of MFP where masses move freely in $\Omega$ through $t \in[0, T]$.

\subsection{A coupled time-fractional PDE system from the optimality condition}

Let $\phi(\bm x,t)$ be a Lagrangian multiplier that is subject to the no-flux boundary condition \vspace{-0.1in}
\begin{equation}\label{phi:BC}
\nabla \phi(\bm x,t) \cdot \bm n(\bm x) = 0 \quad \mathrm{on}~ \p \om \times [0,T].
\end{equation}
We incorporate \eqref{OT:L} to define the generalized Lagrangian as follows:
\begin{equation}\label{Model:L}\begin{array}{rl}
\ds 	\mathcal L(\rho,\bm m,\phi)  \ds := \int_0^T \int_\Omega \f{| \bm m |^2}{2\rho} + F(\rho(\bm x, t))d\bm x dt + \int_0^T  \int_\Omega \phi \big (\p_t^{\alpha} \rho + \nabla \cdot {\bm  m} \big ) d\bm x dt.
\end{array}\end{equation}
Apply the Karush-Kuhn-Tucker approach to reformulate the constrained optimization problem \eqref{Model:MFP} as an unconstrained saddle-point problem: Find $(\rho, \bm m) \in \mathcal C (\rho_0, \rho_1)$ and $\phi$ to optimize the problem \vspace{-0.1in}
\begin{equation}\label{sad_L}
  \inf_{\rho, \bm m} \sup_{\phi}~ 	\mathcal L(\rho,\bm m,\phi).
\end{equation}
We incorporate the optimality condition of problem \eqref{Model:L}--\eqref{sad_L} to compute the first variation of the generalized Lagrangian $	\mathcal L(\rho,\bm m,\phi)$ in terms of all its arguments to arrive at the following proposition.

\begin{prop}\label{MFP}
The optimization problem \eqref{Model:L}--\eqref{sad_L} can be characterized by  the following coupled nonlinear time-fractional PDE system over $\Omega \times [0, T]$, which consists of a forward time-fractional transport equation describing the anomalous transport of the density and a  backward time-fractional Hamilton-Jacobi PDE describing the anomalous transport of the  Lagrangian multiplier  $\phi$:
\begin{equation}\label{Model:KKT}
\left\{\begin{array}{l}
\ds \p_t^{\alpha} \rho + \nabla \cdot {\bm  m} = 0, \\[0.1in]
\ds \hat \p_t^\alpha \phi -\f{|\bm m|^2}{2 \rho^2} + \mathcal \p_\rho F = 0,\\[0.1in]
\ds \f{\bm m}{\rho} =   \nabla \phi
\end{array}\right.
\end{equation}
with   initial,  terminal and boundary conditions
\begin{equation}\begin{array}{cl}\label{Model:KKT:ICBC}
\ds \rho(\bm x, 0) = \rho_0(\bm x), \quad  \rho(\bm x, T) = \rho_1(\bm x), \quad & \ds \mathrm{on} ~ \om,\\[0.05in]
\ds \bm m(\bm x,t) \cdot \bm n(\bm x) = \nabla \phi (\bm x,t) \cdot \bm n(\bm x) =0,  \quad & \ds \mathrm{on} ~ \p \om \times [0,T].
\end{array}
\end{equation}
Here the backward Riemann-Liouville fractional integral operator ${}_t\hat I_T^{1-\alpha}$ and  differential operator $\hat\p_t^{\alpha}$ are defined by \cite{Pod}
\begin{equation}\label{RiemannD}
	\hat\p_t^{\alpha}g:= - \p_t \,\big ( {}_t\hat I_T^{1-\alpha}g), \quad {}_t\hat I_T^{1-\alpha} := \frac{1}{\Gamma(1-\alpha)}\int_t^T\frac{g(s)ds}{(s-t)^{\alpha}}.
\end{equation}
\end{prop}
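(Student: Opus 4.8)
The plan is to characterize the saddle point of \eqref{sad_L} by its first-order optimality (Euler--Lagrange) conditions, obtained by setting the first variation of $\mathcal L$ with respect to each of its three arguments to zero. I would compute the variations $\delta_\phi\mathcal L$, $\delta_{\bm m}\mathcal L$ and $\delta_\rho\mathcal L$ in turn, using admissible perturbations that respect the fixed endpoint data and the boundary conditions.

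The first two variations are routine. Varying $\phi$ retains only the dynamics term $\int_0^T\int_\Omega \phi\,(\p_t^\alpha\rho + \nabla\cdot\bm m)\,d\bm x\,dt$, whose stationarity reproduces the constraint $\p_t^\alpha\rho + \nabla\cdot\bm m = 0$, namely the first line of \eqref{Model:KKT}. Varying $\bm m$ with a perturbation $\delta\bm m$ satisfying $\delta\bm m\cdot\bm n = 0$ on $\p\om$, I would differentiate $|\bm m|^2/(2\rho)$ to get $\bm m/\rho$ and integrate $\int_\Omega \phi\,\nabla\cdot\delta\bm m\,d\bm x = -\int_\Omega \nabla\phi\cdot\delta\bm m\,d\bm x$ by parts in space, where the boundary integral drops because $\delta\bm m\cdot\bm n = 0$. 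Setting the integrand to zero gives the third line $\bm m/\rho = \nabla\phi$.

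The crux is the variation in $\rho$. Differentiating the local terms yields $-|\bm m|^2/(2\rho^2) + \p_\rho F$, and the delicate contribution is $\int_0^T\int_\Omega \phi\,\p_t^\alpha\delta\rho\,d\bm x\,dt$, where the admissible $\delta\rho$ satisfies $\delta\rho(\bm x,0)=\delta\rho(\bm x,T)=0$ since $\rho$ is pinned at both endpoints. I would move the Caputo derivative $\p_t^\alpha = {}_0I_t^{1-\alpha}\p_t$ off $\delta\rho$ and onto $\phi$ through a fractional integration-by-parts identity yielding $\int_0^T \phi\,\p_t^\alpha\delta\rho\,dt = \int_0^T \delta\rho\,\hat\p_t^\alpha\phi\,dt$, with $\hat\p_t^\alpha$ as in \eqref{RiemannD}. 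This splits into two pieces: first the adjointness of the left and right Riemann--Liouville integrals, $\int_0^T \phi\,({}_0I_t^{1-\alpha}\p_t\delta\rho)\,dt = \int_0^T (\p_t\delta\rho)\,({}_t\hat I_T^{1-\alpha}\phi)\,dt$, which follows from Fubini after interchanging the order of the nested time integrals over the triangle $0<s<t<T$; second a standard integration by parts in $t$ moving $\p_t$ from $\delta\rho$ onto ${}_t\hat I_T^{1-\alpha}\phi$, which produces exactly $-\p_t({}_t\hat I_T^{1-\alpha}\phi)=\hat\p_t^\alpha\phi$. Assembling the stationarity condition then gives the second line $\hat\p_t^\alpha\phi - |\bm m|^2/(2\rho^2) + \p_\rho F = 0$.

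The main obstacle I anticipate is controlling the endpoint terms from this last step. The temporal integration by parts generates $[\delta\rho\,({}_t\hat I_T^{1-\alpha}\phi)]_0^T$, which vanishes at $t=T$ because ${}_t\hat I_T^{1-\alpha}\phi$ degenerates to $0$ there and at $t=0$ because $\delta\rho(\bm x,0)=0$. I would justify both the Fubini interchange (the singular kernel $(t-s)^{-\alpha}$ is integrable since $0<\alpha<1$) and the vanishing of these boundary terms under the standing smoothness assumption on $\rho$ and $\bm m$; this is the only analytic subtlety. Once the boundary terms are shown to drop, collecting the three Euler--Lagrange equations together with the inherited conditions \eqref{Model:KKT:ICBC} completes the characterization.
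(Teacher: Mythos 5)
Your proposal is correct and takes essentially the same route as the paper: first-order optimality via variations in $\phi$, $\bm m$, and $\rho$, with the key step being the fractional integration-by-parts identity obtained by Fubini over the triangle $0<s<t<T$ followed by temporal integration by parts, the endpoint terms vanishing because ${}_t\hat I_T^{1-\alpha}\phi$ degenerates to zero at $t=T$ and the admissible perturbation vanishes at $t=0$. The only cosmetic difference is that the paper applies this identity to $\rho$ itself, rewriting the Lagrangian with an explicit $\rho_0\,{}_0\hat I_T^{1-\alpha}\phi$ term before perturbing, whereas you apply it directly to $\delta\rho$; the two are equivalent.
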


\begin{proof}
Let $\delta \phi$ be an admissible function that is subject to the boundary condition \eqref{phi:BC}, we use \eqref{Model:L} to compute the variation $\delta_\phi \mathcal L(\rho,\bm m,\phi)$ with respect to $\phi$
\begin{equation*}\label{Lphi:e1}\begin{array}{rl}
	\ds J_\phi(\theta) & \hspace{-0.1in}\ds := \mathcal L(\rho, \bm m, \phi + \theta \delta \phi) \\[0.1in]
	&  \hspace{-0.06in} \ds \ds = \int_0^T \int_\om \f{| \bm m |^2}{2\rho}  +  F(\rho(\bm x, t)) d\bm x dt + \int_0^T  \int_\om (\phi + \theta \delta \phi) \big (\p_t^{\alpha} \rho + \nabla \cdot {\bm  m} \big ) d\bm x dt.
\end{array}\end{equation*}
Differentiate $J_\phi(\theta)$ with respect to $\theta$ and then set $\theta = 0$ to obtain
\begin{equation*}\label{Lphi:e2}\begin{array}{l}
\ds	\delta_\phi \mathcal L(\rho,\bm m,\phi) = J_\phi'(0)  = \int_0^T \int_\om \delta \phi \big (\p_t^{\alpha} \rho + \nabla \cdot {\bm  m} \big ) d\bm x dt= 0.
\end{array}\end{equation*}
Since $\delta \phi$ is arbitrary, we obtain the time-fractional PDE for   the density $\rho$
\begin{equation}\label{Lphi:e3}
\ds \p_t^{\alpha} \rho + \nabla \cdot {\bm  m} = 0 \quad \mathrm{in} ~\Omega \times (0,T].
\end{equation}

To compute the variation $\delta_\rho \mathcal L(\rho,\bm m,\phi)$ with respect to $\rho$, we use the forward differential operator $\p_t^\alpha$ in \eqref{frac} and the backward differential operator $\hat\p_t^{\alpha}$ in \eqref{RiemannD} to integrate the $\phi\, \p_t^{\alpha} \rho$ term in the last integral on the right-hand side of \eqref{Model:L} to find
\begin{equation}\label{Lrho:e1}\begin{array}{cl}
	\ds &\ds  \int_\om \int_0^T \phi(\bm x,t) \, \p_t^{\alpha} \rho(\bm x,t) dt d\bm x \\[0.1in]
	\ds&\ds = \f1{\Gamma(1-\alpha)} \int_\om \bigg [\int_0^T \phi(\bm x,t) \int_0^t \f{\p_s \rho(\bm x,s)ds}{(t-s)^{\alpha}} dt \bigg ] d\bm x \\ [0.2in]
	\ds &\ds  = \f1{\Gamma(1-\alpha)} \int_\om \int_0^T \bigg [ \int_s^T
	\f{\phi(\bm x,t) \p_s \rho(\bm x,s)}{(t-s)^{\alpha}} dt \bigg] ds d\bm x\\[0.15in]
	\ds &\ds  = \int_\om \int_0^T \bigg [\p_s \rho(\bm x,s) \f1{\Gamma(1-\alpha)}\int_s^T \f{\phi(\bm x,t) }{(t-s)^{\alpha}} dt\bigg] ds d\bm x \\[0.15in]
	\ds &\ds  = \int_\om \int_0^T \rho(\bm x,s) \hat \p_s^\alpha \phi(\bm x,\cdot) ds d\bm x
	- \int_\om \rho_0(\bm x){}_0\hat I_T^{1-\alpha} \phi(\bm x,\cdot) d\bm x.
\end{array}\end{equation}

Integrate the remaining terms in the last integral of \eqref{Model:L} by parts and use \eqref{Lrho:e1} and the space-time boundary conditions in \eqref{Model:set} to find
\begin{equation}\label{Lrho:e3}\begin{array}{l}
	\ds \int_0^T  \int_\om \phi \big ( \p_t^{\alpha} \rho  + \nabla \cdot \bm m \big ) d\bm x dt = \int_0^T \int_\om  \rho \hat \p_t^\alpha \phi  - \bm m \cdot \nabla \phi \,d\bm x dt \\[0.125in]
\ds \qquad \qquad \qquad \qquad \qquad \qquad \qquad	- \int_\om \rho_0(\bm x){}_0\hat I_T^{1-\alpha} \phi(\bm x,\cdot) d\bm x.
\end{array}\end{equation}
Let $\delta \rho$ be any admissible function with the homogeneous initial condition
$\delta \rho(\bm x,0) = 0$ on $\om$. 
Combine \eqref{Model:L} and \eqref{Lrho:e3} and utilize  the above initial condition to obtain
\begin{equation*}\begin{array}{rl}
\hspace{-0.13in}	J_\rho(\theta) &\hspace{-0.13in} \ds := \mathcal L(\rho + \theta \delta \rho, \bm m, \phi) \\[0.1in]
	\hspace{-0.13in}& \hspace{-0.1in} \ds = \int_0^T \int_\om \f{| \bm m |^2}{2(\rho + \theta \delta \rho)} \! +\!  F(\rho  + \theta \delta \rho)d\bm x dt  + \int_0^T  \int_\om \phi \big ( \p_t^{\alpha} \rho  + \nabla \cdot \bm m \big ) \Big |_{\rho = \rho + \theta \delta \rho} d\bm x dt \\[0.15in]
\hspace{-0.13in}	& \ds \hspace{-0.1in} =  \int_0^T \int_\om \f{| \bm m |^2}{2(\rho + \theta \delta \rho)} \!+\!  F(\rho  + \theta \delta \rho) d\bm x dt  + \int_0^T \int_\om \Big [ (\rho +\theta \delta \rho)  \hat \p_t^\alpha  \phi  - \bm m \cdot \nabla \phi \Big ] d\bm x dt\\[0.15in]
\hspace{-0.13in}	& \ds - \int_\om \rho_0(\bm x) {}_0\hat I_T^{1-\alpha} \phi(\bm x,\cdot) d\bm x.
\end{array}\end{equation*}
Differentiate $J_\rho(\theta)$ with respect to $\theta$ yields
\begin{equation*}\begin{array}{rl}
	J_\rho'(\theta) &\hspace{-0.1in} \ds = \int_0^T \int_\om \delta \rho \bigg[- \f{| \bm m |^2 }{2(\rho + \theta \delta \rho)^2} +  \mathcal \p_\rho F \big|_{\rho = \rho + \theta \delta \rho } + \hat \p_t^\alpha  \phi \bigg]d\bm x dt.
\end{array}\end{equation*}
Then set $\theta = 0$ gives rise to \vspace{-0.05in}
\begin{equation*}\label{Lrho:e6}\begin{array}{rl}
	\ds \delta_\rho \mathcal L(\rho,\bm m,\phi) & \hspace{-0.1in} \ds = J_\rho'(0) = \int_0^T \int_\om \delta \rho \Big [ \hat \p_t^\alpha \phi -\f{| \bm m |^2}{2\rho^2} + \mathcal \p_\rho F  \Big ] d\bm x dt = 0.
\end{array}\end{equation*}
The arbitrary $\delta \rho$ yields the backward time-fractional Hamilton-Jacobi PDE for $\phi$\vspace{-0.02in}
\begin{equation}\label{Lrho:e7}
\hat \p_t^\alpha \phi -\f{| \bm m |^2}{2\rho^2} + \p_\rho  F =0, ~~~\mathrm{in} ~ \om \times [0,T].
\end{equation}

Finally, let $\delta \bm m$ be any admissible function satisfying the no-flow boundary condition in \eqref{Model:set}. Use \eqref{Model:L}, \eqref{Lrho:e3}, and the boundary condition \eqref{Model:set} for $\delta \bm m$ to deduce
\begin{equation*}\begin{array}{rl}
	J_{\bm m}(\theta) &\hspace{-0.1in} \ds := \mathcal L(\rho, \bm m + \theta \delta \bm m, \phi) \\[0.1in]
	& \hspace{-0.05in} \ds = \int_0^T \int_\om \f{| \bm m + \theta \delta \bm m|^2}{2\rho} +   F(\rho) d\bm x dt  + \int_0^T \int_\om \phi  \p_t^\alpha \rho  - (\bm m + \theta \delta \bm m) \cdot \nabla \phi  d\bm x dt.
\end{array}\end{equation*}
Differentiate $J_{\bm m}(\theta)$ with respect to $\theta$ and then set $\theta = 0$ to obtain
\begin{equation}\label{Lm:e2}
\delta_{\bm m} \mathcal L(\rho,\bm m,\phi) = J_{\bm m}'(0)
= \int_0^T \int_\om \f{{\bm m} \cdot \delta {\bm m}}{\rho} - \delta {\bm m} \cdot \nabla \phi d\bm x dt= 0.
\end{equation}
Since $\delta {\bm m}$ is arbitrary, we use  \eqref{Lm:e2} to derive
\begin{equation}\label{Lm:e3}
\f{\bm m}{\rho} = \nabla \phi,
\end{equation}
which completes the proof.
\end{proof}

\section{Discretization scheme}\label{Disc:MFP}
We develop  a numerical approximation to the  generalized Lagrangian \eqref{Model:L}, based on which  we further derive a discrete system  approximating the coupled  nonlinear system \eqref{Model:KKT}  on  the first-order optimality condition.

\subsection{Discretization both in space and time}
We consider the case of $\Omega \subseteq \mathbb R^2$ and $\Omega$ is square for the sake of simplicity.   Let $\Omega \times [0, T]=[0,1] ^3$ and  $\mathbf{m}:=\left(m_1, m_2\right)$. The  boundary conditions for $\bm m$ and $\phi$ in \eqref{Model:set} and \eqref{phi:BC}  imply that
\begin{equation}\label{2d:bd}
\hspace{-0.125in} \ds m_1(x, y, t)|_{x=0, 1}\! =\! m_2(x, y, t)|_{y=0, 1} =0, \, \p_x \phi(x, y, t)|_{x=0, 1} \!=\! \p_y \phi(x, y, t)|_{y=0, 1} =0.
\end{equation}
Let $\Delta x : = 1/N_x$, $\Delta y : = 1/N_y$,  and $\Delta t : ={1}/{N_t}$ be the sizes of spatial grids and time step, with $N_x$, $N_y$, and $N_t$ being positive integers. Let $\Delta V : =\Delta x \Delta y \Delta t$ for notational simplicity.
We define a spatial and temporal partition $x_i=(i-\frac{1}{2}) \Delta x$ for $1 \le i \le N_x$, $y_j=(j-\frac{1}{2}) \Delta y$ for $ 1 \le j \le N_y$, and $t_n= n \Delta t$ for $0 \le n \le N_t$. We further denote $x_{i+\f{1}{2}} = i \Delta x $ for $0 \le i \le N_x$ and $y_{j+\f{1}{2}}=  j \Delta y$ for $0 \le j \le N_y$.

We define  $\mathcal{G}_\rho$, $\mathcal{G}_{m_1}$, $\mathcal{G}_{m_2}$, and $\mathcal{G}_\phi$ as the sets
of grid point indices with $\mathcal{G}_{m_1}$ and $\mathcal{G}_{m_2}$ on  $x$- and $y$- staggered grids, respectively.
Here
$\mathcal{G}_\rho:=\{(i, j, n): \, 1 \le i \le N_x, \, 1 \le j \le N_y$, $ 0 \le n \le N_t\}$, $\mathcal{G}_{m_1}:=\{(i-\frac{1}{2}, j, n):  ~1 \le i \le N_x+1, ~1 \le j \le N_y$, $ ~ 1 \le n \le N_t\}$, $\mathcal{G}_{m_2}:=\{(i, j-\frac{1}{2}, n): ~1 \le i \le N_x, ~ 1 \le j \le N_y+1, ~ 1 \le n \le N_t\}$, and $\mathcal{G}_{\phi}:=\{(i, j, n): ~ 1 \le i \le N_x, ~ 1 \le j \le N_y, ~ 1 \le n \le N_t\}$.
Let $\rho_{i, j, n} := \rho(x_i, y_j, t_n)$, $[m_1]_{i-\f{1}{2},j,n} : = m_1(x_{i-\f{1}{2}},y_j,t_n)$, $[m_2]_{i,j-\f{1}{2},n}:=m_2(x_{i},y_{j-\f{1}{2}},t_n)$, and $\phi_{i, j, n} := \phi(x_i, y_j, t_n)$. Then we approximate  $\rho$, $m_1$,  $m_2$  and  $\phi$ by $P_{\mathcal{G}_\rho}, M_{\mathcal{G}_{m_1}}^x$,  $M_{\mathcal{G}_{m_2}}^y$ and $\Phi_{\mathcal{G}_\phi}$, respectively.
Figure \ref{mesh} illustrates the staggered grid and the corresponding spatial grid points for $\bm P_n$, $\bm M^x_n$, $\bm M^y_n$,  and $\bm \Phi_n$ defined below \eqref{Array:B} with $N_x =5$ and  $N_y=4$.
\begin{figure}[!htbp]
\setlength{\abovecaptionskip}{0pt}
\vspace{-0.15in}
\centering
\includegraphics[scale=0.3]{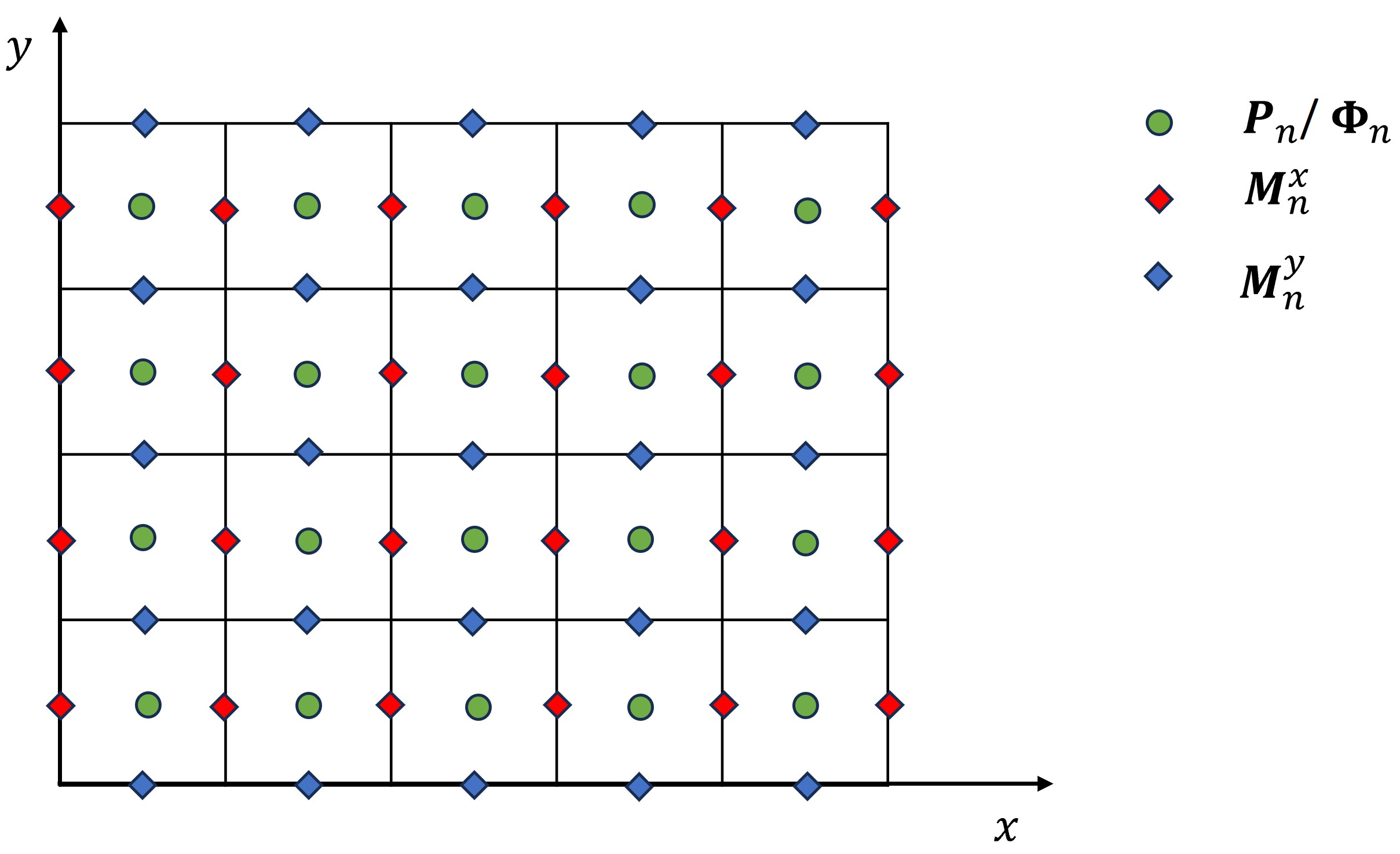}
\caption{Illustration of spatial grid points for $\bm P_n$, $\bm M^x_n$,  $\bm M^y_n$, and $\bm \Phi_n$.  }
\label{mesh}
\end{figure}
\paragraph{Discretization of the forward time-fractional transport PDE in \eqref{Model:KKT}} \vspace{-0.1in}
We employ  the $L1$ discretization to approximate $\p_t^{\alpha} \rho(x_i, y_j,t_n)$ for $1 \le i \le N_x$, $1 \le j \le N_y$, and $1 \le n \le N_t$ \cite{LinXu,LiuShen,SunWu}. We have \vspace{-0.05in}
\begin{equation}\label{Temp:e1}\begin{array}{rl}
	\hspace{-0.15in}\ds \p_t \rho(x_i, y_j,t_n) & \hspace{-0.1in} \ds \approx \delta_{t} \rho_{i,j,n}:= \frac{\rho_{i,j,n}
- \rho_{i,j,n-1}}{\Delta t} , \\[0.1in]
	\hspace{-0.15in}\ds \p_t^{\alpha} \rho(x_i, y_j,t_n) &\hspace{-0.1in} \ds =  \sum_{k=1}^n \int_{t_{k-1}}^{t_k}
\f{\p_s \rho(x_i, y_j, s)ds}{\Gamma(1-\alpha)(t_n-s)^{\alpha}}
\approx \sum_{k=1}^n \int_{t_{k-1}}^{t_k} \f{\delta_{t}\rho_{i,j,k}~ds}{\Gamma(1-\alpha)(t_n-s)^{\alpha}}  \\[0.15in]
	&\hspace{-0.1in} \ds = \sum_{k=1}^n  b_{n,k} \big [\rho_{i,j,k}-\rho_{i,j,k-1} \big ] \\[-0.05in]
	&\hspace{-0.1in} \ds  = b_{n,n}\rho_{i,j,n} + \sum_{k=1}^{n-1}(b_{n,k}-b_{n,k+1})\rho_{i,j,k} -b_{n,1}\rho_{i,j,0} =:  \delta_{t}^{\alpha} \rho_{i,j,n}, \\[0.17in]
	b_{n,k} &\hspace{-0.1in} \ds := \f{(t_n - t_{k-1})^{1-\alpha} - (t_n - t_k)^{1-\alpha}}{\Gamma(2-\alpha) \Delta t}, \quad 1 \le k \le n.
\end{array}\end{equation}

We similarly use the finite difference scheme to discretize $\p_x m_1(x_{i}, y_{j}, t_n)$ and $\p_y m_2(x_{i}, y_{j}, t_n )$  for $ 1 \le i \le N_x$, $1 \le j \le N_y$ and $1 \le n \le N_t$ \vspace{-0.05in}
\begin{equation}\label{Space:e1}\begin{array}{rl}
\hspace{-0.1in}\ds \p_x  m_1(x_{i}, y_{j}, t_n)  \approx  \delta_{x} [m_1]_{i,j,n} : = \f{  [m_1]_{i+\f{1}{2},j,n} - [m_1]_{i-\f{1}{2},j,n}}{\Delta x},\\[0.075in]
\hspace{-0.1in} \ds \p_y m_2(x_{i}, y_{j}, t_n)  \approx  \delta_{y} [m_2]_{i,j,n} := \f{  [m_2]_{{i},{j+\f{1}{2}}, n } - [m_2]_{{i},j-\f{1}{2}, n }}{\Delta y}.
\end{array}\end{equation}

Substitute   $\delta_{t}^{\alpha} \rho_{i, j, n}$, $\delta_x [m_1]_{i,j, n}$ and $\delta_y [m_2]_{i,j, n}$  for $\p_t^{\alpha} \rho(x_i, y_j, t_n)$, $\p_x m_1(x_i, y_j, t_n)$ and $\p_y m_2(x_i, y_j, t_n)$ in \eqref{Model:KKT} to arrive at a numerical approximation to the time-fractional transport equation in \eqref{Model:KKT}
\begin{equation}\label{Model:Dis}\begin{array}{l}
\ds  \delta_{t}^{\alpha} P_{i,j, n} + \delta_x M^x_{i,j, n} +  \delta_y M^y_{i,j, n} =0
\end{array}\end{equation}
for $  1 \le i \le N_x$, $ 1 \le j \le N_y$, and  $n= 1, 2, \cdots,  N_t$. We are now in the position to reformulate the derived discretization scheme \eqref{Model:Dis} into the matrix-vector multiplication form for future use.
Let $\bm P$ be a $(N_t+1)$-dimensional vector  and $\bm M^x$, $\bm M^y$, and  $\bm \Phi$ be $N_t$-dimensional vectors expressed in block form as follows:
\begin{equation}\label{Array:B}\begin{array}{l}
\bm P = \big[\bm P_0, \bm P_1, \cdots, \bm P_{N_t}]^\top, \quad \bm M^x=\big[ \bm  M^{x}_1,  \bm  M^{x}_2, \cdots, \bm  M^{x}_{N_t} \big]^\top,
\\[0.05in]
 \bm M^y=\big[ \bm  M^{y}_1, \bm  M^{y}_2, \cdots, \bm  M^{y}_{N_t} \big]^\top, \quad
\bm \Phi = \big[\bm  \Phi_1, \bm \Phi_2, \cdots,\bm  \Phi_{N_t}]^\top,
\end{array}
\end{equation}
where $\bm P_n$ and $\bm M^{x}_n$ are the $N_xN_y$-dimensional and $(N_x-1)N_y$-dimensional vectors, respectively,
\begin{equation*}\label{Array:S}{\footnotesize
 \begin{array}{l}
  \bm P_n := \big[P_{1,1,n}, \cdots, P_{N_x,1,n}, P_{1,2,n}, \cdots, P_{N_x,2,n}, \cdots, P_{1,N_y,n},\cdots, P_{N_x, N_y,n}\big], \,\, 0 \le n  \le N_t, \\[0.1in]
  \bm M^x_n := \big[M^x_{\f{3}{2},1,n}, \cdots, M^x_{N_x-\f{1}{2},1,n}, \cdots, M^x_{\f{3}{2},N_y,n},\cdots, M^x_{N_x-\f{1}{2}, N_y,n}\big], \quad 1 \le n  \le N_t,
 \end{array}}
 \end{equation*}
and  $\bm M^{y}_n$, and $\bm \Phi_{n}$  for $1 \le n  \le N_t$ could be defined accordingly.
Employ \eqref{Array:B} to define the $\tilde N $-dimensional vector $\bm U :=[\bm P^\top, {\bm M^x}^\top, {\bm M^y}^\top]^\top$ with
$\tilde N := N_x N_y (N_t +1)  + (N_x-1)N_y N_t + N_x (N_y-1) N_t,$
then the finite difference scheme \eqref{Model:Dis} with $\delta_t^\alpha$, $\delta_ x$, and $\delta_y$ defined in \eqref{Temp:e1}--\eqref{Space:e1} could be expressed as $\bm K  \bm U = \bm 0$ with the $(N_x N_y N_t)$-by-$\tilde N$ matrix $\bm K$ defined by
\begin{equation}\begin{array}{l}\label{Model:M}
\hspace{-0.175in}  \ds \bm K = \big[ \bm A \otimes \bm I_{N_x,N_x} \otimes \bm I_{N_y,N_y} ,\bm I_{N_t \times N_t} \otimes \bm I_{N_y \times N_y} \otimes \bm C^x , \bm I_{N_t \times N_t} \otimes \bm C^y  \otimes  \bm I_{N_x \times N_x}\big],
  \end{array}
\end{equation}
 where   $\otimes$ represents Kronecker product, $\bm I_{n,n}$ refers to the $n$-by-$n$ identity matrix, and $\bm A := \{a_{ij}\}_{i,j=1}^{N_t,N_t+1}$, $\bm C^x := \{c^x_{ij}\}_{i,j=1}^{N_x,N_x-1}$ are defined as follows
\begin{equation}\begin{array}{l}\label{M_A}
a_{ij}=\left\{
\begin{array}{ll}
-b_{i,1}, & j=1,\\[0.025in]
b_{i,j}-b_{i,j+1}, & 2 \le j \le i,\\[0.025in]
b_{i,i}, &  j = i+1,\\[0.025in]
0,& otherwise,
\end{array}
\right.
\end{array}
c_{ij}^x=\left\{
\begin{array}{ll}
\frac{1}{\Delta x}, & j=i,\\[0.025in]
-\frac{1}{\Delta x}, &  j=i-1 ,\\[0.025in]
0,& otherwise,
\end{array}
\right.
\end{equation}
and $\bm C^y = \{c^y_{ij}\}_{i,j=1}^{N_y,N_y-1}$ could be defined in an analogous manner as $\bm C^x$ in \eqref{M_A} with $\Delta x$ replaced by $\Delta y$.

Motivated by the integration  by parts in \eqref{Lrho:e1}--\eqref{Lrho:e3}, i.e., \vspace{-0.05in}
\begin{equation}\label{Int_adj}\begin{array}{l}
\ds \int_0^T  \int_\om \phi \p_t^{\alpha} \rho \,  d\bm x dt = \int_0^T \int_\om  \rho \hat \p_t^\alpha \phi \, d\bm x dt  	- \int_\om \rho_0(\bm x){}_0\hat I_T^{1-\alpha} \phi(\bm x,\cdot) d\bm x,\\[0.1in]
\ds \int_0^T  \int_\om \phi\, \nabla \cdot \bm m \, d\bm x dt = -\int_0^T \int_\om    \bm m \cdot \nabla \phi \,d\bm x dt,
\end{array}\end{equation}
we turn to discretize the operators $\hat\p_t^{\alpha} \phi (x_i, y_j, t_n)$ defined in \eqref{RiemannD} and $\nabla \cdot \phi(x_i, y_j, t_n)$ for $1 \le i \le N_x$, $1 \le j \le N_y$, and $1 \le n \le N_t$.
 We will prove in Proposition \ref{lem:adj:opr} that the discretizations in \eqref{BFEM:e1} and \eqref{Space:e2} could be deduced by those in \eqref{Temp:e1} and \eqref{Space:e1} such that the dual properties in the continuous problem in \eqref{Int_adj} still hold on the discrete level, which will be used subsequently  to derive the discrete system in Proposition \ref{discre:KKT:MFP}.
\paragraph{Discretization of the adjoint operators}
Similarly discretize $\hat\p_t^{\alpha}\phi(x_i, y_j ,t_n)$ backward in time  for $n=N_t,\ldots,1$ \vspace{-0.05in}
\begin{equation}\label{BFEM:e1}\begin{array}{l}
	\hspace{-0.135in}\ds \p_t \phi(x_i, y_j, t_{n}) \approx \delta_{t}\phi_{i,j,n},\\[0.1in]
	\hspace{-0.135in}\ds  \hat\p_t^{\alpha}\phi(x_i, y_j, t_{n}) \!=\!- \p_t \big ( {}_{t} \hat I_T^{1-\alpha}\phi \big )(x_i, y_j, t_n)
	\!\approx \! -\frac{1}{\Delta t} \bigg [{}_{t_n}\hat I_T^{1-\alpha}\phi- {}_{t_{n-1}}\hat I_T^{1-\alpha} \phi  \bigg ] (x_i, y_j) \\[0.125in]
	\hspace{-0.135in}\ds \qquad=\frac{1}{\Delta t}  \frac{1}{\Gamma(1-\alpha)}\left [ \sum_{k=n}^{N_t}\int_{t_{k-1}}^{t_k}\frac{\phi(x_i, y_j ,s)ds}{(s-t_{n-1})^{\alpha}}-\sum_{k=n}^{N_t-1}\int_{t_{k}}^{t_{k+1}}\frac{\phi(x_i, y_j, s) ds}{(s-t_{n})^{\alpha}} \right ]\\[0.175in]
	\hspace{-0.135in}\ds \qquad \approx \frac{1}{\Delta t}  \frac{1}{\Gamma(1-\alpha)}\left [\sum_{k=n}^{N_t}\int_{t_{k-1}}^{t_k}\frac{\phi(x_i, y_j ,t_{k})ds}{(s-t_{n-1})^{\alpha}}-\sum_{k=n}^{N_t-1}\int_{t_{k}}^{t_{k+1}}\frac{\phi(x_i, y_j,t_{k+1}) ds}{(s-t_{n})^{\alpha}} \right ]\\[0.15in]
	\hspace{-0.135in}\ds \qquad = b_{n,n} \phi_{i,j,n}+ \sum_{k=n+1}^{N_t}(b_{k,n}-b_{k,n+1}) \phi_{i,j,k}
	=: \! \hat \delta^{\alpha}_t \phi_{i,j,n},\\[0.18in]
	\hspace{-0.135in}\ds b_{k,n} := \f{(t_k - t_{n-1})^{1-\alpha} - (t_{k-1} - t_{n-1})^{1-\alpha}}{\Gamma(2-\alpha) \Delta t}, \quad n \le k \le N_t.
\end{array}\end{equation}
Here $\delta_{t} \phi_{i,j,n}$ is defined in \eqref{Temp:e1}.

We finally discrete $\p_x \phi(x_{i}, y_j, t_n )$ and $\p_y \phi(x_i, y_{j}, t_n )$ for  $1 \le n \le N_t$ \vspace{-0.08in}
\begin{equation}\label{Space:e2}\begin{array}{rl}
\hspace{-0.2in} \ds \p_x  \phi(x_{i}, y_j, t_n )  \approx  \hat \delta_{x} \phi _{i,j,n} := \f{ \phi_{i+1, j,n} - \phi_{i, j,n} }{\Delta x}, \; 1 \le i \le N_x-1, \, 1 \le j \le N_y,\\[0.1in]
\ds \p_y  \phi(x_i, y_j, t_n )  \approx  \hat \delta_{y} \phi _{i,j,n} := \f{ \phi_{i,j+1,n} - \phi_{i,j,n}}{\Delta y}, \; 1 \le i \le N_x, \, 1 \le j \le N_y-1.
\end{array}\end{equation}
\subsection{Discretization of the generalized Lagrangian \eqref{Model:L}}
With the notations and discretization schemes \eqref{Temp:e1}--\eqref{Space:e1} in hand, we are now in the position to discretize the generalized Lagrangian  $\mathcal L(\rho,\bm m,\phi)$ in \eqref{Model:L}. Since OT problem \eqref{Model:OT} is a simplified version of the MFP problem with $ F =0$ in \eqref{Model:MFP} and \eqref{Model:L}, we mainly focus on the derivations of the numerical discretization scheme to the MFP problem for illustration.  The numerical approximation to MFP  reads as follows: find $\bm P$, $\bm M^x$, $\bm M^y$, and $\bm \Phi$ defined in \eqref{Array:B}, such that
\begin{equation}\label{sad_DL}
  \inf_{\bm P, \bm M^x, \bm M^y } \sup_{\bm \Phi}~ \breve{ \mathcal L}(\bm P, \bm M^x, \bm M^y, \bm \Phi),
\end{equation}
where $\breve{ \mathcal L}$ is the numerical approximation to $\mathcal L(\rho,\bm m,\phi)$ in \eqref{Model:L} defined by
\begin{equation}\label{Model:DL}\begin{array}{l}
\hspace{-0.2in}\ds	\breve{\mathcal L}\ds := \Delta V \sum_{n=1}^{N_t} \sum_{i=1}^{N_x}\sum_{j=1}^{N_y} \bigg[{\breve L}_{i,j,n} \!+\!  F(P_{i, j, n})\!+\! \Phi_{i, j, n} \big ( \delta_{t}^{\alpha} P_{i,j, n}\! +\! \delta_x M^x_{i,j, n} \!+\! \delta_y M^y_{i,j, n} \big)\bigg]
\end{array}\end{equation}
with \vspace{-0.15in}
\begin{equation}\label{D:F}
  \ds {\breve L}_{i,j,n} : = \f{ \big(M_{i+\f{1}{2}, j, n}^{x}\big)^2}{P_{i,j, n} + P_{i+1, j, n}} + \f{\big( M_{i, j+\f{1}{2}, n}^{y}\big)^2}{P_{i,j, n} + P_{i, j+1, n}}.
\end{equation}

Similar to its continuous analogue, we employ the optimality condition of the problem \eqref{sad_DL}--\eqref{Model:DL} to compute the variation of  $\breve {\mathcal L}(\bm P, \bm M^x, \bm M^y, \bm \Phi)$ with respect to  all its arguments. To aim at this goal, we first prove that the adjoint properties of the continuous problem \eqref{Int_adj} is preserved on the discrete level in the following proposition.

\begin{prop}\label{lem:adj:opr}
For $ 1 \le i \le N_x$, $1 \le j \le N_y$ and $1 \le n \le N_t$,  $\delta^{\alpha}_t$ in \eqref{Temp:e1} and  $\hat \delta^{\alpha}_t$ in \eqref{BFEM:e1} satisfy
\begin{equation}\label{leftright:t}\begin{array}{l}
\hspace{-0.2in}\ds \sum_{n=1}^{N_t} \delta^{\alpha}_t P_{i,j,n}\,\Phi_{i,j,n} + P_{i,j,0}  \sum_{n=1}^{N_t}  b_{n,1}\,\Phi_{i,j,n} = \sum_{n=1}^{N_t}  P_{i,j,n} \,\hat\delta^{\alpha}_t \Phi_{i,j,n}.
\end{array}
\end{equation}
Similarly,  $\delta_x $ and $\delta_y $ in \eqref{Space:e1}, and  $\hat \delta_x $ and $\hat \delta_y $ in \eqref{Space:e2} satisfy
\begin{equation}\label{leftright:x}\begin{array}{l}
\hspace{-0.15in} \ds \sum_{i=1}^{N_x} \sum_{j=1}^{N_y} \delta_x M^x_{i,j, n} \Phi_{i,j,n} = -\sum_{i=1}^{N_x} \sum_{j=1}^{N_y}  M^x_{i+\f{1}{2},j, n} \hat \delta_{x} \Phi_{i,j,n},\\[0.1in]
\hspace{-0.15in} \ds \sum_{i=1}^{N_x} \sum_{j=1}^{N_y} \delta_y M^y_{i,j, n} \Phi_{i,j,n}  = -\sum_{i=1}^{N_x} \sum_{j=1}^{N_y}  M^y_{i,j+\f{1}{2}, n} \hat \delta_{y} \Phi_{i,j,n}.
\end{array}  \end{equation}
\end{prop}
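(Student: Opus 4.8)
The two groups of identities are established independently: the spatial relations \eqref{leftright:x} are a standard discrete integration by parts, whereas the temporal relation \eqref{leftright:t} is the genuinely nonlocal statement and is the discrete counterpart of the fractional integration by parts \eqref{Lrho:e1}--\eqref{Lrho:e3}. The first thing I would record is a compatibility observation underlying everything: the coefficients $b_{n,k}$ entering the forward operator in \eqref{Temp:e1} (defined for $k\le n$) and the coefficients $b_{k,n}$ entering the backward operator in \eqref{BFEM:e1} (defined for $k\ge n$) form one and the same triangular array. On the uniform grid one has $t_k-t_n=t_{k-1}-t_{n-1}=(k-n)\Delta t$, so the two displayed formulas for $b$ reduce to the same function of the index gap. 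This shared array is what allows the forward and backward sums to recombine.

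For \eqref{leftright:t} I would start from the collected form of the forward operator, namely the last line of \eqref{Temp:e1},
\[
\delta_t^\alpha P_{i,j,n}=b_{n,n}P_{i,j,n}+\sum_{k=1}^{n-1}(b_{n,k}-b_{n,k+1})P_{i,j,k}-b_{n,1}P_{i,j,0}.
\]
Multiplying by $\Phi_{i,j,n}$ and summing over $n$ from $1$ to $N_t$, the last term contributes exactly $-P_{i,j,0}\sum_{n=1}^{N_t}b_{n,1}\Phi_{i,j,n}$, which cancels the second term on the left of \eqref{leftright:t}. What remains is $\sum_n b_{n,n}P_n\Phi_n$ plus the double sum $\sum_{n=1}^{N_t}\sum_{k=1}^{n-1}(b_{n,k}-b_{n,k+1})P_k\Phi_n$ over the triangle $1\le k<n\le N_t$ (suppressing the $i,j$ subscripts). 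I would then swap the order of summation, rewriting the triangle as $k+1\le n\le N_t$, $1\le k\le N_t-1$, and relabel the two summation indices. Invoking the coefficient compatibility above, the inner sum becomes $\sum_{k=n+1}^{N_t}(b_{k,n}-b_{k,n+1})\Phi_k$, and together with $b_{n,n}\Phi_n$ this is precisely $\hat\delta_t^\alpha\Phi_n$ as defined in \eqref{BFEM:e1}. Hence the remaining expression equals $\sum_n P_n\,\hat\delta_t^\alpha\Phi_n$, the right-hand side of \eqref{leftright:t}.

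For the spatial identities \eqref{leftright:x} I would fix $j$ and $n$ and sum the telescoping difference $\delta_x M^x_{i,j,n}=(M^x_{i+\f{1}{2},j,n}-M^x_{i-\f{1}{2},j,n})/\Delta x$ against $\Phi_{i,j,n}$ over $1\le i\le N_x$ by discrete summation by parts. The two endpoint terms involve the boundary fluxes $M^x_{\f{1}{2},j,n}$ and $M^x_{N_x+\f{1}{2},j,n}$, which vanish by the no-flow boundary condition \eqref{2d:bd} (these are exactly the flux values excluded from $\bm M^x_n$ in \eqref{Array:S}). The interior remainder equals $-\sum_{i=1}^{N_x-1}M^x_{i+\f{1}{2},j,n}(\Phi_{i+1,j,n}-\Phi_{i,j,n})/\Delta x=-\sum_i M^x_{i+\f{1}{2},j,n}\,\hat\delta_x\Phi_{i,j,n}$, using the definition of $\hat\delta_x$ in \eqref{Space:e2}; summing over $j$ gives the first line of \eqref{leftright:x}, and the second line follows by the identical argument in $y$.

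The only delicate point is the bookkeeping in the temporal identity: confirming that the forward and backward coefficient arrays coincide, and executing the triangular change of summation order and index relabeling so that the recombined inner sum matches the definition of $\hat\delta_t^\alpha$ term by term. Once the coefficient compatibility is secured, the rest is routine rearrangement, and the cancellation of the $P_{i,j,0}$ boundary term is the discrete counterpart of the initial-data term $\int_\om\rho_0\,{}_0\hat I_T^{1-\alpha}\phi\,d\bm x$ in \eqref{Lrho:e1}.
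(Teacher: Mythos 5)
Your proof is correct and follows essentially the same route as the paper's: cancel the $P_{i,j,0}$ term against the initial-data correction, exchange the order of summation over the triangle $1\le k<n\le N_t$, relabel indices to recover $\hat\delta_t^\alpha\Phi_{i,j,n}$, and handle the spatial identities by telescoping summation by parts with the no-flow boundary values. Your explicit check that the forward coefficients $b_{n,k}$ of \eqref{Temp:e1} and the backward coefficients $b_{k,n}$ of \eqref{BFEM:e1} agree as functions of the index gap on the uniform grid is a point the paper leaves implicit, and making it explicit strengthens rather than changes the argument.
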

\begin{proof}
Use   \eqref{Temp:e1} for $\delta^{\alpha}_t$ and  \eqref{BFEM:e1} for $\hat \delta^{\alpha}_t$ to obtain
\begin{equation}\label{leftright:e1}\begin{array}{cl}
&\hspace{-0.1in} \ds \sum_{n=1}^{N_t}  \delta^{\alpha}_t P_{i,j,n}\,\Phi_{i,j,n} + P_{i,j,0} \sum_{n=1}^{N_t}  b_{n,1} \,\Phi_{i,j,n} \\[0.1in]
&\hspace{-0.1in} \ds  =\sum_{n=1}^{N_t}  b_{n,n}P_{i,j,n}\,\Phi_{i,j,n}+\sum_{n=1}^{N_t}  \sum_{k=1}^{n-1}(b_{n,k}-b_{n,k+1})P_{i,j,k}\,\Phi_{i,j,n} \\[0.15in]
&\hspace{-0.1in} \ds =\sum_{n=1}^{N_t}  P_{i,j,n}\, b_{n,n}\Phi_{i,j,n}+\sum_{k=1}^{{N_t} -1} P_{i,j,k} \sum_{n=k+1}^{{N_t} }(b_{n,k}-b_{n,k+1})\Phi_{i,j,n}  \\[0.15in]
&\hspace{-0.1in} \ds =\sum_{n=1}^{N_t}  P_{i,j,n} \Big( b_{n,n}\Phi_{i,j,n} +\sum_{k=n+1}^{N_t} (b_{k,n}-b_{k,n+1})\Phi_{i,j,k}  \Big)  = \sum_{n=1}^{N_t}  P_{i,j,n} \,\hat\delta^{\alpha}_t \Phi_{i,j,n},
\end{array}  \end{equation}
which proves the first statement. We note that \eqref{leftright:t} multiplied by $\Delta t$ on both sides of the equality recovers the adjoint property in \eqref{Int_adj} by \eqref{Temp:e1}, \eqref{BFEM:e1}  and the proper discretization of ${}_0\hat I_T^{1-\alpha} \phi(x_i, y_j, t_n)$  in \eqref{Int_adj} as
\begin{equation}\begin{array}{cl}\label{adj:dis}
  \ds {}_0\hat I_T^{1-\alpha} \phi(x_i, y_j, t_n) & \hspace{-0.08in}\ds = \frac{1}{\Gamma(1-\alpha)} \sum_{n=1}^{N_t}\int_{t_{n-1}}^{t_n}\frac{\phi(x_i, y_j ,s)ds}{s^{\alpha}} \\[0.125in]
  \ds&\hspace{-0.08in}\ds  \approx   \frac{1}{\Gamma(1-\alpha)} \sum_{n=1}^{N_t}\int_{t_{n-1}}^{t_n}\frac{\phi(x_i, y_j ,t_n)ds}{s^{\alpha}} = \Delta t \sum_{n=1}^{N_t} b_{n,1} \Phi_{i,j,n}.
  \end{array}
\end{equation}

Similarly, we use  $\delta_x $ in \eqref{Space:e1} and  $\hat \delta_x $  in \eqref{Space:e2} to further derive
\begin{equation}\label{leftright:e2}\begin{array}{cl}
 & \ds \ds \sum_{i=1}^{N_x} \sum_{j=1}^{N_y} \delta_x M^x_{i,j, n} \Phi_{i,j,n}  =\sum_{i=1}^{N_x} \sum_{j=1}^{N_y} \bigg[\f{ M^x_{i+\f{1}{2},j, n}- M^x_{i-\f{1}{2},j, n}}{\Delta x} \bigg]\Phi_{i,j,n}  \\[0.15in]
 & \ds\ds = \sum_{i=1}^{N_x-1} \sum_{j=1}^{N_y}  M^x_{i+\f{1}{2},j, n} \bigg[\f{ \Phi_{i,j, n}- \Phi_{i+1,j, n}}{\Delta x}\bigg] = -\sum_{i=1}^{N_x-1} \sum_{j=1}^{N_y}  M^x_{i+\f{1}{2},j, n} \hat \delta_{x} \Phi_{i,j,n}\\[0.1in]
 & \ds \ds  = -\sum_{i=1}^{N_x} \sum_{j=1}^{N_y}  M^x_{i+\f{1}{2},j, n} \hat \delta_{x} \Phi_{i,j,n}.
\end{array}  \end{equation}
Here we have used the boundary conditions $M^x_{N_x+\f{1}{2},j, n} =0$ and $\hat \delta_{x} \Phi_{N_x,j,n} =0$ which approximates the no-flux boundary  condition in \eqref{2d:bd}.
The second relation in \eqref{leftright:x} could be proved similarly following the procedures in  \eqref{leftright:e2} combined with the expressions \eqref{Space:e1} for $\delta_y $ and \eqref{Space:e2} for  $\hat \delta_y $ and is thus omitted. \eqref{leftright:x} also preserves the adjoint property in \eqref{Int_adj} for the continuous problem.
\end{proof}

\begin{prop}\label{discre:KKT:MFP}
The optimality conditions for this discrete saddle-point system \eqref{sad_DL}--\eqref{Model:DL} read as follows: for $1 \le i \le N_x$, $1 \le j \le N_y$, and $1 \le n \le N_t$
\begin{equation}\begin{array}{l}\label{dis:KKT}
\ds \delta_{t}^{\alpha} P_{i,j, n} + \delta_x M^x_{i,j, n} +\delta_y M^y_{i,j, n} = 0, \\[0.1in]
\ds  \hat\delta_t^{\alpha} \Phi_{i, j, n}+ \p_{P_{i, j, n}} F(P_{i, j, n})-H_{i,j,n} =0,\\[0.1in]
\ds \f{ 2 M^x_{i+\f{1}{2}, j, n}}{P_{i,j,n} + P_{i+1,j,n}}  = \hat \delta_x \Phi_{i, j, n}, \quad \f{ 2 M^y_{i, j+\f{1}{2}, n}}{P_{i,j,n} + P_{i,j+1,n}}  = \hat \delta_y \Phi_{i, j, n}
\end{array}
\end{equation}
with
{\footnotesize\begin{equation}\begin{array}{l}\label{dis:KKT:ICBC}
\ds H_{i,j,n} :=  \f{\big( M_{i+\f{1}{2}, j, n}^{x}\big)^2}{(P_{i,j, n} + P_{i+1, j, n})^2} +  \f{ \big(M_{i-\f{1}{2}, j, n}^{x}\big)^2}{(P_{i-1,j, n} + P_{i, j, n})^2} \\[0.15in]
\ds \qquad \quad + \f{ \big(M_{i, j+\f{1}{2}, n}^{y}\big)^2}{(P_{i,j, n} + P_{i, j+1, n})^2} +  \f{ \big(M_{i, j-\f{1}{2}, n}^{y}\big)^2}{(P_{i,j-1, n} + P_{i, j, n})^2},\\[0.15in]
\ds P_{i, j, 0} = \rho_0(x_{i} , y_j), \quad P_{i,j, N_t} = \rho_1(x_{i} , y_j).
\end{array}
\end{equation}}
\end{prop}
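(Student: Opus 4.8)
The plan is to treat Proposition \ref{discre:KKT:MFP} as the fully discrete mirror of Proposition \ref{MFP}: since $\breve{\mathcal L}$ in \eqref{Model:DL} is a finite-dimensional function of the scalar unknowns $\Phi_{i,j,n}$, $P_{i,j,n}$, $M^x_{i+\f{1}{2},j,n}$ and $M^y_{i,j+\f{1}{2},n}$, I would replace the continuous first-variation computations by ordinary partial derivatives and set each to zero. The role played by the continuous integration by parts \eqref{Lrho:e1}--\eqref{Lrho:e3} in the proof of Proposition \ref{MFP} is now played by the discrete adjoint identities \eqref{leftright:t}--\eqref{leftright:x} of Proposition \ref{lem:adj:opr}, which I would apply first to rewrite the coupling term $\Phi_{i,j,n}\big(\delta_t^\alpha P_{i,j,n}+\delta_x M^x_{i,j,n}+\delta_y M^y_{i,j,n}\big)$ so that $P$ and $\bm M$ appear undifferentiated while $\Phi$ carries the backward operators $\hat\delta_t^\alpha$, $\hat\delta_x$, $\hat\delta_y$; the only boundary remainder is the term $P_{i,j,0}\sum_n b_{n,1}\Phi_{i,j,n}$, which is a fixed constant once the initial datum $P_{i,j,0}=\rho_0$ is imposed.

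First I would differentiate in $\Phi_{i,j,n}$. Only the constraint term in \eqref{Model:DL} depends on $\Phi_{i,j,n}$, so $\p_{\Phi_{i,j,n}}\breve{\mathcal L}=0$ immediately returns the discrete forward transport equation, the first line of \eqref{dis:KKT}. Next I would differentiate in the flux unknowns. After using \eqref{leftright:x}, the variable $M^x_{i+\f{1}{2},j,n}$ enters $\breve{\mathcal L}$ only through the quadratic term $\big(M^x_{i+\f{1}{2},j,n}\big)^2/(P_{i,j,n}+P_{i+1,j,n})$ in \eqref{D:F} and linearly through $-M^x_{i+\f{1}{2},j,n}\hat\delta_x\Phi_{i,j,n}$; setting $\p_{M^x_{i+\f{1}{2},j,n}}\breve{\mathcal L}=0$ yields the third line of \eqref{dis:KKT}, and the $M^y$ relation follows identically.

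The main work is the $P_{i,j,n}$ variation. After substituting \eqref{leftright:t}, the coupling contributes $\hat\delta_t^\alpha\Phi_{i,j,n}$ and $F$ contributes $\p_{P_{i,j,n}}F$. The decisive bookkeeping is that $P_{i,j,n}$ sits in the denominators of exactly four fractions of the discrete cost: the two fractions of $\breve L_{i,j,n}$ in \eqref{D:F}, together with the $x$-fraction of $\breve L_{i-1,j,n}$ and the $y$-fraction of $\breve L_{i,j-1,n}$. Differentiating each $A^2/(P+\cdots)$ produces $-A^2/(P+\cdots)^2$, and I would check that these four contributions assemble precisely into $-H_{i,j,n}$ as defined in \eqref{dis:KKT:ICBC}. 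Collecting the three pieces gives the discrete backward Hamilton--Jacobi equation, the second line of \eqref{dis:KKT}, with the fixed values $P_{i,j,0}=\rho_0$ and $P_{i,j,N_t}=\rho_1$ recorded as the stated initial/terminal conditions.

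I expect the boundary and staggered-grid bookkeeping to be the main obstacle rather than any analytic difficulty. Assembling $H_{i,j,n}$ requires care at the edge cells $i\in\{1,N_x\}$ and $j\in\{1,N_y\}$, where one of the four neighboring fractions is absent; there I would invoke the discrete no-flux conditions $M^x_{N_x+\f{1}{2},j,n}=0$ and $\hat\delta_x\Phi_{N_x,j,n}=0$ (with their $y$-counterparts) already used in \eqref{leftright:e2} to confirm that the missing terms contribute nothing and that \eqref{leftright:x} holds over the full index range. I would likewise track the index shifts in \eqref{leftright:t} so that the backward weights reassemble into exactly $\hat\delta_t^\alpha\Phi_{i,j,n}$ with the correct sign, certifying that \eqref{dis:KKT} is the faithful L1/staggered-grid analogue of the continuous system \eqref{Model:KKT}.
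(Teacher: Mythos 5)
Your proposal is correct and follows essentially the same route as the paper: the paper likewise obtains the first equation by differentiating \eqref{Model:DL} in $\Phi_{i,j,n}$, then uses the discrete adjoint identities of Proposition \ref{lem:adj:opr} to rewrite $\breve{\mathcal L}$ in the form \eqref{Model:DL:r} before differentiating in $P_{i,j,n}$, $M^x_{i+\f{1}{2},j,n}$, and $M^y_{i,j+\f{1}{2},n}$ to get the remaining equations. Your four-fraction bookkeeping for assembling $-H_{i,j,n}$ and the attention to edge cells simply spell out details the paper leaves implicit in its brief outline.
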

\begin{proof}
The proof  could be carried out following that of Proposition \ref{Model:KKT} and thus we will provide only a brief outline of the proof here.

Differentiate \eqref{Model:DL} with respect to $\Phi_{i, j, n}$  to obtain
\begin{equation}\begin{array}{l}\label{dis:phi}
\ds \p_{\Phi_{i, j ,n}} \breve { \mathcal L}=  \Delta V   \big (\delta_{t}^{\alpha} P_{i,j, n} + \delta_x M^x_{i,j, n} +\delta_y M^y_{i,j, n} \big)=0,
\end{array}
\end{equation}
 which yields the first governing equation in \eqref{dis:KKT} and is the numerical approximation to the time-fractional transport PDE in \eqref{Lphi:e3}.

To  differentiate \eqref{Model:DL} with respect to $P_{i,j, n}$, we first utilize the adjoint properties of the discretization schemes \eqref{leftright:t}--\eqref{leftright:x} to reformulate \eqref{Model:DL} as
\begin{equation}\label{Model:DL:r}\begin{array}{l}
\hspace{-0.175in}\ds	\breve { \mathcal L} \ds
 \ds = \Delta V \sum_{n=1}^{N_t} \sum_{i=1}^{N_x}\sum_{j=1}^{N_y} \Big[\breve L_{i,j,n}+ F(P_{i, j, n}) + P_{i,j,n} \,\hat\delta^{\alpha}_t \Phi_{i,j,n} - b_{n,1}P_{i,j,0} \,\Phi_{i,j,n} \\[0.15in]
 \ds \qquad \qquad \qquad \qquad  -     M^x_{i+\f{1}{2},j, n} \hat \delta_x  \Phi_{i,j,n} -   M^y_{i,j+\f{1}{2}, n} \hat \delta_y \Phi_{i,j,n} \Big].
\end{array}\end{equation}
Differentiate the above expression with respect to $P_{i,j, n}$ and incorporate \eqref{D:F} and \eqref{dis:KKT:ICBC} to obtain
\begin{equation}\begin{array}{l}\label{dis:p}
\hspace{-0.15in}\ds \p_{P_{i,j, n}} \breve { \mathcal L} = \Delta V \Big[ -H_{i,j,n} + \p_{P_{i, j, n}} F(P_{i, j, n}) + \hat \delta_{t}^{\alpha} \Phi_{i, j, n}\Big]=0,
\end{array}
\end{equation}
this implies
$ \hat\delta_t^{\alpha} \Phi_{i, j, n}+ \p_{P_{i, j, n}}  F(P_{i, j, n})- H_{i,j,n}  =0$,
which might be considered as  a discretized version of the backward Hamilton-Jacobi PDE \eqref{Lrho:e7}.

Finally, differentiate \eqref{Model:DL:r} with respect to $M^x_{i+\f{1}{2}, j, n}$ and $M^y_{i, j+\f{1}{2}, j, n}$, respectively, and combine \eqref{D:F}  to obtain
\begin{equation}\begin{array}{l}\label{dis:m}
 \ds \p_{M^x_{i+\f{1}{2}, j, n}} \breve { \mathcal L} = \Delta V   \bigg[\f{ 2 M^x_{i+\f{1}{2}, j, n}}{P_{i,j,n} + P_{i+1,j,n}}  - \hat \delta_x \Phi_{i, j, n} \bigg]= 0,\\[0.15in]
 \ds \p_{M^y_{i, j+\f{1}{2}, n}} \breve { \mathcal L} = \Delta V   \bigg[\f{ 2 M^y_{i, j+\f{1}{2}, n}}{P_{i,j,n} + P_{i,j+1,n}}  - \hat \delta_y \Phi_{i, j, n} \bigg]= 0,
\end{array}
\end{equation}
thus we obtain the last two equations in \eqref{dis:KKT} which correspond to $\f{\bm m}{\rho} = \nabla \phi$ in \eqref{Lm:e3}, which  completes the proof.
\end{proof}

\section{Primal-dual formulations}\label{PDHG:MFP}
\subsection{PDHG algorithm}
We review  the primal-dual hybrid gradient (PDHG) algorithm \cite{ChaPoc1,ChaPoc2}, which is a widely used  optimization method to solve the saddle point problem \eqref{sad_DL}--\eqref{Model:DL}.
 Let $\sigma_{m}$ and  $\sigma_{\phi}$ be the step sizes.
At the $k$-th iteration, the update via PDHG algorithm contains the following steps \cite{JacLeg,LiLee}:
\begin{equation}\begin{array}{l}\label{PDHG:AL}
\hspace{-0.1in}  \ds [\bm M^{x,(k+1)}, \bm M^{y,(k+1)}]= \inf_{\bm M^x, \bm M^y}~ \breve{ \mathcal L}(\bm P^{(k)}, \bm M^x, \bm M^{y},  \bm {\bar  \Phi}^{(k)})\\[0.125in]
   \qquad \qquad  \qquad \qquad \qquad \qquad \qquad \ds + \f{\|\bm  M^x- \bm M^{x,(k)}\|_2^2}{2 \sigma_m} +  \f{\|\bm  M^y- \bm M^{y,(k)}\|_2^2}{2 \sigma_m},\\[0.125in]
  \hspace{-0.1in}  \ds \bm P^{(k+1)} = \inf_{\bm P}  ~ \breve{ \mathcal L}(\bm P, \bm M^{x, (k+1)}, \bm M^{y, (k+1)}, \bm {\bar  \Phi}^{(k)}) + \f{\|\bm  P- \bm P^{(k)}\|_2^2}{2 \sigma_m},\\[0.125in]
\hspace{-0.1in}  \ds \bm \Phi^{(k+1)}=\sup_{\bm \Phi}~ \breve{ \mathcal L}(\bm P^{(k+1)}, \bm M^{x, (k+1)}, \bm M^{y, (k+1)}, \bm  \Phi) - \f{\|\bm  \Phi- \bm \Phi^{(k)}\|_2^2}{2 \sigma_\phi},\\[0.125in]
\hspace{-0.1in}  \ds  \ds \bar{\bm  \Phi}^{(k+1)}=2 \bm \Phi^{(k+1)}-\bm  \Phi^{(k)},
\end{array}
\end{equation}
where  $\|\cdot\|_2$ represents the discrete $L^2$ norm defined by
\begin{equation}\begin{array}{l}\label{norm}
  \ds  \|\bm M^x\|_2^2 : = \Delta V  \sum_{\bm i \in \mathcal G_{{m_1}}}  \big(M^{x}_{\bm i}\big)^2, \quad \ds  \|\bm M^y\|_2^2 := \Delta V  \sum_{\bm i \in \mathcal G_{{m_2}}}  \big(M^{y}_{\bm i}\big)^2,\\[0.15in]
    \ds  \|\bm P\|_2^2 := \Delta V  \sum_{\bm i \in \mathcal G_\rho}  P_{\bm i}^2, \quad   \ds  \|\bm \Phi\|_2^2 := \Delta V  \sum_{\bm i \in \mathcal G_\phi}  \Phi_{\bm i}^2.
  \end{array}
\end{equation}
The main source of instability in the PDHG algorithm is the decoupling of the $\bm M^x$, $\bm M^y$, $\bm P$ and $\bm \Phi$ update steps. This algorithm converges if the step sizes $\sigma_{m}$ and $\sigma_{\phi}$ satisfy
$\sigma_{m}\sigma_{\phi} \big\|\bm K^\top \bm  K\big\|_{L^2} \le \sigma^* <1$
with $\bm K$ defined in \eqref{Model:M} being the preconditioner generated by the numerical approximation to the time-fractional
transport equation \eqref{Model:Dis}, and thus the step sizes of the discrete PDHG algorithm
must depend on the grid resolution.

To resolve this issue, the G-prox PDHG algorithm, a variation of the PDHG algorithm with a  preconditioner, was developed in \cite{JacLeg} to provide an appropriate choice of norms for the PDHG algorithm.
Specifically,  the G-prox PDHG modifies the step of updating $\bm \Phi^{(k+1)}$ in \eqref{PDHG:AL} as follows:
\begin{equation}\begin{array}{l}\label{GPDHG:AL}
\hspace{-0.1in}  \ds \bm \Phi^{(k+1)}=\sup_{\bm \Phi}~\breve{ \mathcal L}(\bm P^{(k+1)}, \bm M^{x, (k+1)}, \bm M^{y, (k+1)}, \bm  \Phi) - \f{\|\bm  \Phi- \bm \Phi^{(k)}\|_H^2}{2 \sigma_\phi},
\end{array}
\end{equation}
where  the norm in \eqref{GPDHG:AL} is augmented from $L^2$ to $H$ with the norm $\|\cdot\|_H$  defined as
\begin{equation}\begin{array}{l}\label{GPDHG:Precon}
  \ds \|\bm \Phi\|_H^2 : = \big\|\bm K^\top \bm \Phi\big\|_{2}^2.
\end{array}
\end{equation}
\begin{prop}\label{Gpdhg:Conv}
The step sizes of the G-prox PDHG algorithm only need to satisfy
  $\ds \sigma_{m}\sigma_{\phi} <1$.
\end{prop}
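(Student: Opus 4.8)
The plan is to recognize the discrete saddle-point problem \eqref{sad_DL}--\eqref{Model:DL} as a bilinearly coupled problem of standard primal--dual type, and then to show that the G-prox preconditioner makes the coupling norm exactly $1$, so that the grid-dependent factor $\|\bm K^\top \bm K\|_{L^2}$ appearing in the unpreconditioned condition is replaced by $1$. First I would collect the primal unknowns into $\bm U := [\bm P^\top, {\bm M^x}^\top, {\bm M^y}^\top]^\top$ and write $\breve{\mathcal L}(\bm U, \bm \Phi) = \mathcal F(\bm U) + \langle \bm \Phi, \bm K \bm U\rangle$, where $\mathcal F$ gathers the convex cost terms $\breve L_{i,j,n} + F(P_{i,j,n})$ and $\bm K$ is the matrix from \eqref{Model:M}. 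The coupling is exactly bilinear, and the adjoint identity $\langle \bm \Phi, \bm K \bm U\rangle = \langle \bm K^\top \bm \Phi, \bm U\rangle$ is precisely the discrete integration-by-parts already established in Proposition \ref{lem:adj:opr}. In this form the $\bm U$-updates in \eqref{PDHG:AL} use the discrete $L^2$ metric with step size $\sigma_m$, while after the modification \eqref{GPDHG:AL} the $\bm \Phi$-update uses the $H$-metric \eqref{GPDHG:Precon} with step size $\sigma_\phi$.

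Next I would invoke the convergence theory of preconditioned PDHG / G-prox \cite{ChaPoc1,ChaPoc2,JacLeg}: the iteration \eqref{PDHG:AL} with the modified dual step \eqref{GPDHG:AL} converges provided $\sigma_m \sigma_\phi\, L^2 < 1$, where $L$ is the norm of the bilinear coupling measured in the primal $L^2$ and dual $H$ metrics,
\[
L := \sup_{\bm U \neq \bm 0,\ \bm \Phi \neq \bm 0} \f{\langle \bm \Phi, \bm K \bm U\rangle}{\|\bm U\|_2\, \|\bm \Phi\|_H}.
\]
This is the very estimate that produces the grid-dependent bound $\sigma_m \sigma_\phi \|\bm K^\top \bm K\|_{L^2} < 1$ in the unpreconditioned case; the only change is that the dual norm on $\bm \Phi$ is now $\|\cdot\|_H$ rather than $\|\cdot\|_2$.

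The heart of the argument is then a single Cauchy--Schwarz computation. Using the adjoint identity and the definition $\|\bm \Phi\|_H = \|\bm K^\top \bm \Phi\|_2$ from \eqref{GPDHG:Precon}, I would write $\langle \bm \Phi, \bm K \bm U\rangle = \langle \bm K^\top \bm \Phi, \bm U\rangle \le \|\bm K^\top \bm \Phi\|_2\, \|\bm U\|_2 = \|\bm \Phi\|_H\, \|\bm U\|_2$, so that $L \le 1$; choosing $\bm U = \bm K^\top \bm \Phi$ attains equality, hence $L = 1$. Substituting $L = 1$ into $\sigma_m \sigma_\phi L^2 < 1$ yields the claimed condition $\sigma_m \sigma_\phi < 1$, manifestly independent of $\Delta x$, $\Delta y$, and $\Delta t$.

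The main obstacle I anticipate is not the norm computation but the well-posedness underlying it: for $\|\cdot\|_H$ to be a genuine norm on the $\bm \Phi$-space one needs $\bm K^\top$ injective (equivalently $\bm K$ of full row rank), and for the preconditioned dual step \eqref{GPDHG:AL} to be solvable one needs $\bm K \bm K^\top$ invertible. I would verify this from the block structure \eqref{Model:M}--\eqref{M_A}: the lower-triangular L1 matrix $\bm A$ has nonzero diagonal $b_{n,n}$, and the difference blocks $\bm C^x$, $\bm C^y$ together with $\bm A \otimes \bm I$ span the full $(N_x N_y N_t)$-dimensional constraint space, so $\bm K$ is surjective and $\bm K \bm K^\top \succ 0$. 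A secondary point worth checking is that the generalized primal--dual estimate applies with the convex-but-not-strongly-convex $\mathcal F$, and that the equality configuration $\bm U = \bm K^\top \bm \Phi$ genuinely lies in the primal space; both are routine once the metric is fixed.
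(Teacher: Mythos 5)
Your proposal is correct and is essentially the paper's own approach: the paper's proof is a direct citation of \cite[Theorem 3.1]{JacLeg}, and your argument reconstructs precisely the content of that theorem --- the bilinear coupling $\langle \bm \Phi, \bm K \bm U\rangle$, the Cauchy--Schwarz computation showing the coupling norm equals $1$ in the $\|\cdot\|_2$/$\|\cdot\|_H$ metric pair (with equality attained at $\bm U = \bm K^\top \bm \Phi$), and the standard PDHG step-size condition with that norm substituted. Your well-posedness check also goes through, and in fact more simply than you anticipate: $\bm K$ has full row rank already from its first block alone, since columns $2,\dots,N_t+1$ of $\bm A$ form a lower-triangular matrix with nonzero diagonal entries $b_{n,n}$, so $\bm K \bm K^\top$ is invertible and $\|\cdot\|_H$ is a genuine norm.
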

\begin{proof}
  The proof of this proposition follows from \cite[Theorem 3.1]{JacLeg} and is thus omitted.
\end{proof}

We note from  Proposition \ref{Gpdhg:Conv} that the step sizes of the G-prox PDHG algorithm are independent of the  preconditioner  $\bm K$ in \eqref{Model:M}, which allows us to choose larger step sizes independent of the grid sizes to reach faster convergence compared with the original PDHG algorithm.

\subsection{Implementations of the algorithm}
We illustrate some implementation details of the G-prox PDHG algorithm  for the discrete saddle point problem \eqref{sad_DL}--\eqref{Model:DL}.
\paragraph{Step 1: update $\bm M^{x, (k+1)}$ and $\bm M^{y, (k+1)}$}
Minimize the discrete function defined in \eqref{sad_DL} in terms of the second and the third components, respectively : Find the optimal $\bm M^{x, (k+1)}$ and $\bm M^{y, (k+1)}$ which are the solutions to
\begin{equation}\label{L:M}
  \ds \inf_{\bm M^x, \bm M^y} \breve{ \mathcal L}(\bm P^{(k)}, \bm M^x, \bm M^{y},  \bm {\bar  \Phi}^{(k)}) + \f{\|\bm  M^x- \bm M^{x,(k)}\|_2^2}{2 \sigma_m} +  \f{\|\bm  M^y- \bm M^{y,(k)}\|_2^2}{2 \sigma_m}.
\end{equation}
We note from the optimality conditions that the minimizers   $\bm M^{x, (k+1)}$ and $\bm M^{y, (k+1)}$   for \eqref{L:M} have explicit expressions, e.g., $\bm M^{x, (k+1)}$  satisfy the following expressions which are obtained by differentiating \eqref{L:M} with respect to $M^x_{i+\f{1}{2},j,n}$  combined with  $ \p_{M^x_{i+\f{1}{2}, j, n}} \breve{ \mathcal L} $ in \eqref{dis:m}  \vspace{-0.1in}
\begin{equation}\begin{array}{l}\label{update:m:e1}
  \ds \f{2 M_{i+\f{1}{2},j, n}^{x,(k+1)}}{P_{i, j, n}^{(k)} + P_{i+1,j, n}^{(k)} } -\hat \delta_x  \bar \Phi_{i,j,n}^{(k)}+ \f{1}{\sigma_m}\big[M_{i+\f{1}{2}, j, n}^{x,(k+1)} - M_{i+\f{1}{2},j, n}^{x,(k)}\big] = 0,\\[0.2in]
  \ds \qquad \qquad 1 \le i \le N_x -1, \quad 1 \le j \le N_y, \quad  n= 1, 2, \cdots, N_t,
  \end{array}
\end{equation}
by which we could update $M_{i+\f{1}{2},j, n}^{x, (k+1)}$ as follows:
\begin{equation}\begin{array}{l}\label{update:m:e2}
  \ds M_{i+\f{1}{2},j, n}^{x, (k+1)} ={\big( { M_{i+\f{1}{2}, j, n}^{x, (k)}} +\sigma_m  \hat \delta_x  \bar \Phi_{i,j, n}^{(k)} \big)}\bigg/{\bigg[\f{2\sigma_m}{P_{i, j, n}^{(k)} + P_{i+1,j, n}^{(k)}} + 1\bigg]}.
  \end{array}
\end{equation}
Similarly, we could update  $\bm M^{y,(k+1)}$ by
\begin{equation}\begin{array}{l}\label{update:m:e3}
  \ds M_{i,j+\f{1}{2}, n}^{y, (k+1)} ={\big( { M_{i, j+\f{1}{2}, n}^{y, (k)}}  + \sigma_m  \hat \delta_y  \bar \Phi_{i,j,n}^{(k)}\big)}\bigg/{\bigg[\f{2\sigma_m}{P_{i, j, n}^{(k)} + P_{i,j+1, n}^{(k)}} + 1\bigg]},\\[0.2in]
  \ds \quad 1 \le i \le N_x, \quad 1 \le j \le N_y -1, \quad n= 1, 2, \cdots,  N_t.
  \end{array}
\end{equation}
\paragraph{Step 2: update $\bm P^{(k+1)}$}
Minimize the discrete function defined in \eqref{sad_DL} in terms of the first component: Find the optimal $\bm P^{(k+1)}$ which is the solution to
\begin{equation}\label{L:rho}
  \ds \inf_{\bm P}~  \breve{ \mathcal L}(\bm P, \bm M^{x, (k+1)}, \bm M^{y, (k+1)}, \bm {\bar  \Phi}^{(k)}) + \f{\|\bm  P- \bm P^{(k)}\|_2^2}{2 \sigma_m}.
\end{equation}
We note from \eqref{dis:KKT:ICBC} that $P_{i,j,0}$ and $P_{i,j,N_t}$ are specified during the iteration process, and we update $ P_{i, j, n}^{(k+1)}$ for $1 \le i \le N_x$, $1 \le j \le N_y$ and $1 \le n \le N_t-1$  by the following linearized version of \eqref{L:rho}, which could be reached by combining \eqref{dis:p} as follows:
\begin{equation}\begin{array}{cl}\label{update:rho:e0}
  \ds P_{i,j, n}^{(k+1)} &\ds \hspace{-0.1in}=P_{i, j, n}^{(k)} - \f{\sigma_m}{ \Delta V} \p_{P_{i,j, n}^{(k)}} \breve{ \mathcal L}(\bm P^{(k)}, \bm M^{x, (k+1)}, \bm M^{y, (k+1)}, \bm {\bar  \Phi}^{(k)})\\[0.15in]
&\ds \hspace{-0.1in} = P_{i,j,n}^{(k)} + \sigma_m \big(H_{i,j,n}^{(k)} - \p_{P_{i, j, n}^{(k)}} F (P_{i, j, n}^{(k)} ) - \hat \delta_t^\alpha \bar \Phi_{i, j, n}^{(k)}\big).
\end{array}
\end{equation}
with {\footnotesize
\begin{equation}\begin{array}{l}\label{update:rho:DJ}
\hspace{-0.15in}\ds H_{i,j,n}^{(k)} : =\f{\big( M_{i+\f{1}{2}, j, n}^{x,(k)}\big)^2}{(P_{i,j, n}^{(k)} + P_{i+1, j, n}^{(k)})^2} +  \f{ \big(M_{i-\f{1}{2}, j, n}^{x,(k)}\big)^2}{(P_{i-1,j, n}^{(k)} + P_{i, j, n}^{(k)})^2}  \\[0.15in]
\ds \qquad  + \f{ \big(M_{i, j+\f{1}{2}, n}^{y,(k)}\big)^2}{(P_{i,j, n}^{(k)} + P_{i, j+1, n}^{(k)})^2} +  \f{ \big(M_{i, j-\f{1}{2}, n}^{y,(k)}\big)^2}{(P_{i,j-1, n}^{(k)} + P_{i, j, n}^{(k)})^2}.
\end{array}
\end{equation}}

\begin{algorithm}[h]
\caption{G-prox PDHG algorithm for the fractional MFP/OT problem \eqref{Model:MFP}.}
\label{algorithm}
Input: $\rho_0$, $\rho_1$, $\alpha$, $N_x$, $N_y$, $N_t$, $\sigma_m$, $\sigma_\phi$, $\lambda_E$, $\lambda_Q$, $F_E$, and $Q$.\\
Initialization: $ \bm P = \bm 1 \in \mathbb R^{N_x N_y (N_t+1)\times 1}$,  $ \bm M^x = \bm 1  \in \mathbb R^{(N_x-1) N_y  N_t \times 1}$, $\bm M^y = \bm 1 \in \mathbb R^{N_x (N_y-1) N_t \times 1}$, and  $ \bm \Phi = \bm 0 \in \mathbb R^{N_x N_yN_t \times 1}$.\\
Output: $\bm P $, $\bm M^x$, $\bm M^y$ and $\bm \Phi $.\\
\For {$k \in \mathbb N$}{
Update $M_{i+\f{1}{2}, j,  n}^{x, (k+1)}$ and $M_{i, j+\f{1}{2},  n}^{y, (k+1)}$ by \vspace{-0.05in} 
{\footnotesize
\begin{equation}\begin{array}{l}\label{AL:m1}
  \ds M_{i+\f{1}{2},j, n}^{x, (k+1)} ={\big( { M_{i+\f{1}{2}, j, n}^{x, (k)}} +\sigma_m  \hat \delta_x  \bar \Phi_{i,j, n}^{(k)} \big)}\bigg/{\bigg[\f{2\sigma_m}{P_{i, j, n}^{(k)} + P_{i+1,j, n}^{(k)}} + 1\bigg]},\\[0.1in]
\ds M_{i,j+\f{1}{2}, n}^{y, (k+1)} ={\big( { M_{i, j+\f{1}{2}, n}^{y, (k)}}  + \sigma_m  \hat \delta_y  \bar \Phi_{i,j,n}^{(k)}\big)}\bigg/{\bigg[\f{2\sigma_m}{P_{i, j, n}^{(k)} + P_{i,j+1, n}^{(k)}} + 1\bigg]}.
\end{array}
\end{equation}}
Update $ P_{i,j,n}$   by {\footnotesize
\begin{equation}\begin{array}{l}\label{AL:rho}
  \ds \ds P_{i,j, n}^{(k+1)}  =   P_{i,j,n}^{(k)} + \sigma_m \big(H_{i,j,n}^{(k)} - \p_{P_{i, j, n}^{(k)}}F (P_{i, j, n}^{(k)} ) - \hat \delta_t^\alpha \bar \Phi_{i, j, n}^{(k)}\big).
\end{array}
\end{equation}}
Here $H_{i,j,n}^{(k)}$ is defined in \eqref{update:rho:DJ}.

Update $\bm \Phi^{(k+1)}$  by {\footnotesize
\begin{equation}\begin{array}{l}\label{AL:phi}
  \ds \ds\ds \bm \Phi^{(k+1)}  = \bm \Phi^{(k)} + \sigma_\phi (\bm K \bm K^\top)^{-1} \big( \delta_t^\alpha \bm P^{(k+1)} + \delta_x \bm M^{x, (k+1)} +\delta_y \bm M^{y,(k+1)} \big),\\[0.1in]
  \bar{\bm \Phi}^{(k+1)}=2 \bm \Phi^{(k+1)}- \bm \Phi^{(k)}.
  \end{array}
\end{equation}}
}
\end{algorithm}

\paragraph{Step 3: update $\bm \Phi^{(k+1)}$}
Maximize the discrete function defined in \eqref{sad_DL} in terms of the last component: Find the optimal $\bm \Phi^{(k+1)}$ which is the solution to  \vspace{-0.05in}
\begin{equation}\label{L:phi}
 \ds \bm \Phi^{(k+1)}=\sup_{\bm \Phi}~ \breve{ \mathcal L}(\bm P^{(k+1)}, \bm M^{x, (k+1)}, \bm M^{y, (k+1)}, \bm  \Phi) - \f{\|\bm  \Phi- \bm \Phi^{(k)}\|_H^2}{2 \sigma_\phi}.
\end{equation}
Similar to \eqref{update:m:e1}, this maximization  has a closed form solution which can be further simplified by  \eqref{dis:phi} and \eqref{GPDHG:Precon} as follows:
\begin{equation}\begin{array}{l}\label{update:phi:e1}
  \ds \bm \Phi^{(k+1)}  = \bm \Phi^{(k)} + \sigma_\phi (\bm K \bm K^\top)^{-1} \big( \delta_t^\alpha \bm P^{(k+1)} + \delta_x \bm M^{x, (k+1)} +\delta_y \bm M^{y,(k+1)} \big).
  \end{array}
\end{equation}
We finally do the last step in the PDHG algorithm \eqref{PDHG:AL}, that is,
$\bm {  \bar \Phi}^{(k+1)}=2 \bm \Phi^{(k+1)}-\bm \Phi^{(k)}$.
The G-prox PDHG algorithm for the time-fractional MFP/OT  problem \eqref{Model:MFP} is summarized in Algorithm \ref{algorithm}.

\section{Numerical investigation}\label{Nume:MFP}
We conduct numerical experiments to test the convergence of the proposed numerical algorithm to  one-dimensional  integer-order and fractional OT examples.  We then perform some numerical experiments to show the efficiency and effectiveness of our proposed algorithm as well as the flexibility of our algorithm on diverse OT and MFP problems.
\subsection{Convergence behavior}\label{S:Converge}
The data are as follows: $\Omega \times [0, T] =[0, 1]^2$,  $\rho_0(x)=x+\frac{1}{2}$, and  $\rho_1(x)=1$. In   Algorithm \ref{algorithm},  we choose   $\sigma_m = 0.1$ and $\sigma_\phi = 0.05$ throughout this section.
\paragraph{Integer-order OT} For the integer-order OT problem,  the exact solutions $\rho$ and $ m$ are given as follows \cite{YuLai}:
\begin{equation}\label{Int_OT_rho}
\rho(x, t)=\left\{\begin{array}{ll}
\ds x+\frac{1}{2}, & t=0, \\[0.075in]
\ds \frac{\sqrt{2 t x+\left(\frac{t}{2}-1\right)^2}+t-1}{t \sqrt{2 t x+\left(\frac{t}{2}-1\right)^2}}, & 0<t \leq 1 .
\end{array}\right.\vspace{-0.08in}
\end{equation}
\begin{equation}\label{Int_OT_m}
m(x, t)=\left\{\begin{array}{ll}
\ds \frac{1}{4} x(x-1)(2 x+1), & t=0, \\[0.1in]
\ds \frac{x}{t^2}-\frac{3-t}{2 t^3} \sqrt{2 t x+\left(\frac{t}{2}-1\right)^2} \\[0.1in]
\ds -\frac{(t-1)\left(t^2-4\right)}{8 t^3} \frac{1}{\sqrt{2 t x+\left(\frac{t}{2}-1\right)^2}}-\frac{3 t-4}{2 t^3}, & 0<t \leq 1.
\end{array}\right.
\end{equation}

\begin{table}[!htbp]
\setlength{\abovecaptionskip}{0pt}
\centering
\caption{Convergence rates of the integer-order OT approximation.}	
\begin{tabular}{cccccccc} \hline
  $N_x \times N_t$   & $\| \rho-\bm P\|$   &  Order   &  $\| m-\bm M\|$  & Order \\ \hline

$8^2$ &1.37e-03  & --      & 2.30e-03  & --    \\
$10^2$ &1.10e-03 & 0.95    & 1.84e-03 & 0.96   \\
$20^2$ &5.30e-04 & 1.06    & 9.12e-04  &  1.01   \\
$25^2$ & 4.12e-04 & 1.13    & 7.27e-04  & 1.01    \\ \hline
\end{tabular}
\label{table1:OT}\end{table}
In Table \ref{table1:OT}, we compute the convergence rates of $\bm P$ and $\bm M$ measured in  the discrete $L^2$ norm \eqref{norm},
and the computational results indicate the convergence of  the approximation.

\paragraph{Time-fractional OT}  Since the analytical solutions for the fractional OT problem \eqref{Model:OT} are not available, we compute the reference solution with  fine temporal and spatial  mesh size $\Delta x = \Delta t = 1/200$  to measure the convergence rate. The numerical results are presented in Table \ref{table2:FOT}, which indicates the  convergence of the proposed Algorithm \ref{algorithm} for the fractional OT problem \eqref{Model:OT}.

%
%

\begin{table}[!htbp]
\setlength{\abovecaptionskip}{0pt}
\centering
\caption{Convergence rates of the fractional OT approximation.}	
\begin{tabular}{cccccccc} \hline
\multirow{5}{*}{$\| \rho-\bm P\|$} & $N_x \times N_t$   & $\alpha =0.6$   &     & $\alpha =0.8$   &  & $\alpha =0.9$   & \\ \hline

&$8^2$ &4.28E-02  & --      & 3.89E-02  & --     & 3.89E-02  & --\\
&$10^2$ &3.44E-02 & 0.98    & 3.03E-02  & 1.12   & 3.03E-02  & 1.11\\
&$20^2$ &1.81E-02 & 0.93    & 1.39E-02  &  1.13    & 1.39E-02  & 1.13\\
&$25^2$ &1.54E-02 &0.89    & 1.08E-02  & 1.15   & 1.07E-02  & 1.16 \\ \hline
\multirow{5}{*}{$\| m-\bm M\|$}
&$8^2$  &6.31E-02 & --      &2.99E-02 & --    &1.18E-02  & --\\
&$10^2$ &5.40E-02 &0.70   &2.57E-02 & 0.69   &1.01E-02  &0.71 \\
&$20^2$ &3.13E-02 & 0.79   &1.52E-02 & 0.75   &5.90E-03  &0.78 \\
&$25^2$ &2.66E-02 & 0.89   &1.27E-02 &0.83   &4.80E-03   &0.86 \\ \hline
\end{tabular}
\label{table2:FOT}\end{table}

\subsection{Performance of the one-dimensional OT}\label{S:Perf}
With the numerically tested convergence of the integer-order and fractional OT approximation, we then conduct numerical experiments to investigate the performance of the fractional OT problem for different  values of the fractional order $\alpha$ in comparison with the OT problem constrained with the integer-order transport PDE.

\begin{figure}[h]
	\setlength{\abovecaptionskip}{0pt}
	\centering
\includegraphics[width=1.5in,height=1.25in]{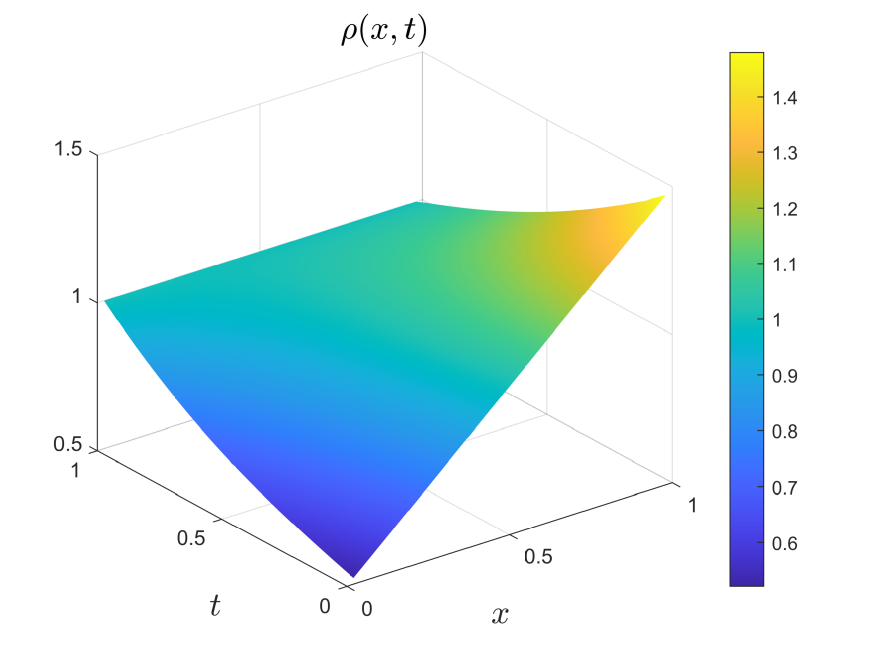}
\includegraphics[width=1.5in,height=1.25in]{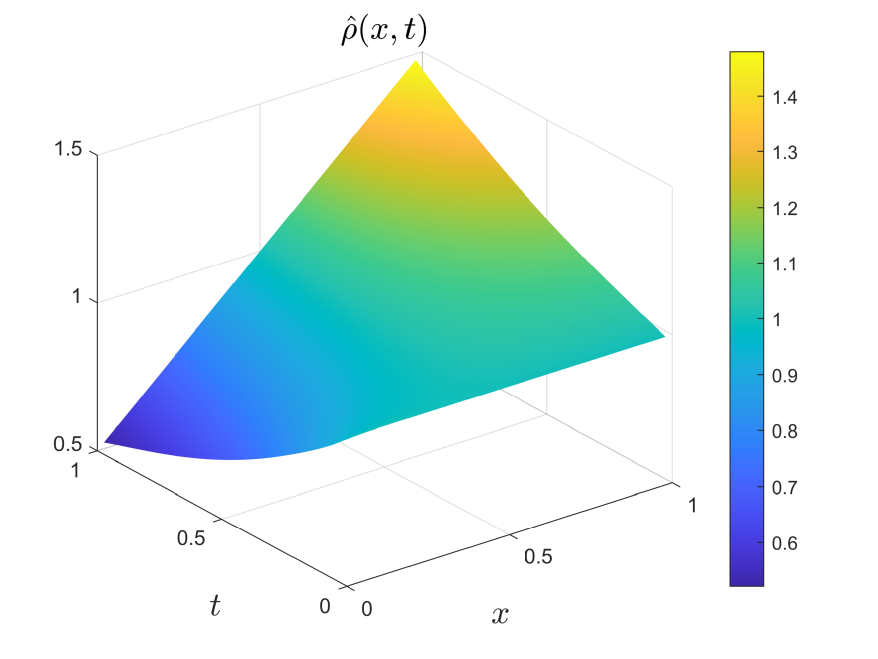}
\includegraphics[width=1.5in,height=1.25in]{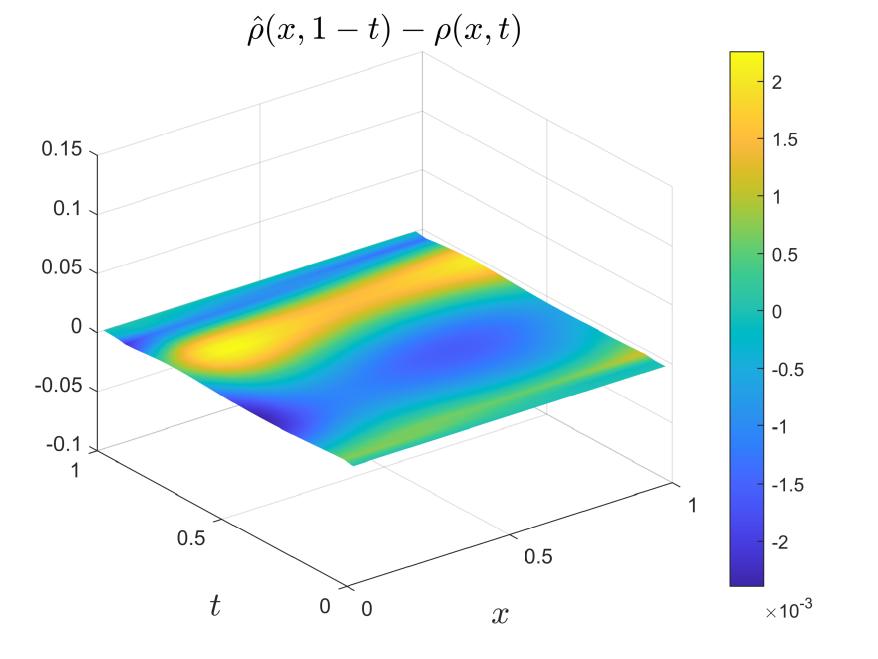}\\
\includegraphics[width=1.5in,height=1.25in]{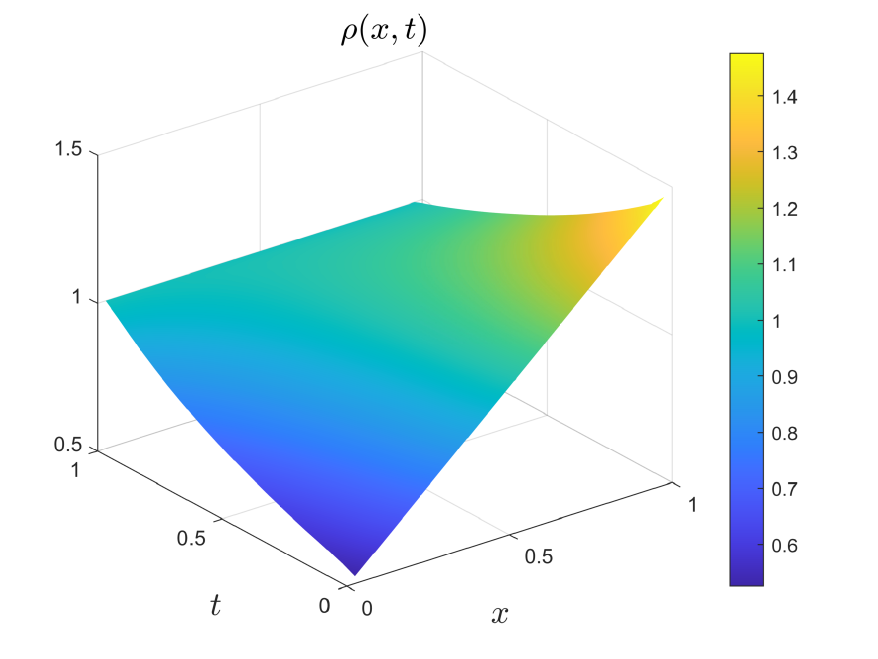}
\includegraphics[width=1.5in,height=1.25in]{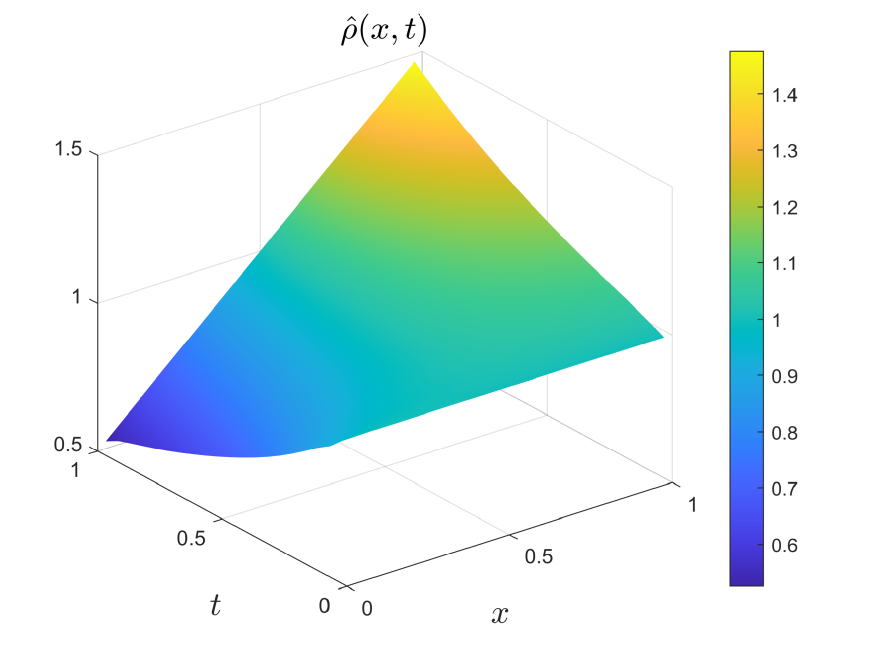}
\includegraphics[width=1.5in,height=1.25in]{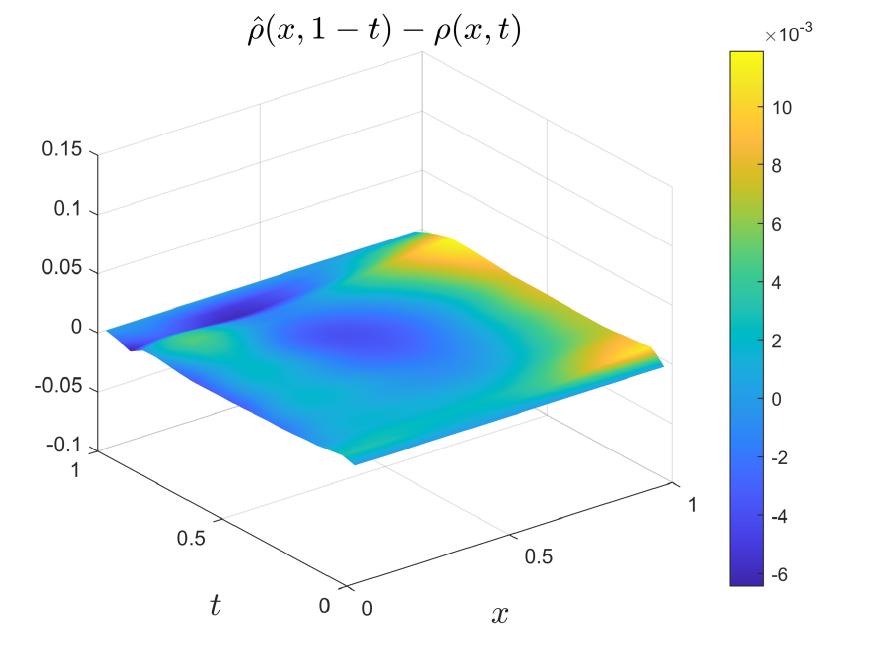}\\
\includegraphics[width=1.5in,height=1.25in]{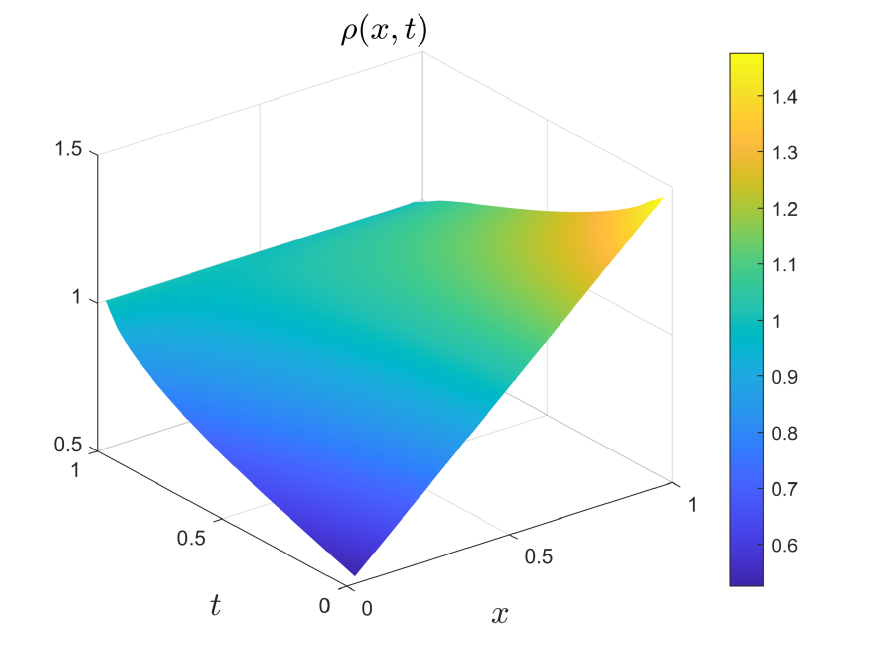}
\includegraphics[width=1.5in,height=1.25in]{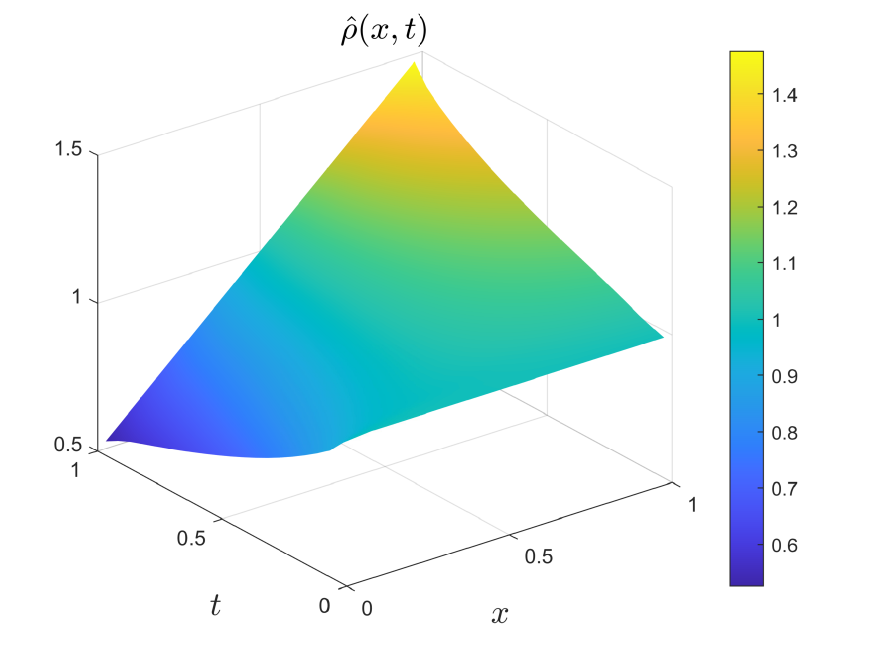}
\includegraphics[width=1.5in,height=1.25in]{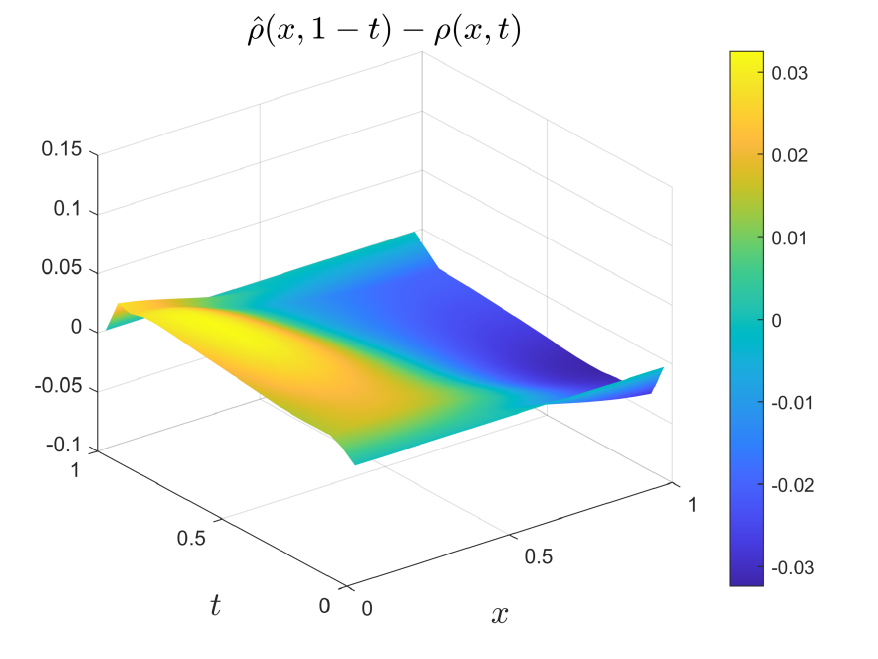}\\
\includegraphics[width=1.5in,height=1.25in]{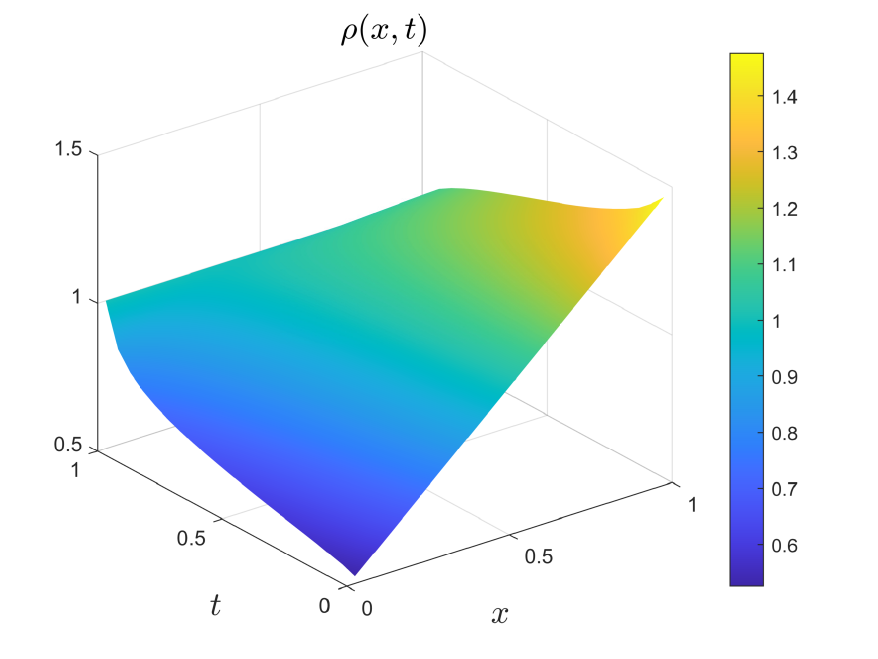}
\includegraphics[width=1.5in,height=1.25in]{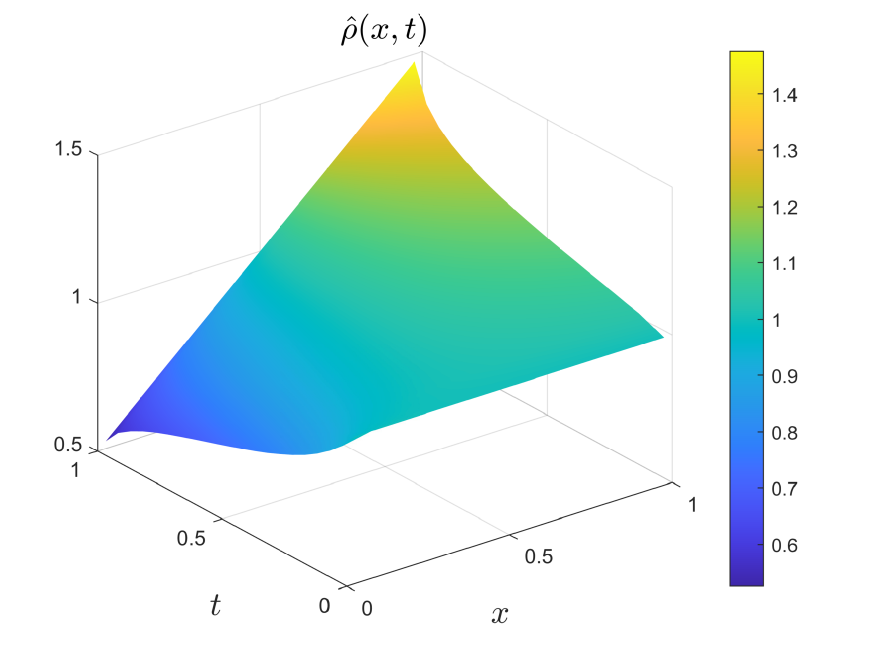}
\includegraphics[width=1.5in,height=1.25in]{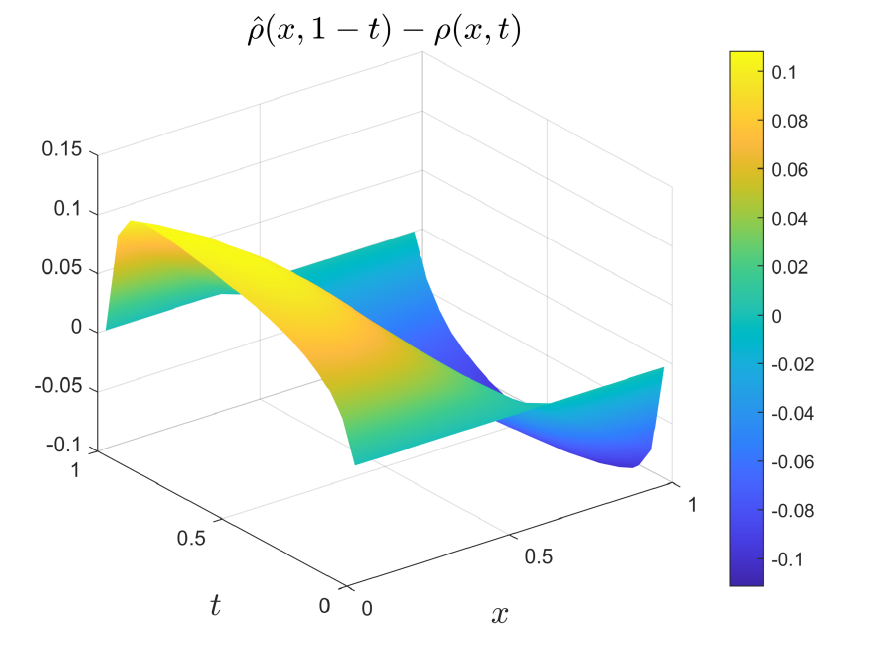}\\
\caption{Performance of the one-dimensional  OT problem for {\it Test 5.2.1} in \S \ref{S:Perf}. Left to right: plots of the density $\rho(x, t)$, $\hat \rho(x, t)$, and $\hat \rho(x, 1-t)-\rho(x, t)$. The first row: the integer-order OT problem. The second through fourth rows: the corresponding plots for fractional OT problem with $\alpha =0.9$, $0.8$, and $0.6$, respectively, in  \eqref{Model:set}.}
\label{figure1:OT}
\end{figure}

\paragraph{Test 5.2.1}
Consider  two densities $\hat \rho(x, t)$ of $\rho_1 \rightarrow \rho_0$ and  $\rho(x, t)$ of $\rho_0 \rightarrow \rho_1$.
The path-reversibility of the integer-order OT problem \cite{Vil} guarantees  the agreement of  $ \rho(x, t)$ and  $\hat \rho(x, T-t)$, which is the backward-in-time counterpart of $\hat \rho(x, t)$. Motivated by this, we are in the position to examine whether this nice property still holds for its fractional analogue defined in \eqref{Model:OT}.
 Let $\Omega \times [0, T] = [0, 1]^2$  and we choose the uniform spatial mesh size  and the temporal step size $\Delta x =\Delta t = 1/40$ in Algorithm \ref{algorithm}. We present the plots of the densities $\rho(x, t)$ of $\rho_0 \rightarrow \rho_1$, $\hat \rho(x, t)$ of $\rho_1 \rightarrow \rho_0$,
 and  $\hat \rho(x, 1-t)-\rho(x, t)$ with $\rho_0$ and $\rho_1$ defined in \S \ref{S:Converge} in Figure \ref{figure1:OT}, where the first row is for the integer-order OT problem, and the second through fourth rows are for fractional OT problem \eqref{Model:OT} with the fractional order  $\alpha=0.9$, $0.8$, and $0.6$, respectively.
We observe  from  Figure \ref{figure1:OT} that \textbf{(i)} the density $\rho(x, t)$ of integer-order OT  problem tends to coincide with $\hat \rho(x, 1- t)$, which  indicates that integer-order OT problem is path-reversible and this observation is consistent with the discussions above  \cite{Vil}. Nevertheless, \textbf{(ii)}  $\rho(x, t)$ of fractional OT problem exhibits more evident deviations from the  path $\hat \rho(x, 1-t)$   for smaller fractional order $\alpha$, which implies that the fractional OT problem might lose the path-reversibility property  exhibited by its integer-order analogue.

\begin{figure}[h]
	\setlength{\abovecaptionskip}{0pt}
	\centering
\includegraphics[width=1.5in,height=1.25in]{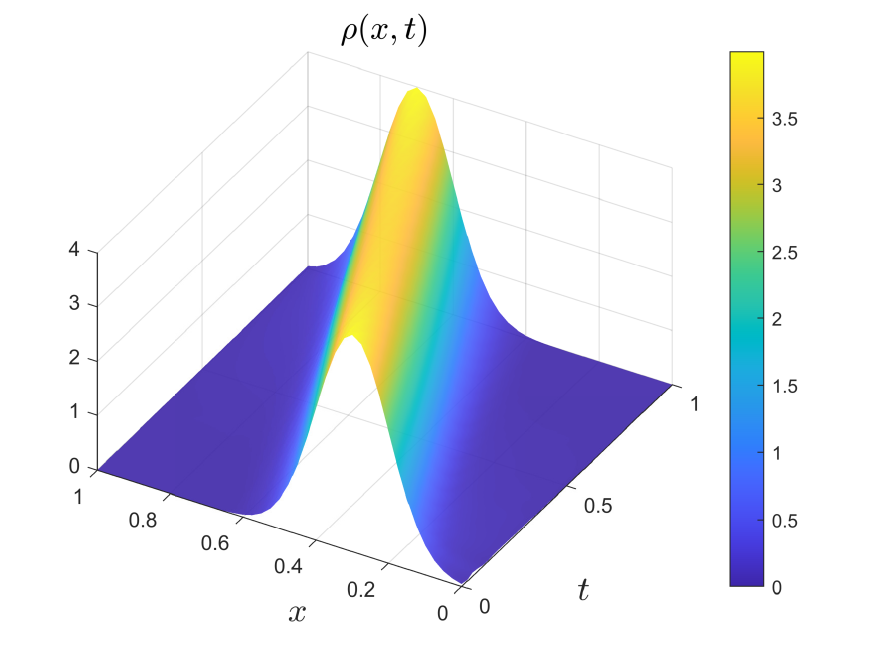}
\includegraphics[width=1.25in,height=1.25in]{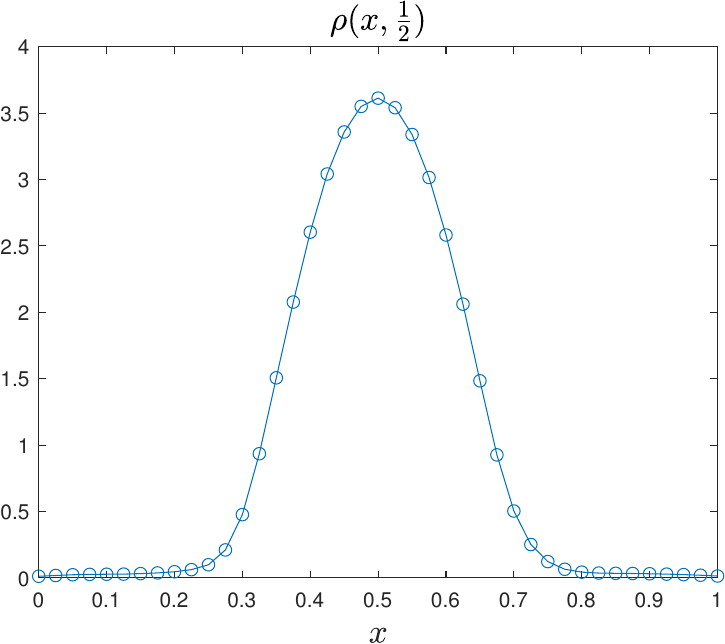} \hspace{0.1in}
\includegraphics[width=1.25in,height=1.25in]{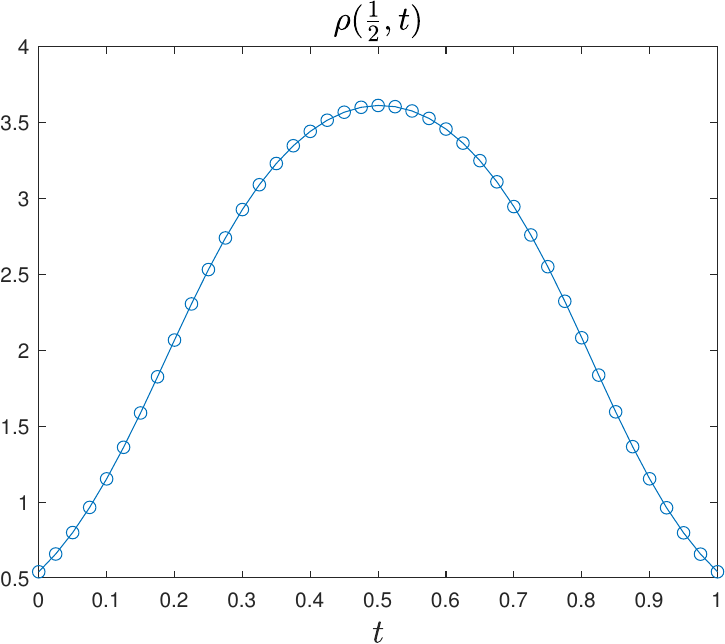}\\
\includegraphics[width=1.5in,height=1.25in]{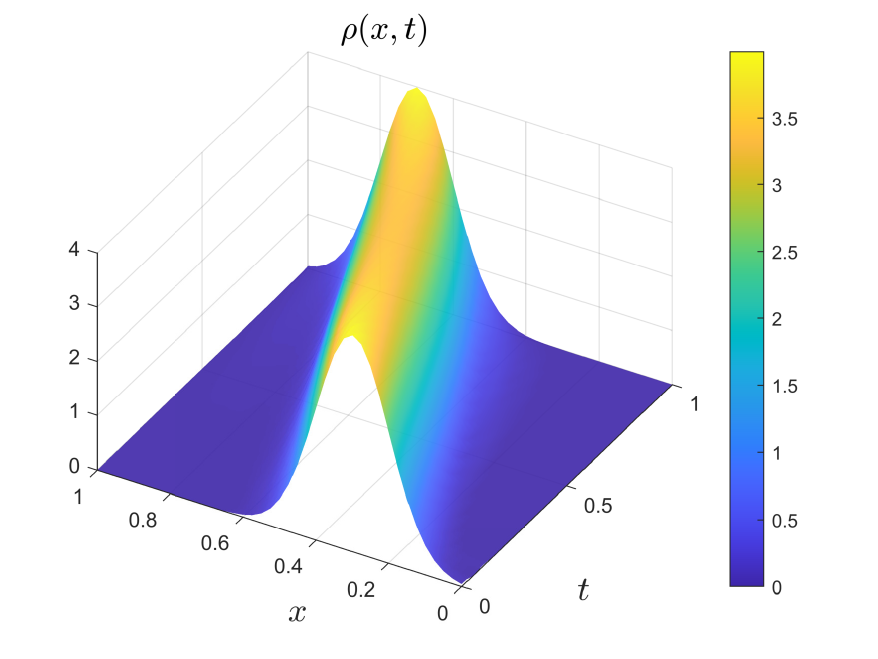}
\includegraphics[width=1.25in,height=1.25in]{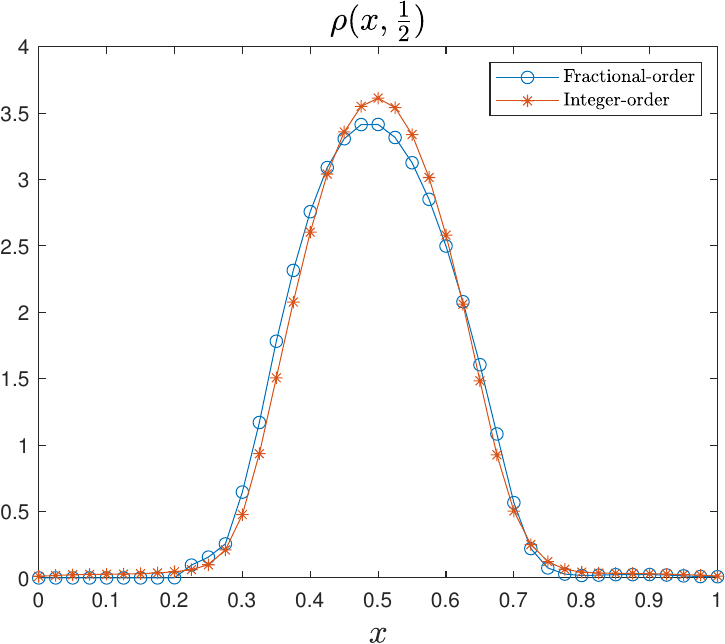}\hspace{0.1in}
\includegraphics[width=1.25in,height=1.25in]{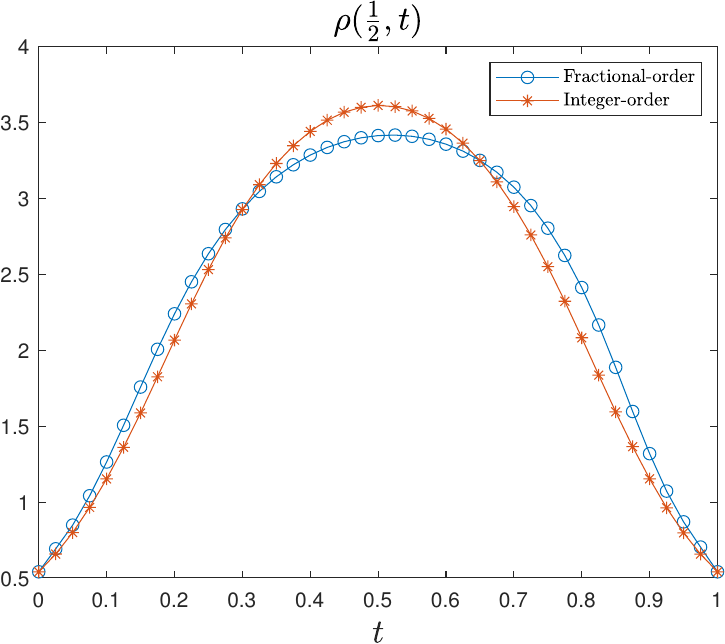}\\
\includegraphics[width=1.5in,height=1.25in]{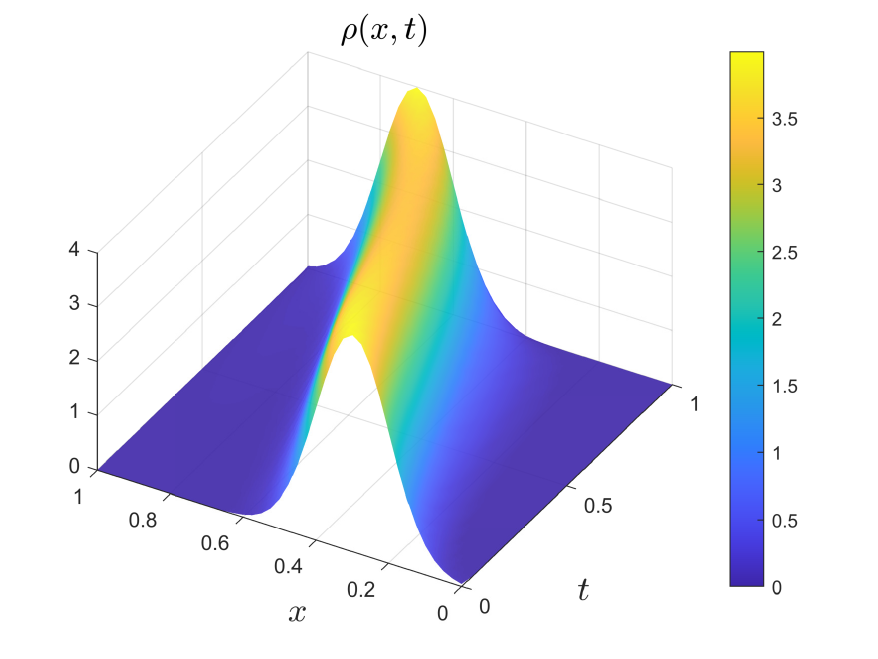}
\includegraphics[width=1.25in,height=1.25in]{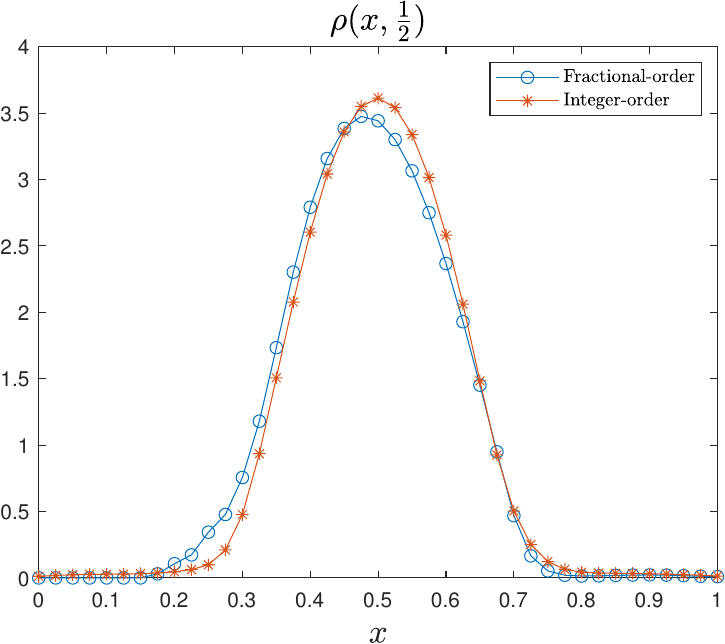}\hspace{0.1in}
\includegraphics[width=1.25in,height=1.25in]{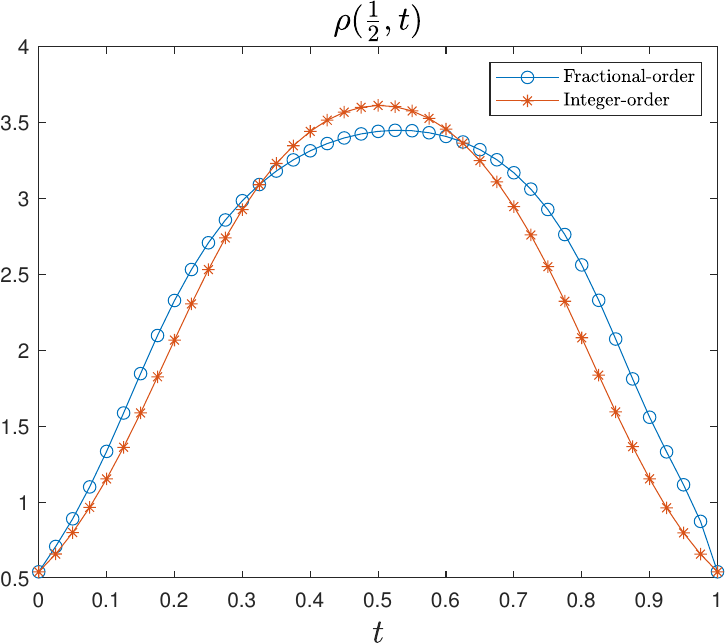}\\
\includegraphics[width=1.5in,height=1.25in]{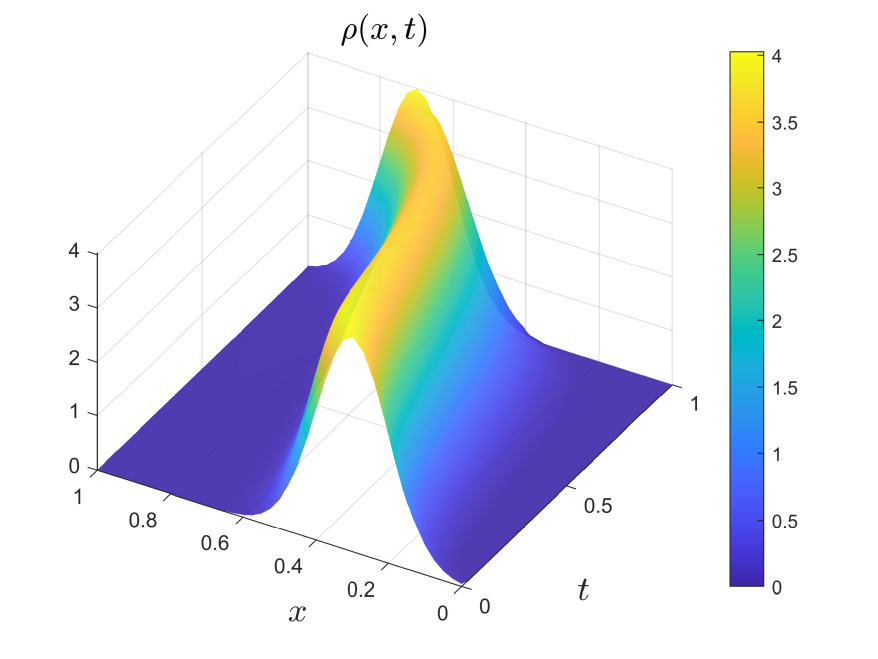}
\includegraphics[width=1.25in,height=1.25in]{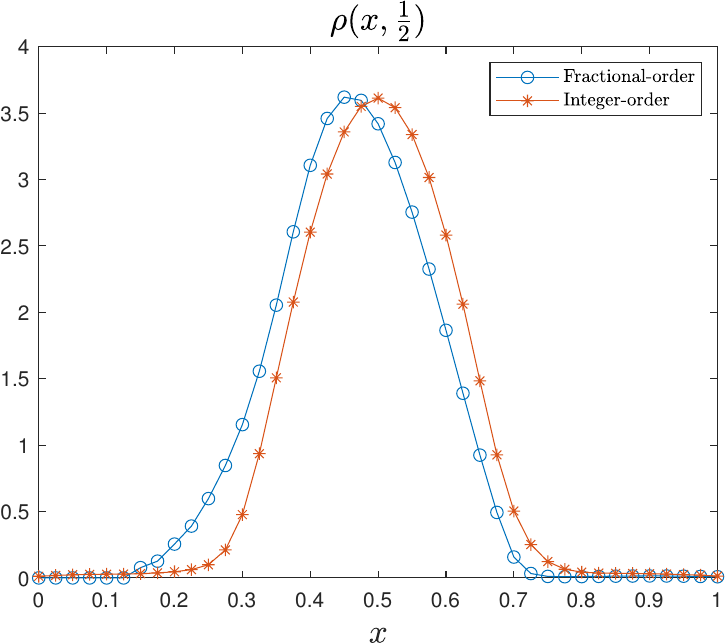}\hspace{0.1in}
\includegraphics[width=1.25in,height=1.25in]{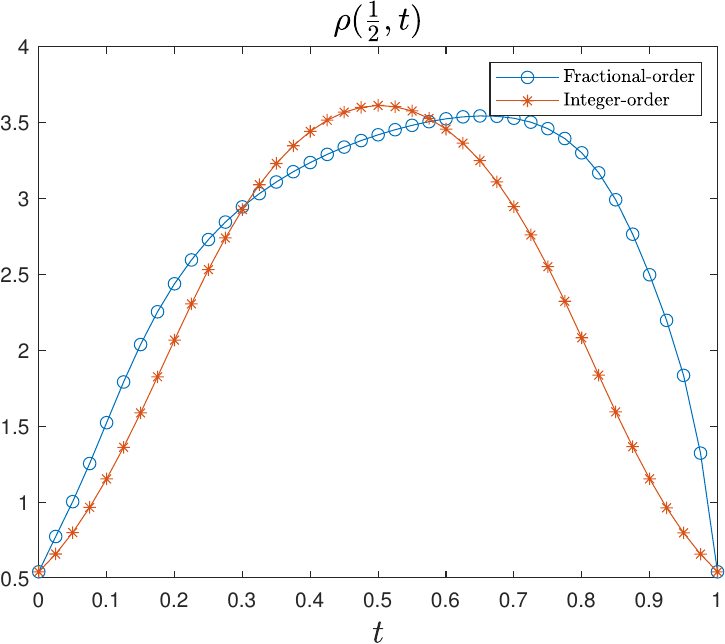}\\

\caption{Performance of the one-dimensional  OT problems for {\it Test 5.2.2} in \S \ref{S:Perf}: left column:  the density $\rho(x, t)$; middle column:  $\rho(x, t)$ at $t=\f{1}{2}$; right column:  $\rho(x, t)$ at $x=\f{1}{2}$. The first row: the integer-order OT problem; the second through fourth rows: the corresponding plots for fractional OT problem with $\alpha =0.9$, $0.8$, and $0.6$ respectively, in  \eqref{Model:set}.}
\label{figure1d1:FOT}
\end{figure}

\paragraph{Test 5.2.2}
We present the plots of the density $\rho(x, t)$   with $\rho(x, t)$ at $t=\f{1}{2}$  and $\rho(x, t)$ at $x=\f{1}{2}$  for the OT problems in Figure \ref{figure1d1:FOT} by choosing the same data as before except that $\rho_0$ and $\rho_1$ are Gaussian distribution densities  with mean $\mu_0 =0.3$, $\mu_1=0.7$ and the standard deviation  $\sigma_0=0.1$, $\sigma_1=0.1$, respectively.
We observe from Figure \ref{figure1d1:FOT} that \textbf{(i)} the density $\rho$ of the fractional OT problem exhibits the asymmetrical structure and demonstrates distortion along both the $x$ and $t$ directions in contrast with the Gaussian shape of that governed by the integer-order PDE. In addition, \textbf{(ii)} fractional OT problem generates the density $\rho$ with slower  propagation speed compared with that of the integer-order OT problem. Furthermore, \textbf{(iii)} the fractional order $\alpha$ in \eqref{Model:set} turns to characterize the propagation speed of the density $\rho$ of the fractional OT problem.  Specifically, the density propagates comparatively faster under larger fractional order $\alpha$ and approximates the solution of the integer-order OT problem as the fractional order $\alpha$ approaches 1. This is consistent with the fact that $\p_t^\alpha \rho \rightarrow \p_t \rho$ as $\alpha \rightarrow 1^{-}$  in \eqref{Model:set} for the density $\rho$ with the proper regularity \cite{JinZhou,Pod}.

\begin{figure}[h]
	\setlength{\abovecaptionskip}{0pt}
	\centering
\includegraphics[width=1.5in,height=1.25in]{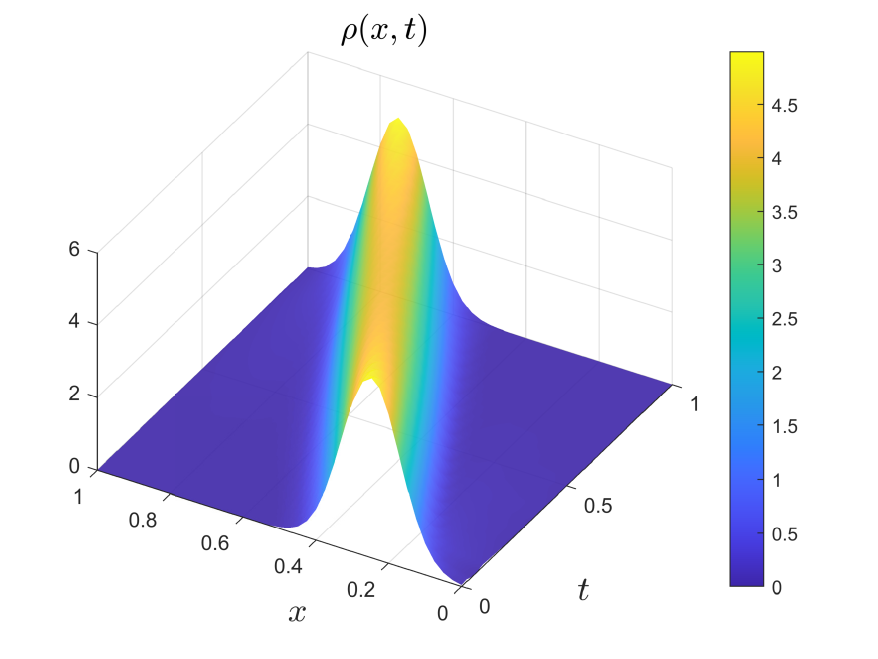}
\includegraphics[width=1.25in,height=1.25in]{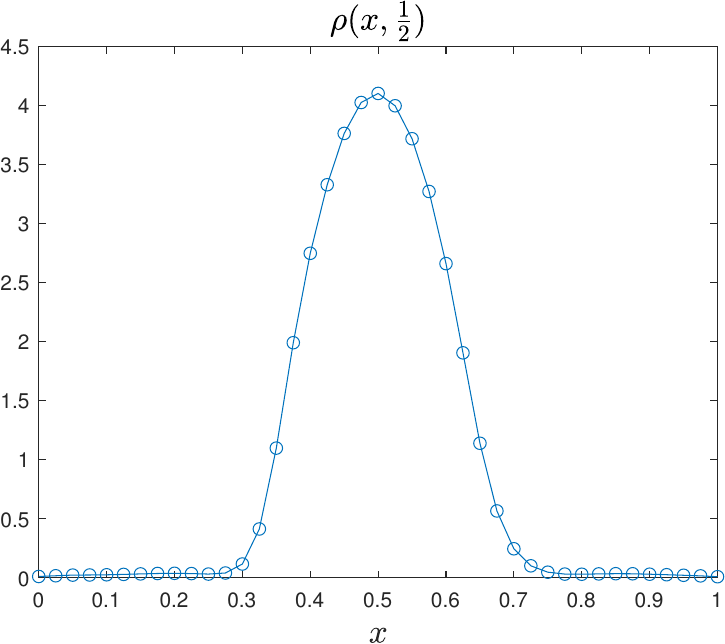} \hspace{0.1in}
\includegraphics[width=1.25in,height=1.25in]{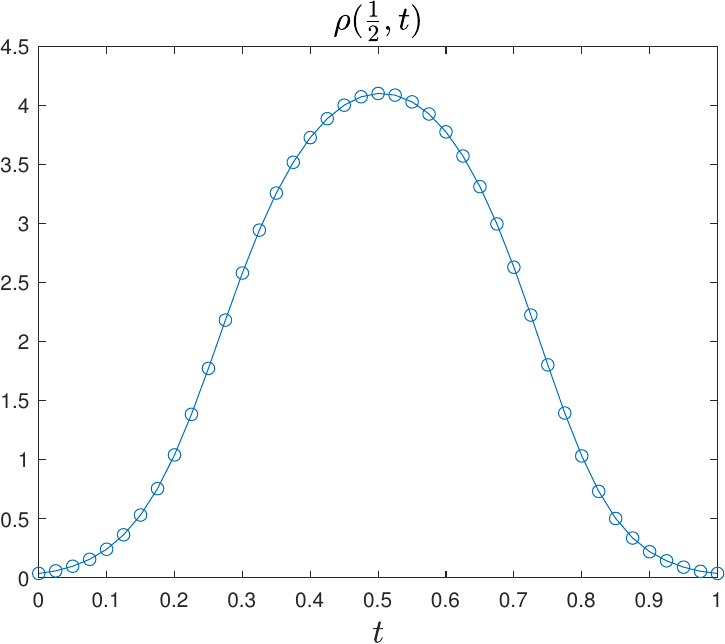}\\
\includegraphics[width=1.5in,height=1.25in]{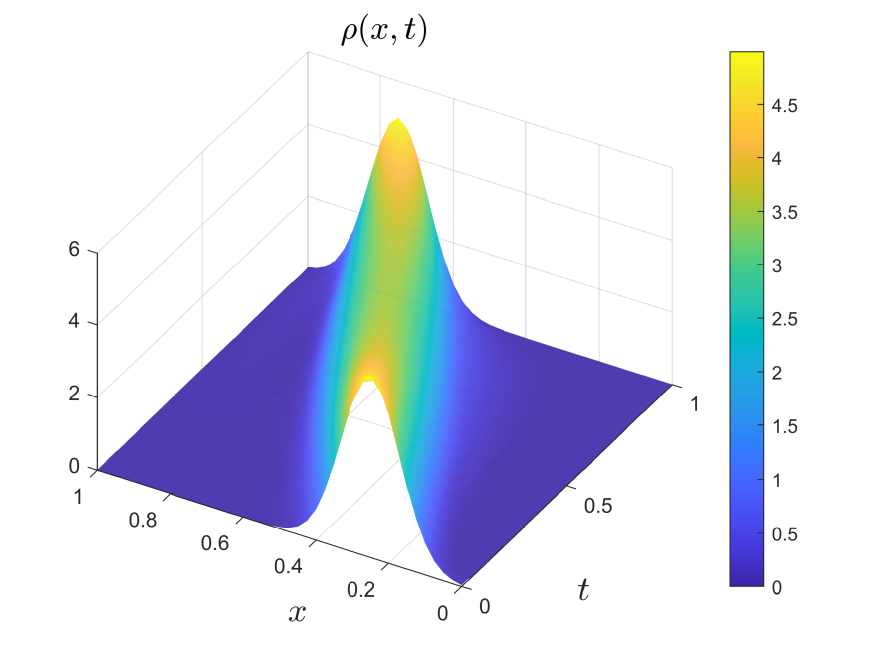}
\includegraphics[width=1.25in,height=1.25in]{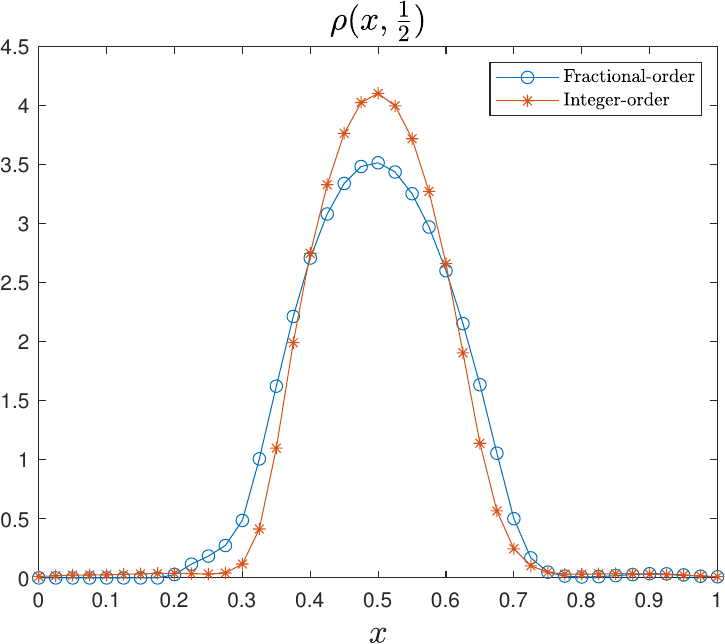} \hspace{0.1in}
\includegraphics[width=1.25in,height=1.25in]{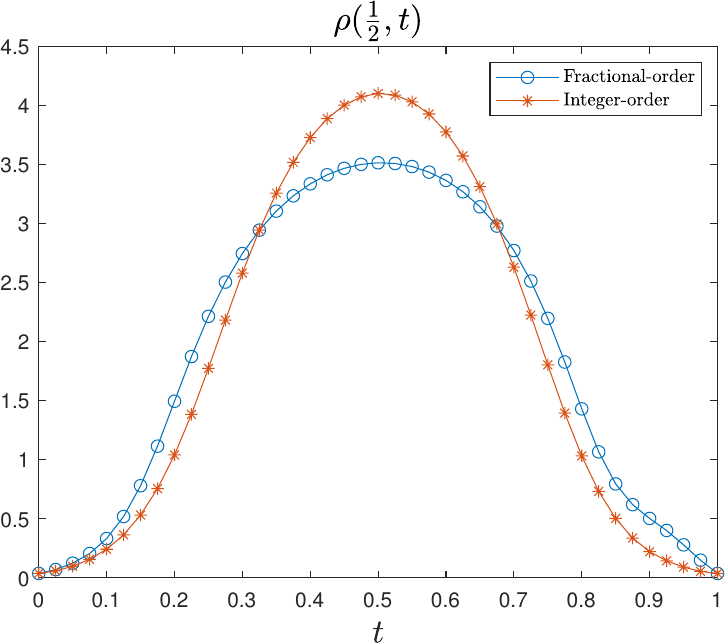}\\
\includegraphics[width=1.5in,height=1.25in]{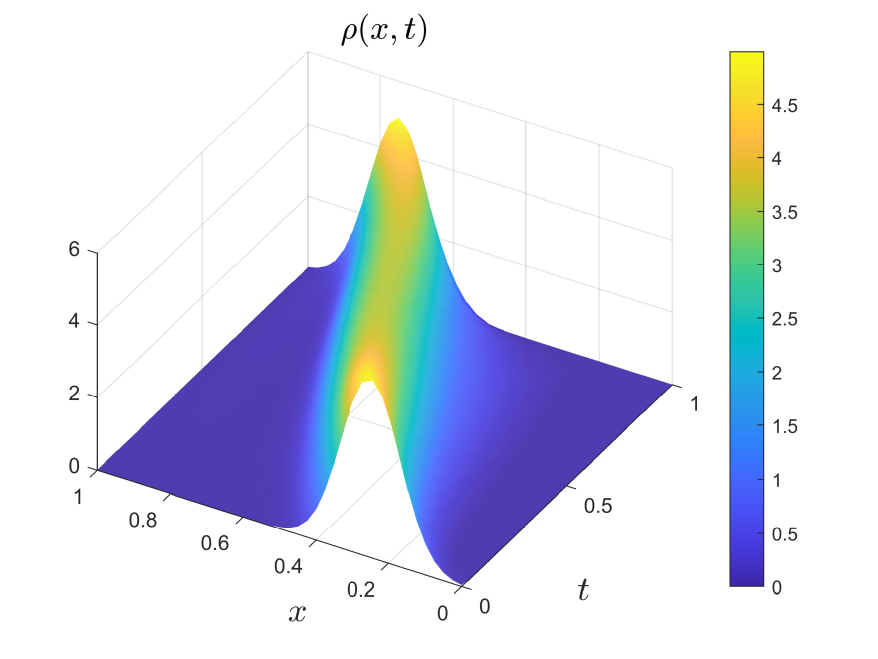}
\includegraphics[width=1.25in,height=1.25in]{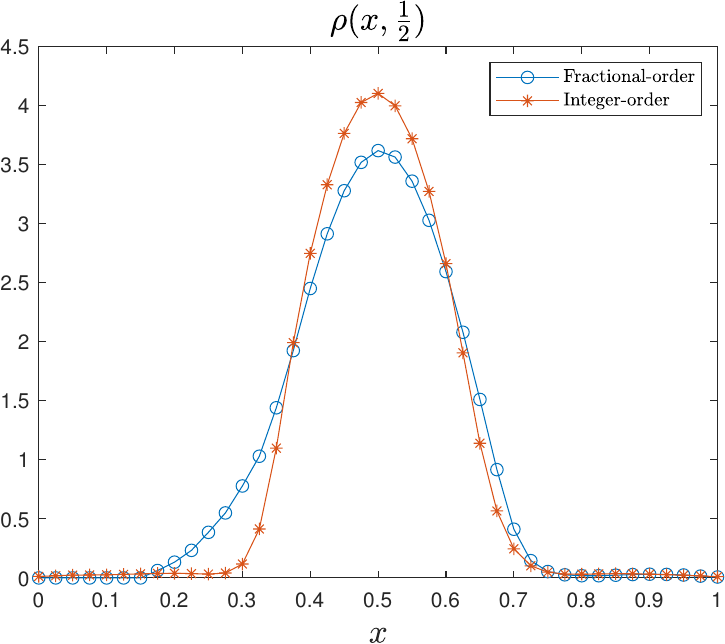} \hspace{0.1in}
\includegraphics[width=1.25in,height=1.25in]{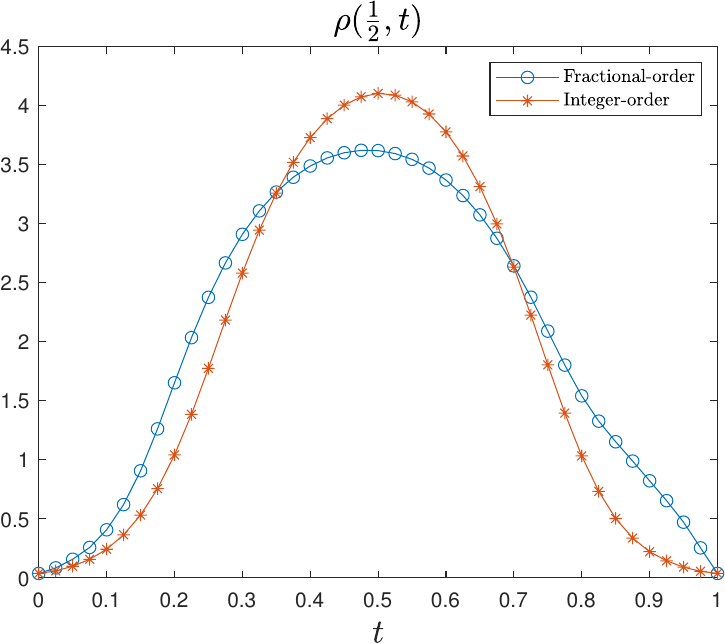}\\
\includegraphics[width=1.5in,height=1.25in]{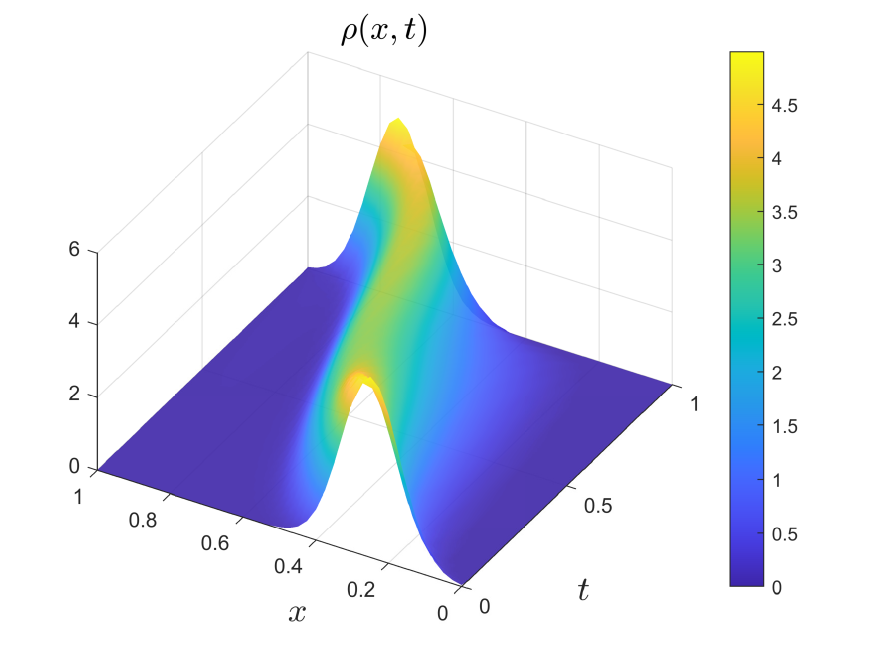}
\includegraphics[width=1.25in,height=1.25in]{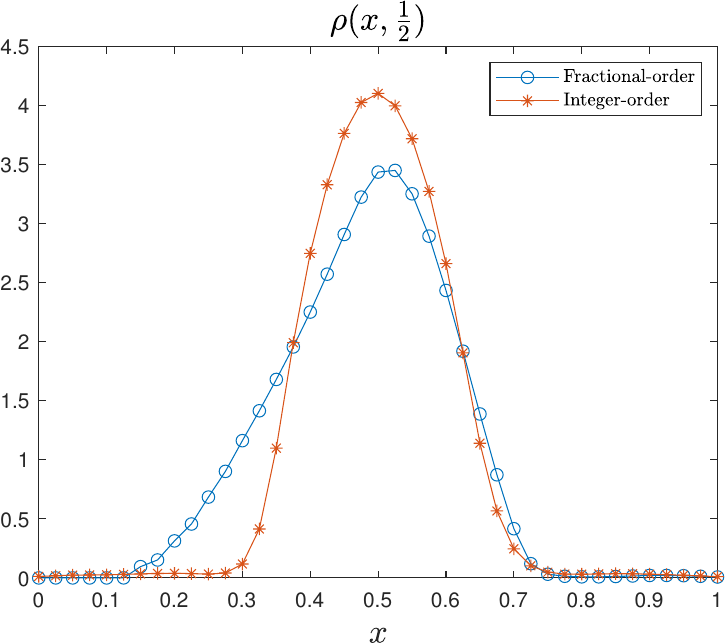} \hspace{0.1in}
\includegraphics[width=1.25in,height=1.25in]{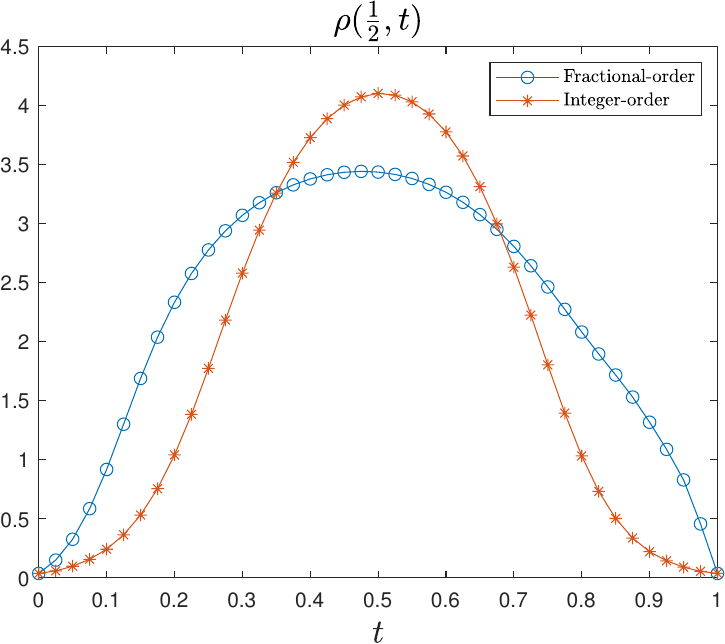}\\
\caption{Performance of the one-dimensional  OT problem for {\it Test 5.2.3} in \S \ref{S:Perf}: left column:  the density $\rho(x, t)$; middle column:  $\rho(x, t)$ at $t=\f{1}{2}$; right column:  $\rho(x, t)$ at $x=\f{1}{2}$. The first row: the integer-order OT problem; the second through fourth rows: the corresponding plots for fractional OT problem with $\alpha =0.9$, $0.8$, and $0.6$ respectively, in  \eqref{Model:set}.}
\label{figure1d2:FOT}
\end{figure}

 \paragraph{Test 5.2.3}
  We only modify the mean values and  standard deviations of the Gaussian distribution densities as follows  while keeping all the other data in {\it Test 5.2.2} unchanged:
  $$(\mu_0, \mu_1) = (0.25, 0.75), \quad \sigma_0 = \sigma_1 = 0.08.$$
  We present the plots of the density $\rho(x, t)$  with $\rho(x, t)$ at $t=\f{1}{2}$  and $\rho(x, t)$ at $x=\f{1}{2}$  for the  fractional OT problem \eqref{Model:set} with $\alpha = 0.9$, $0.8$, and $0.6$ respectively, in comparison with those for the integer-order OT problem in  Figure \ref{figure1d2:FOT}, from which we could obtain similar observations as in {\it Test 5.2.2}.

\subsection{Performance of the two-dimensional OT}\label{S:Perf2d}
Let $\Omega \times [0, T] = [0, 1]^3$. We choose the initial and terminal densities to be Gaussian distribution densities with mean $\bm {\mu_0} =(0.3, 0.3)$, $\bm {\mu_1} =  (0.7, 0.7)$ and the standard deviation $ {\sigma_0} =  {\sigma_1} = 0.1$, respectively, and follow the same data as those for Figure \ref{figure1:OT}. 

 We present the snapshots of the density $\rho$  at $t =0.1$, $0.3$, $0.5$, $0.7$, and $0.9$  in Figure \ref{figure2d1:FOT}, where the first row is  for integer-order OT problem, and the second through fourth rows are  for fractional OT problem with $\alpha =0.9$, $0.8$, and $0.6$, respectively. We observe from Figure \ref{figure2d1:FOT} that the density of the integer-order OT problem is symmetric around its mean, while its asymmetrical shape for the fractional OT problem is clearly visible, with the  tail located at the lower-left corner broader than the upper-right one.  This phenomenon becomes even more pronounced as the fractional order decreases, describing the effects of the density far lagging behind its mean position. These observations coincide with what we found in \S \ref{S:Perf}.

\begin{figure}[h]
\setlength{\abovecaptionskip}{0pt}
\centering
\includegraphics[width=1in,height=0.8in]{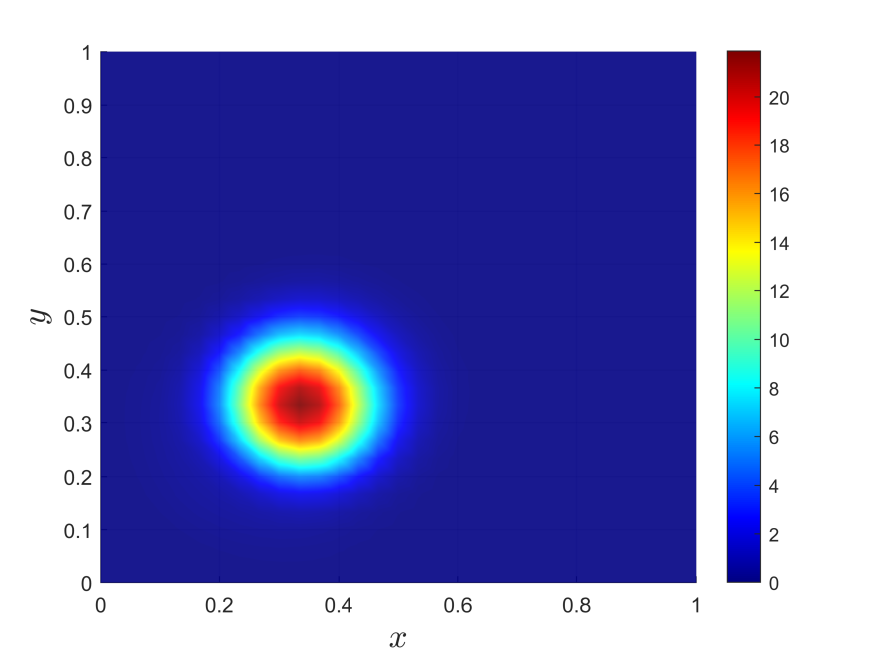}\hspace{-0.1in}
\includegraphics[width=1in,height=0.8in]{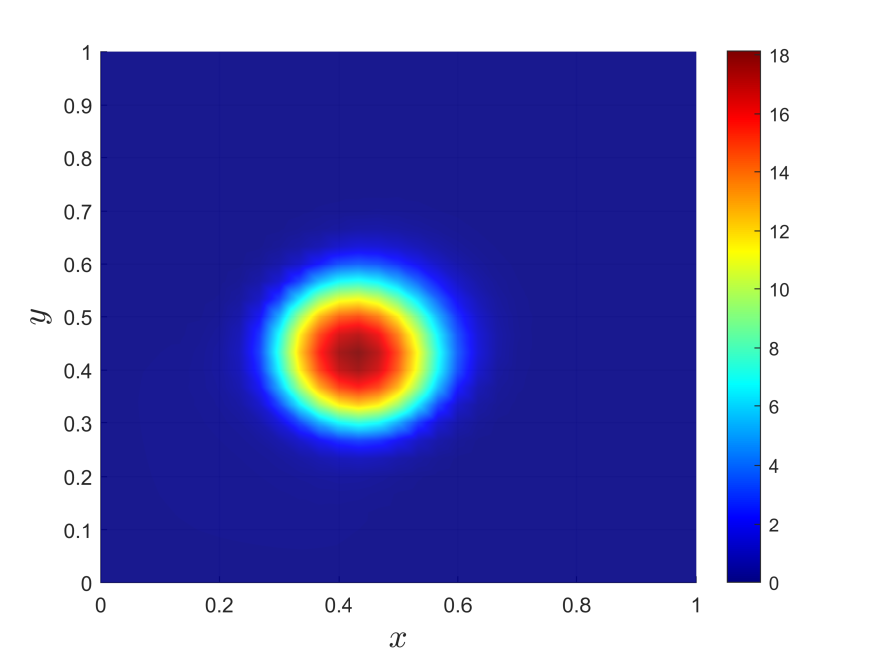}\hspace{-0.1in}
\includegraphics[width=1in,height=0.8in]{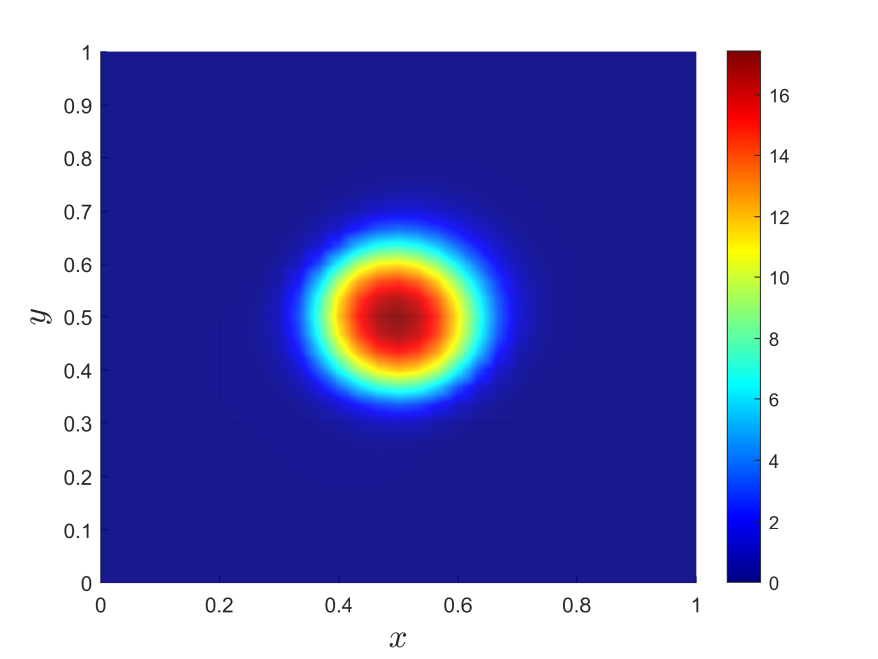}\hspace{-0.1in}
\includegraphics[width=1in,height=0.8in]{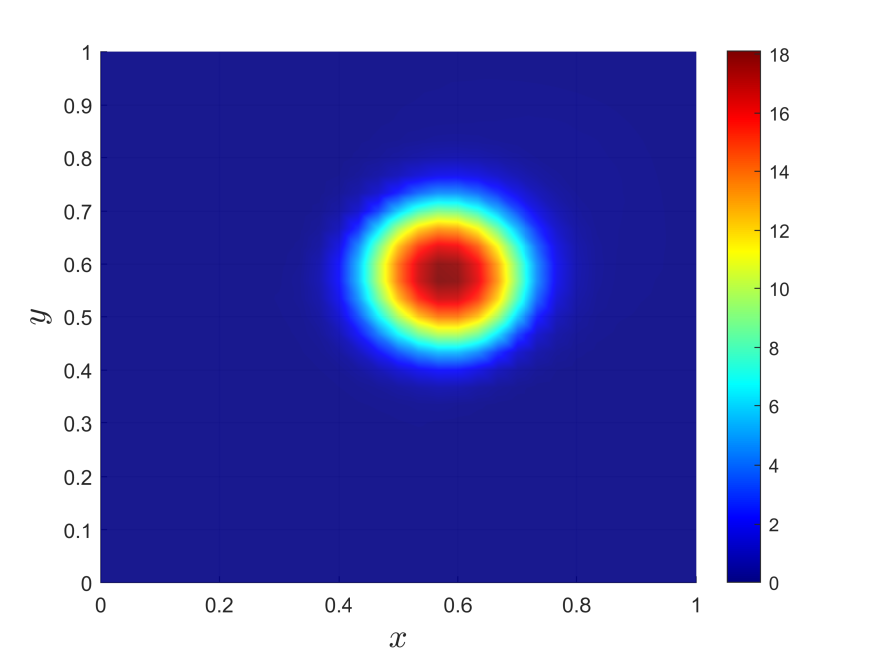}\hspace{-0.1in}
\includegraphics[width=1in,height=0.8in]{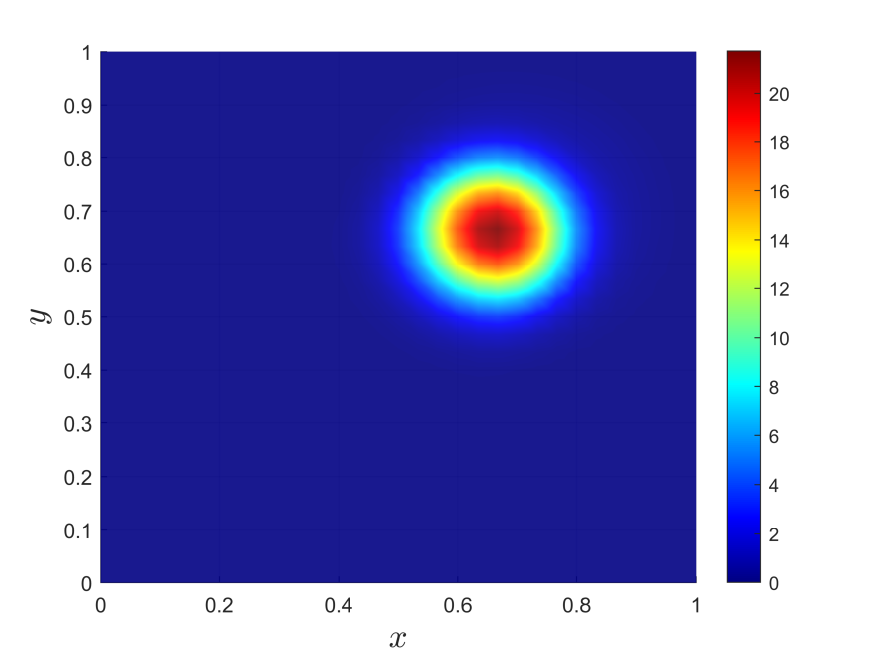}\\
\includegraphics[width=1in,height=0.8in]{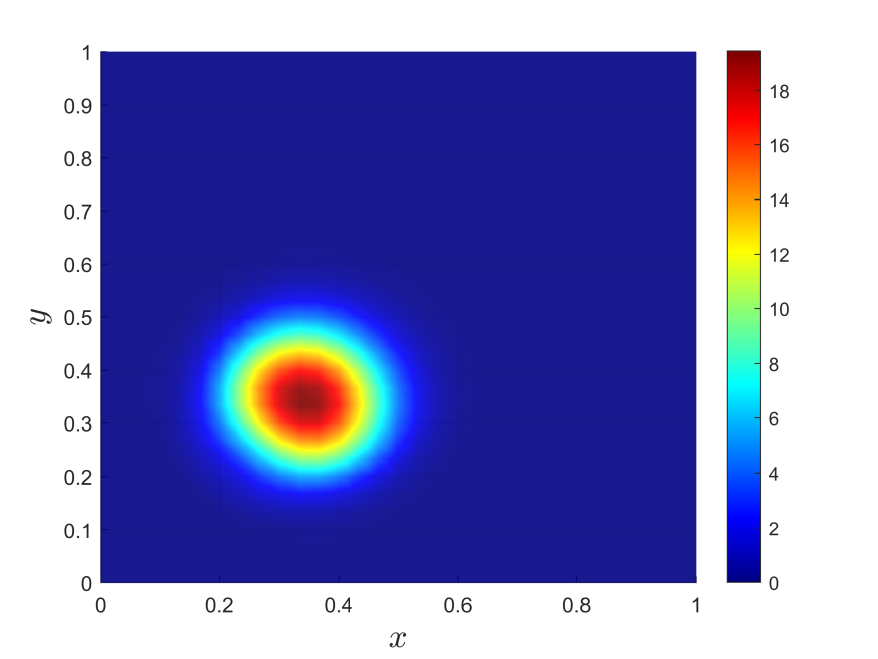}\hspace{-0.1in}
\includegraphics[width=1in,height=0.8in]{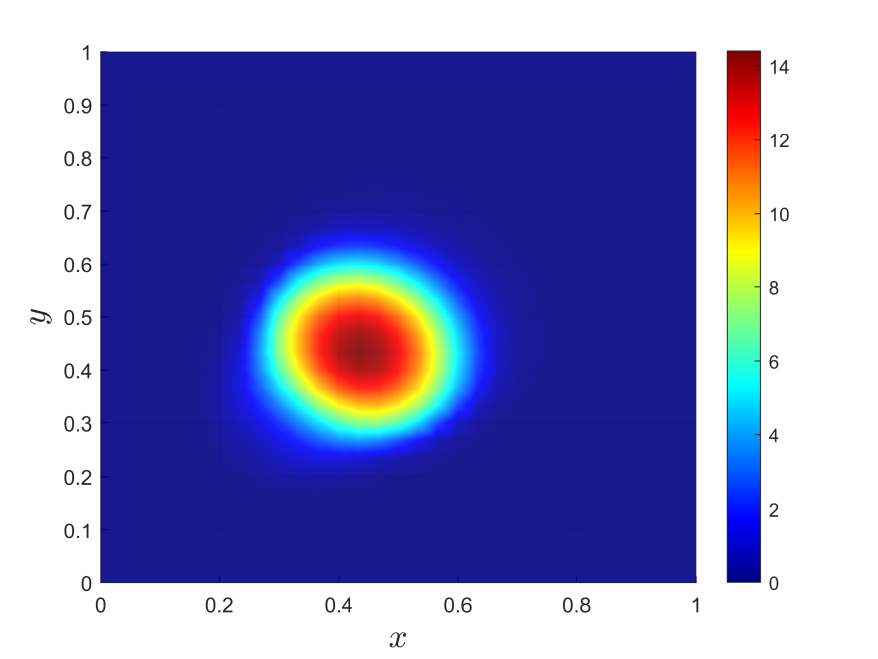}\hspace{-0.1in}
\includegraphics[width=1in,height=0.8in]{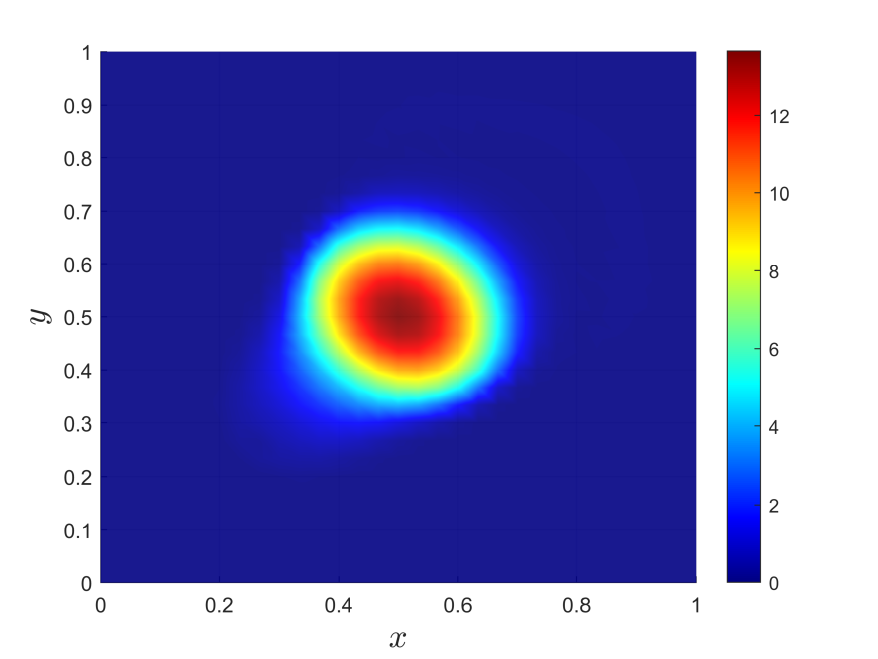}\hspace{-0.1in}
\includegraphics[width=1in,height=0.8in]{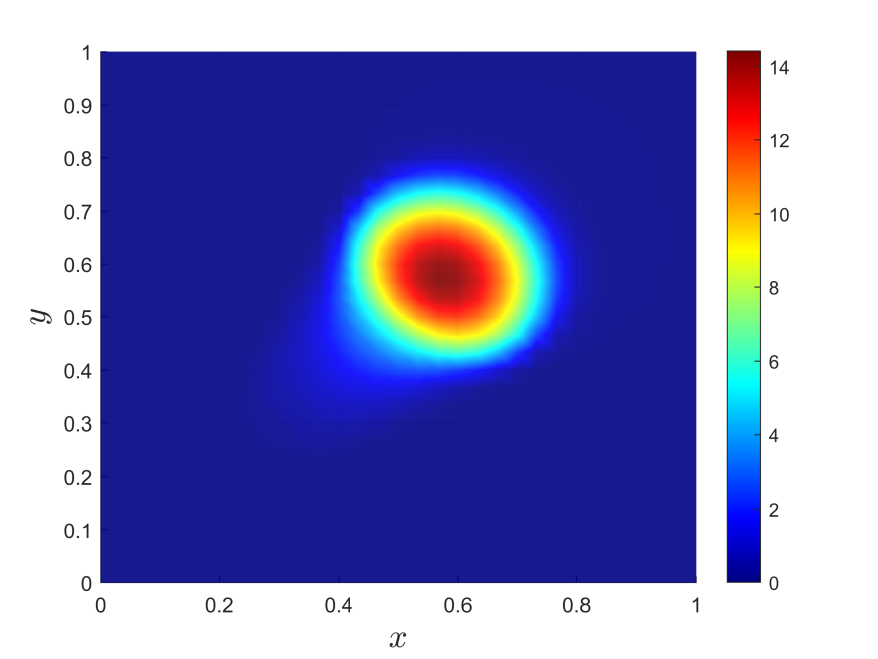}\hspace{-0.1in}
\includegraphics[width=1in,height=0.8in]{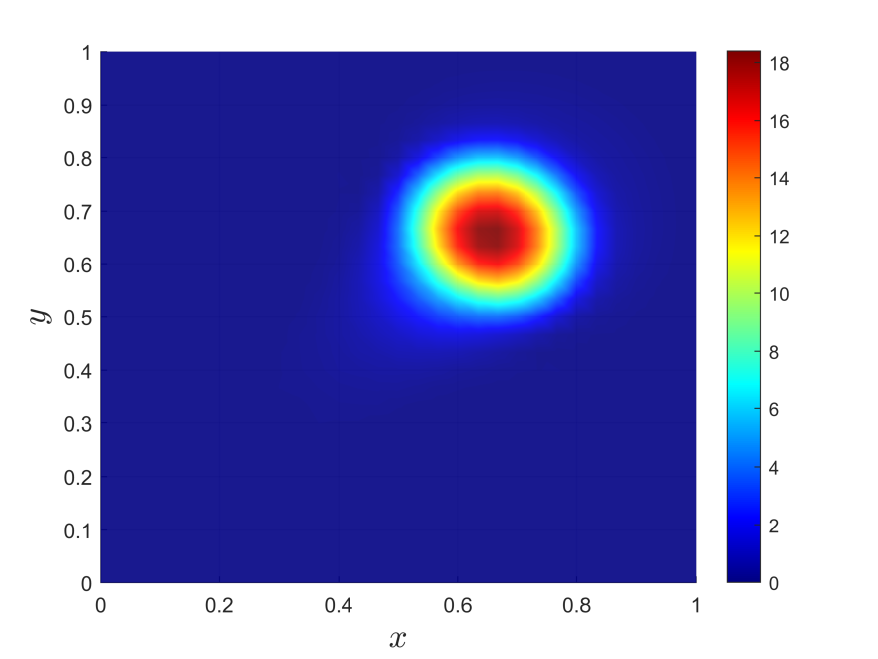}\\
\includegraphics[width=1in,height=0.8in]{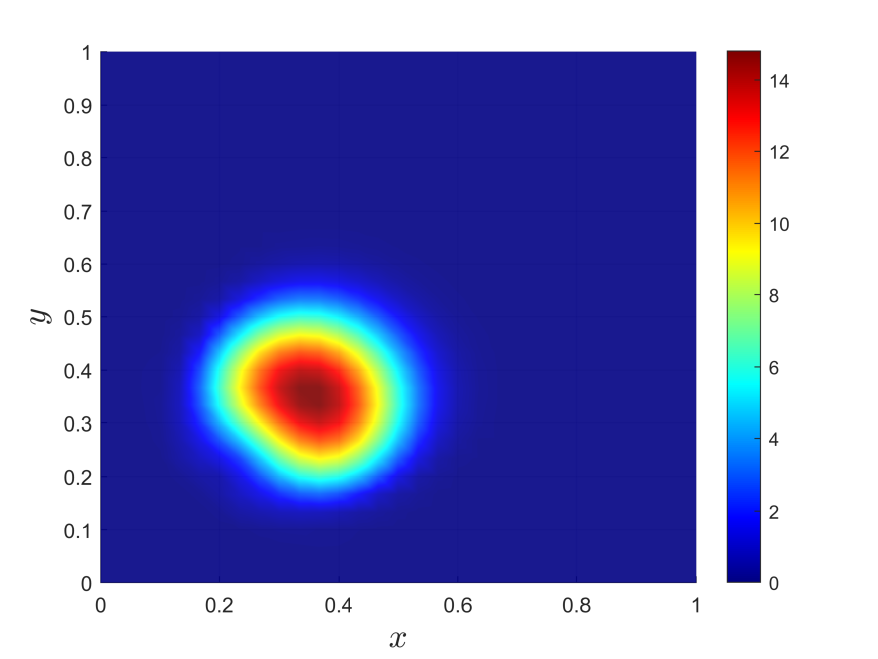}\hspace{-0.1in}
\includegraphics[width=1in,height=0.8in]{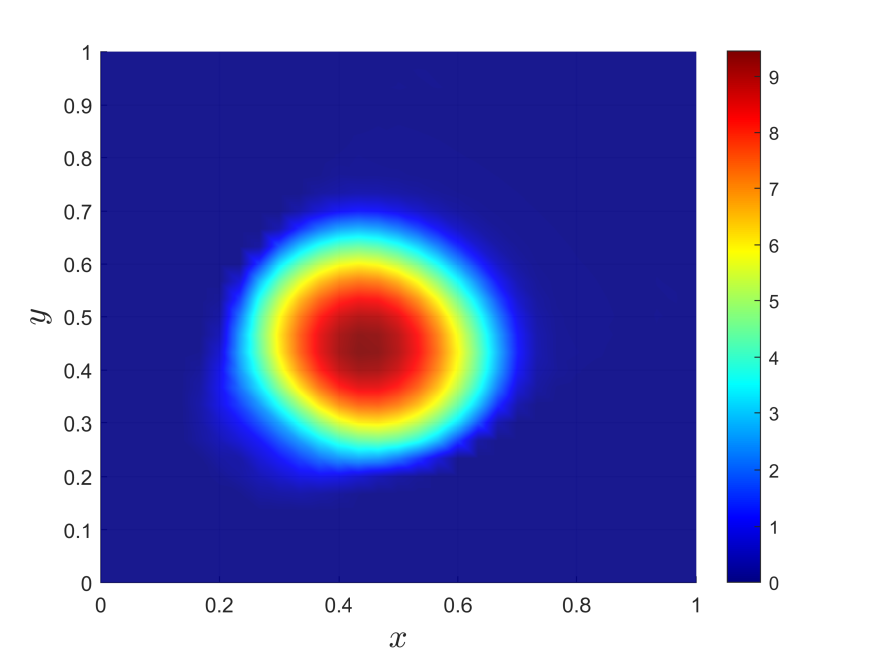}\hspace{-0.1in}
\includegraphics[width=1in,height=0.8in]{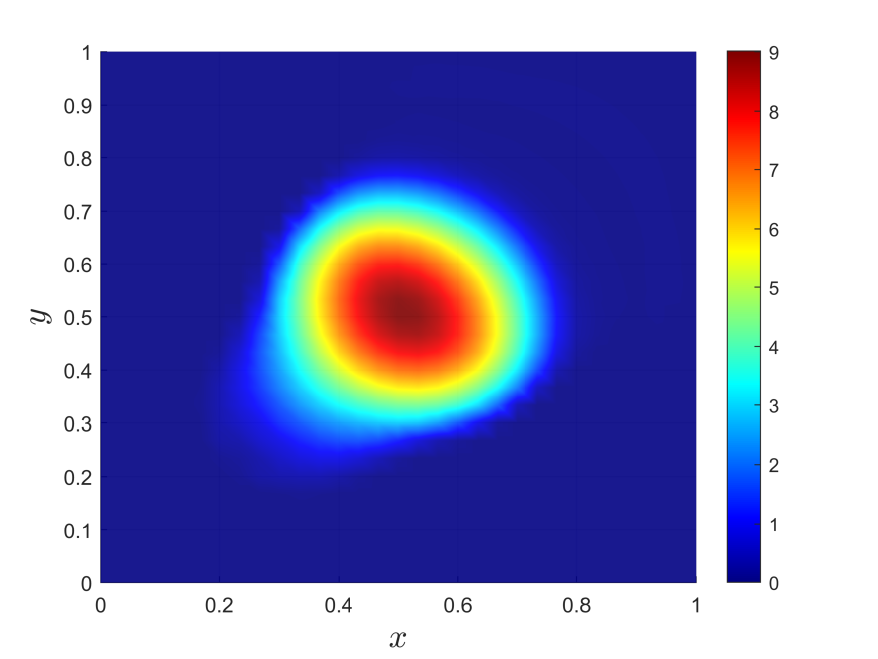}\hspace{-0.1in}
\includegraphics[width=1in,height=0.8in]{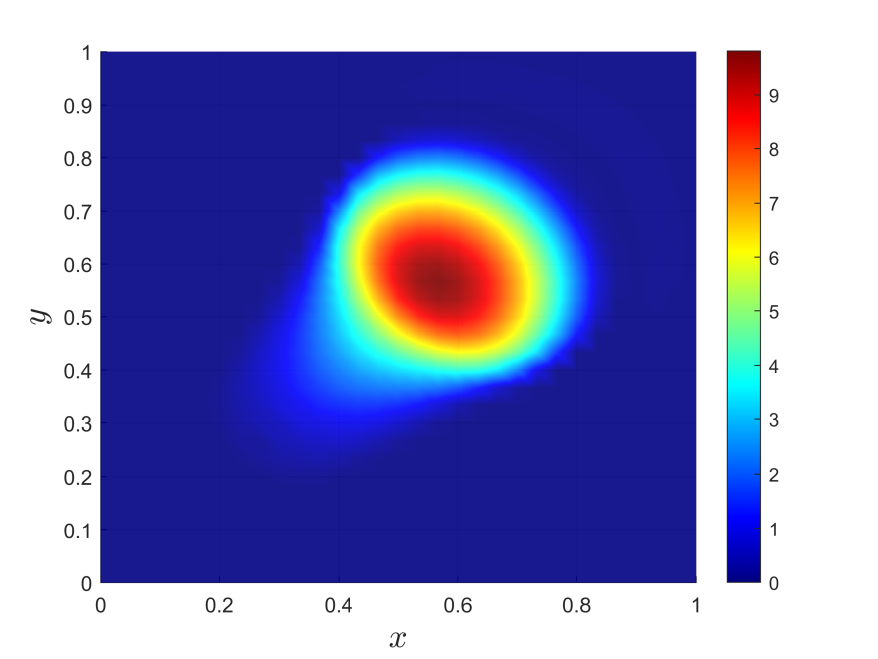}\hspace{-0.1in}
\includegraphics[width=1in,height=0.8in]{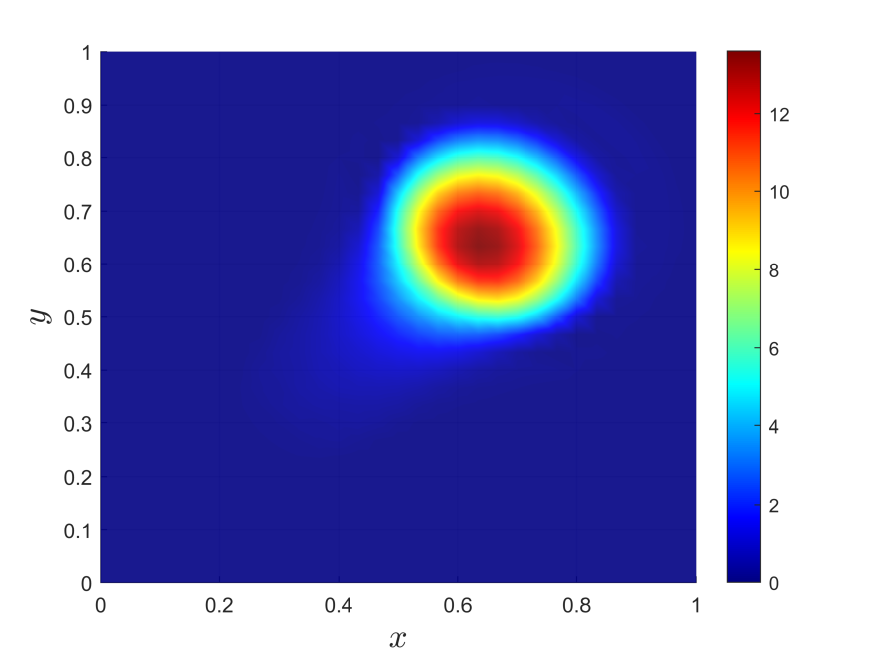}\\
\includegraphics[width=1in,height=0.8in]{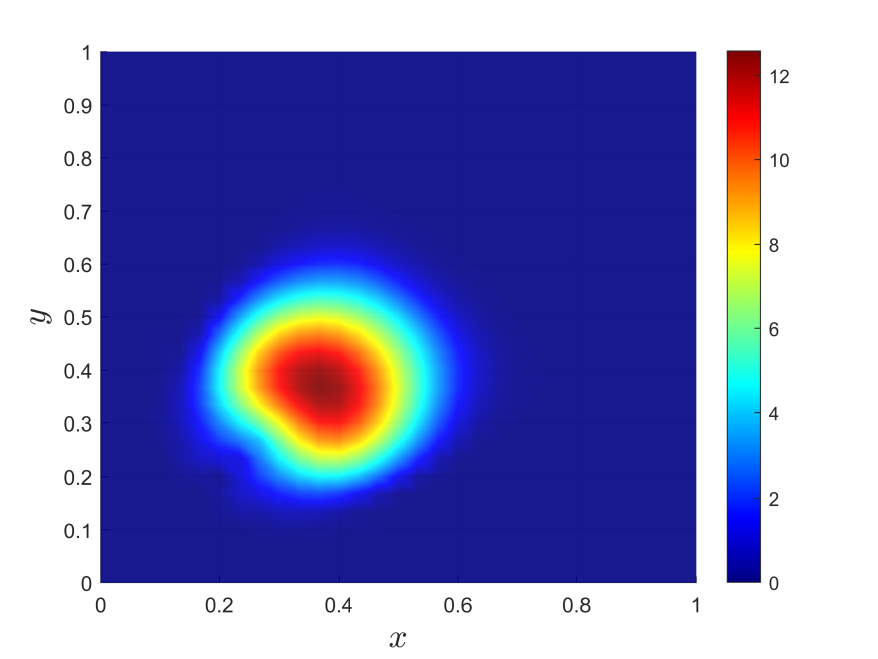}\hspace{-0.1in}
\includegraphics[width=1in,height=0.8in]{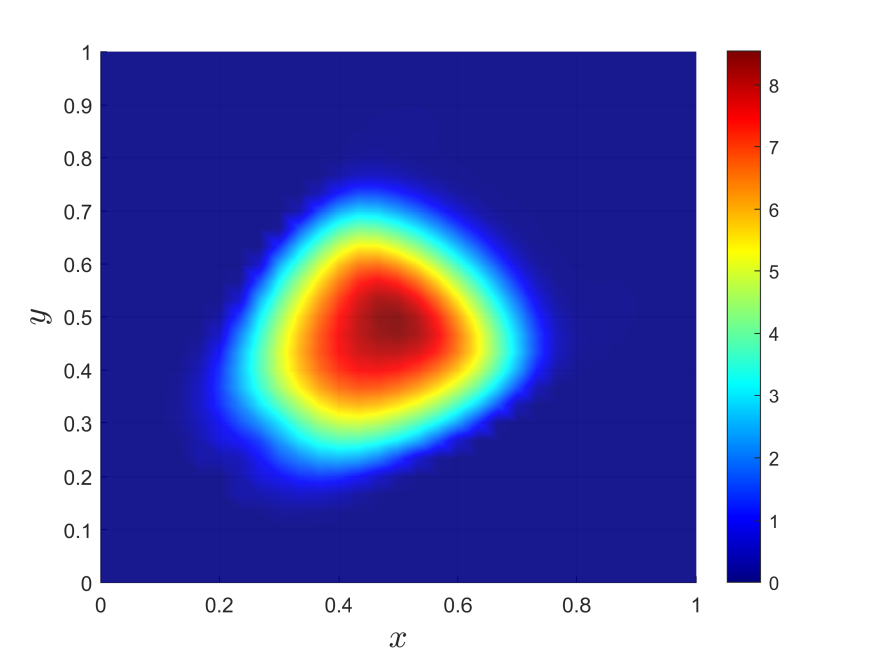}\hspace{-0.1in}
\includegraphics[width=1in,height=0.8in]{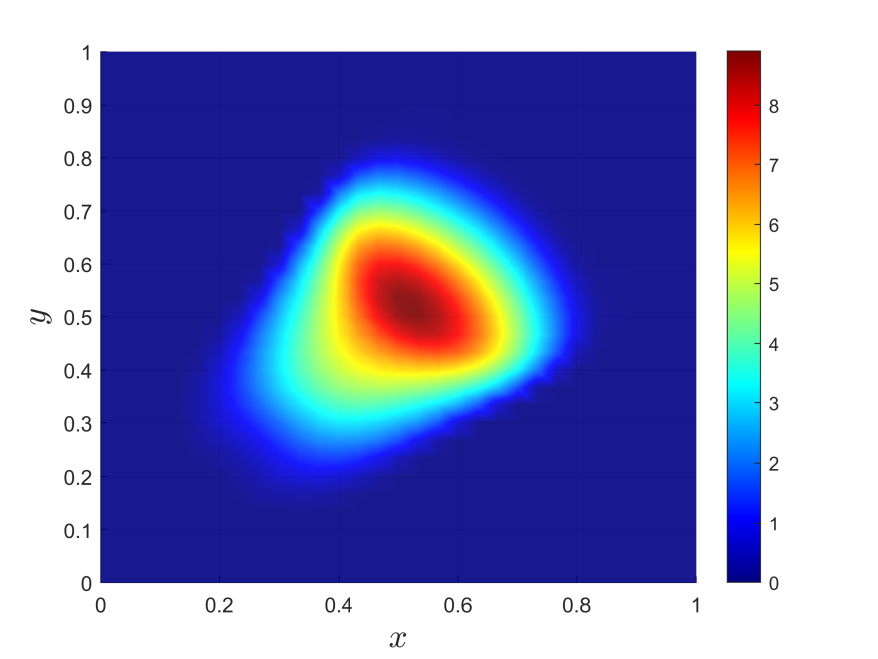}\hspace{-0.1in}
\includegraphics[width=1in,height=0.8in]{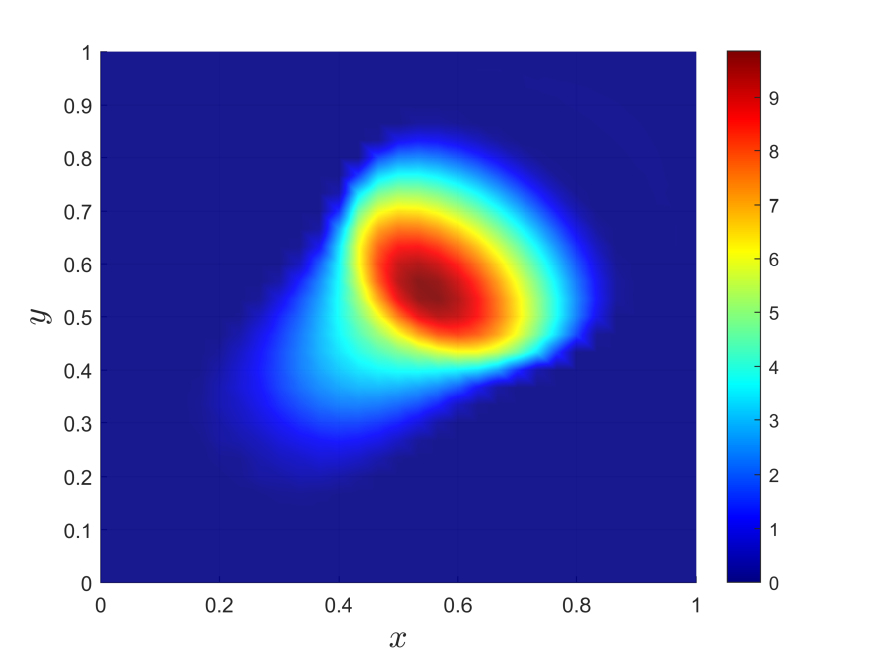}\hspace{-0.1in}
\includegraphics[width=1in,height=0.8in]{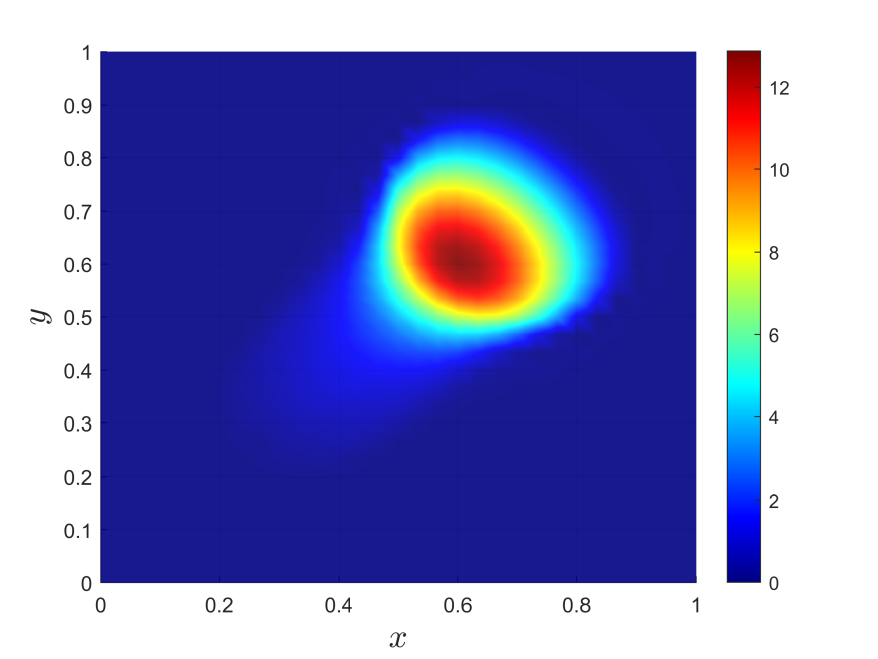}
\caption{Performance of the two-dimensional  OT problem in \S \ref{S:Perf2d}. Left to right: Snapshot of $\rho$ at $t=0.1$,  $0.3$, $0.5$, $0.7$, and $0.9$.
The first row: the integer-order OT problem; the second through fourth rows: the corresponding plots for fractional OT problem with $\alpha =0.9$, $0.8$, and $0.6$ respectively, in  \eqref{Model:set}.}
\label{figure2d1:FOT}
\end{figure}
\subsection{MFP with obstacles}\label{MFP:Obs}

Let $[0, T] = [0, 1]$. We consider a transportation domain $\Omega = [-\f{1}{2}, \f{1}{2}]^2$ in which there is a spatial obstacle that the mass cannot cross and it takes extra efforts for masses (agents) to pass through the obstacle region. One possible application of this problem is the case of a subway gate that a mass of individuals has to cross to reach a final destination. This problem yields the irregular spatial domain which greatly complicates the numerical implementation.
\begin{figure}[h]
\setlength{\abovecaptionskip}{0pt}
\centering
\includegraphics[width=1in,height=1in]{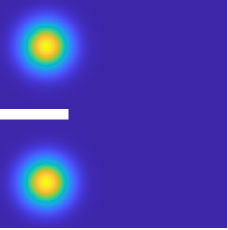} \hspace{0.4in}
\includegraphics[width=1in,height=1in]{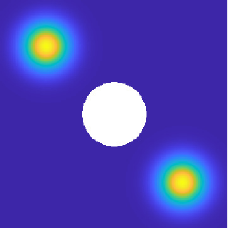}
\caption{Initial density $\rho_0$, terminal density $\rho_1$ and obstacle region $\Omega_0$ highlighted as white regions. Left: $\bm {\mu_0} = (-0.3, 0.3)$ and $\bm {\mu_1} = (-0.3, -0.3)$. Right: $\bm {\mu_0} = (-0.3, 0.3)$ and $\bm {\mu_1}= (0.3, -0.3)$.}
\label{Maze:OT1}
\end{figure}

\begin{figure}[h]
\setlength{\abovecaptionskip}{0pt}
\centering
\includegraphics[width=0.8in,height=0.8in]{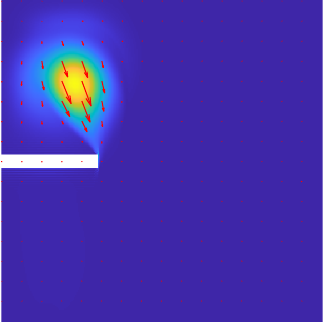}
\includegraphics[width=0.8in,height=0.8in]{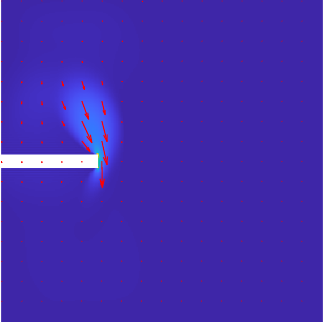}
\includegraphics[width=0.8in,height=0.8in]{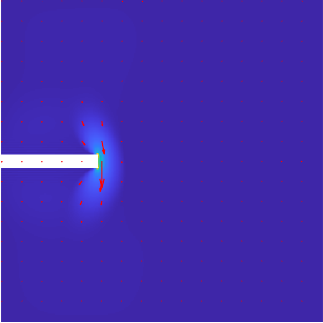}
\includegraphics[width=0.8in,height=0.8in]{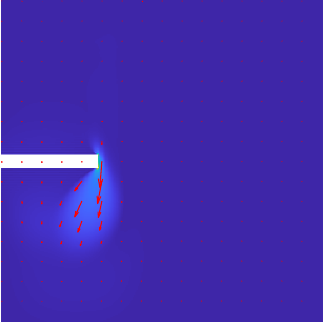}
\includegraphics[width=0.8in,height=0.8in]{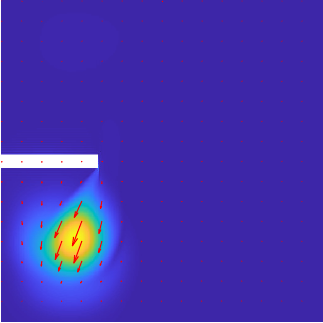} \\[0.1in]
\includegraphics[width=0.8in,height=0.8in]{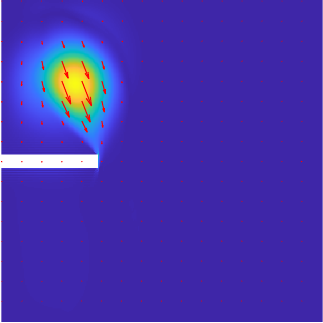}
\includegraphics[width=0.8in,height=0.8in]{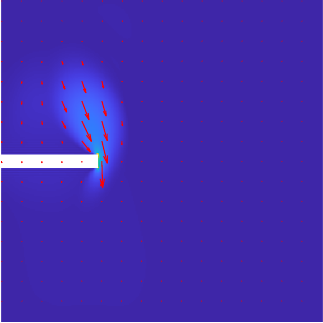}
\includegraphics[width=0.8in,height=0.8in]{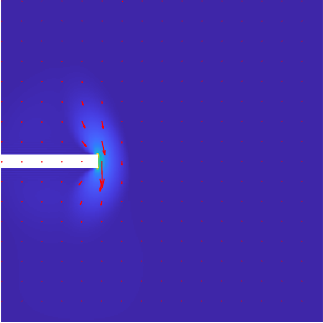}
\includegraphics[width=0.8in,height=0.8in]{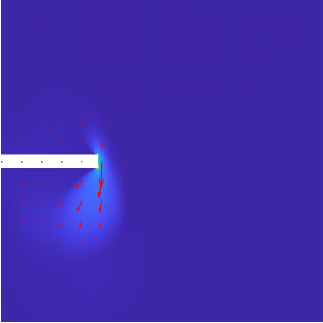}
\includegraphics[width=0.8in,height=0.8in]{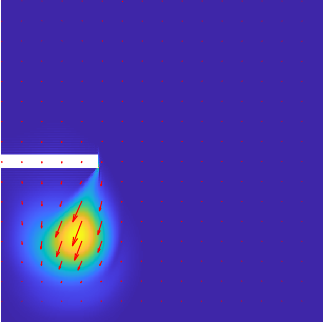} \\[0.1in]
\includegraphics[width=0.8in,height=0.8in]{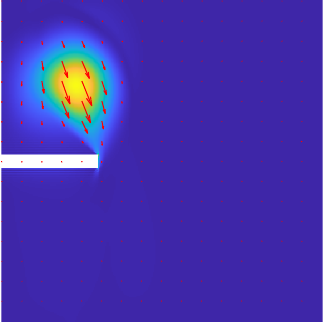}
\includegraphics[width=0.8in,height=0.8in]{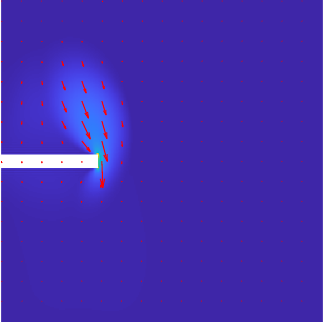}
\includegraphics[width=0.8in,height=0.8in]{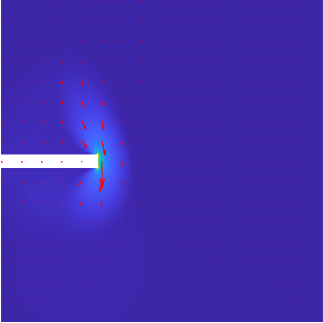}
\includegraphics[width=0.8in,height=0.8in]{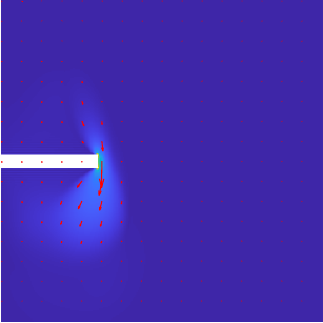}
\includegraphics[width=0.8in,height=0.8in]{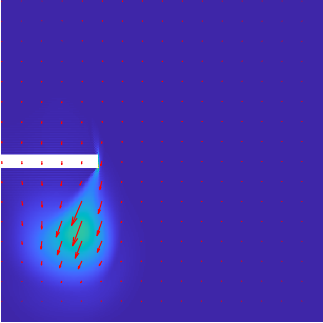} \\[0.1in]
\includegraphics[width=0.8in,height=0.8in]{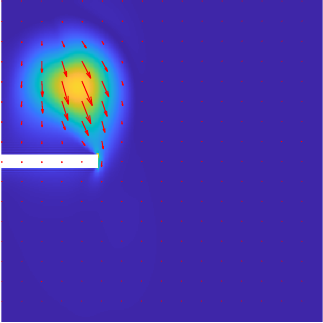}
\includegraphics[width=0.8in,height=0.8in]{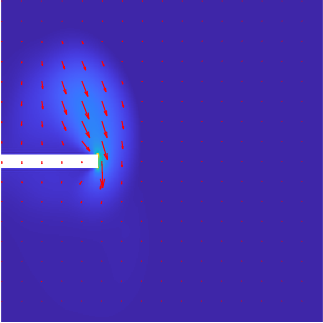}
\includegraphics[width=0.8in,height=0.8in]{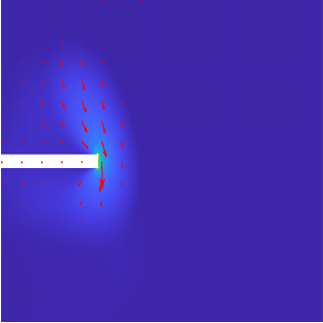}
\includegraphics[width=0.8in,height=0.8in]{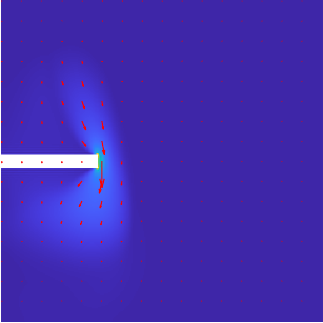}
\includegraphics[width=0.8in,height=0.8in]{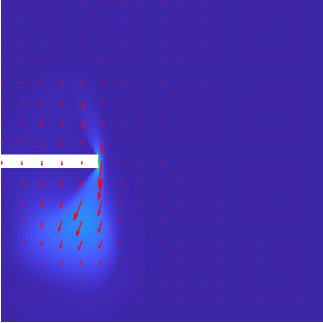} \\
\caption{Performance of the two-dimensional  MFP problem at $t=0.1$, $0.3$, $0.5$, $0.7$, and $0.9$  in \S \ref{MFP:Obs}.
The first row: the integer-order MFP problem; the second through fourth rows: the corresponding plots for fractional MFP problem with $\alpha =0.9$, $0.8$, and $0.6$ respectively, in  \eqref{Model:set}.}
\label{Maze:FOT1}
\end{figure}

As discussed in \S \ref{frac:MFP}, one potential method of handling the irregular domain is to set $Q(\bm x) = \mathbbm {1}_{\Omega_0}(\bm x)$ in the MFP problem \eqref{Model:MFP}--\eqref{F},   an indicator function of the obstacle region $\Omega_0$
and to choose a very large parameter $\lambda_Q$ (e.g.,  $\lambda_Q = 8 \times 10^4$) and $\lambda_R =0$ in \eqref{F}.  We choose the initial and terminal densities to be Gaussian distribution densities with mean $\bm {\mu_0} $, $\bm {\mu_1} $ and the standard deviation $ {\sigma_0} =  {\sigma_1} = 7 \times 10^{-2}$, respectively. Different choices of $\rho_0$, $\rho_1$, and $Q$ are shown in Figure \ref{Maze:OT1}.

\begin{figure}[!htbp]
\setlength{\abovecaptionskip}{0pt}
\centering
\includegraphics[width=0.8in,height=0.8in]{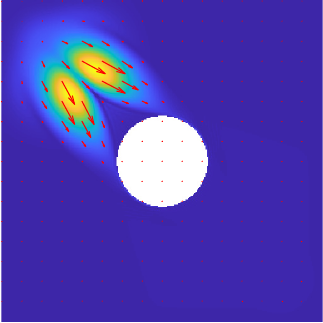}
\includegraphics[width=0.8in,height=0.8in]{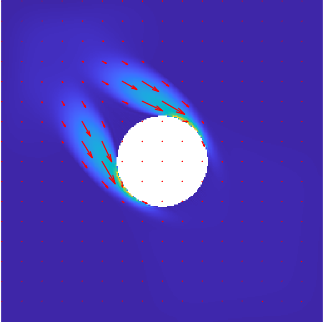}
\includegraphics[width=0.8in,height=0.8in]{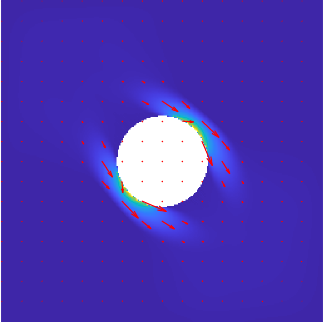}
\includegraphics[width=0.8in,height=0.8in]{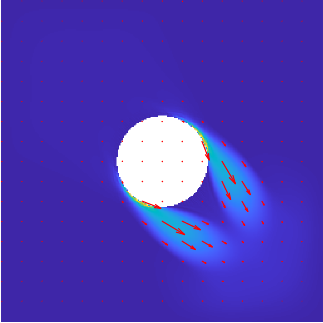}
\includegraphics[width=0.8in,height=0.8in]{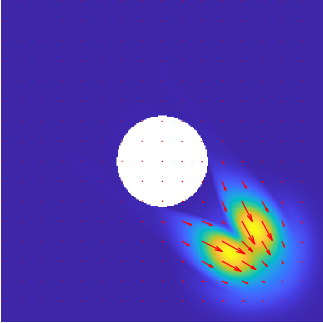} \\[0.1in]
\includegraphics[width=0.8in,height=0.8in]{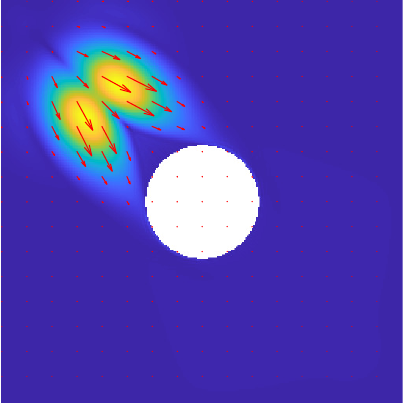}
\includegraphics[width=0.8in,height=0.8in]{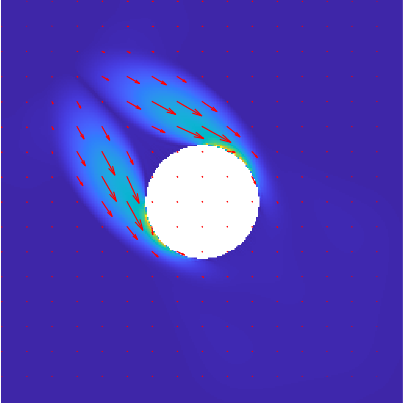}
\includegraphics[width=0.8in,height=0.8in]{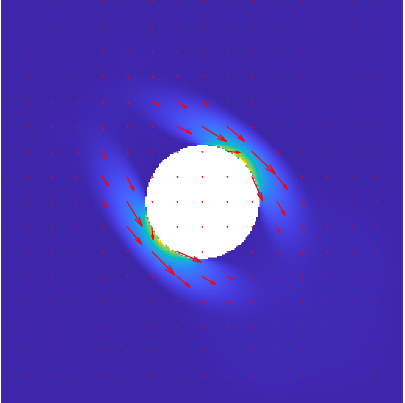}
\includegraphics[width=0.8in,height=0.8in]{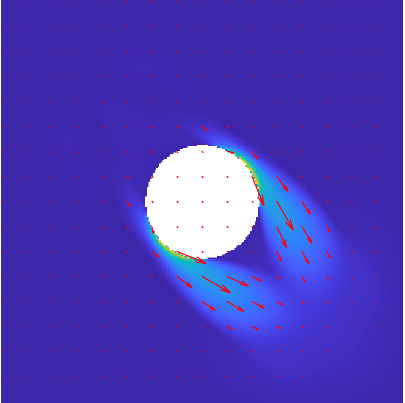}
\includegraphics[width=0.8in,height=0.8in]{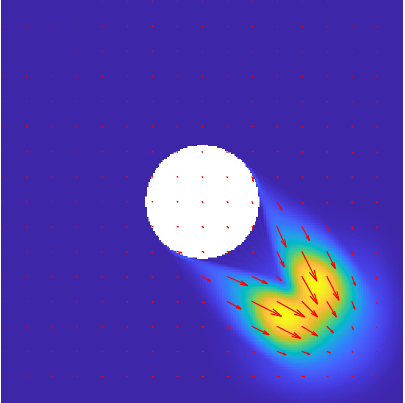} \\[0.1in]
\includegraphics[width=0.8in,height=0.8in]{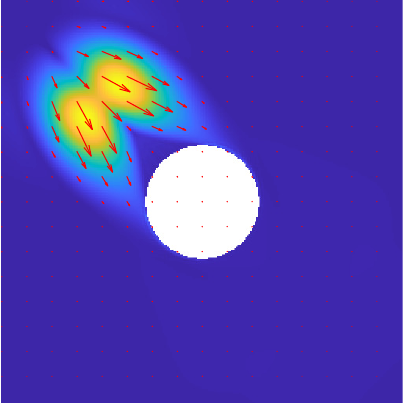}
\includegraphics[width=0.8in,height=0.8in]{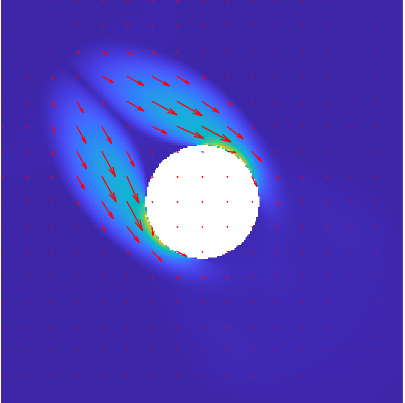}
\includegraphics[width=0.8in,height=0.8in]{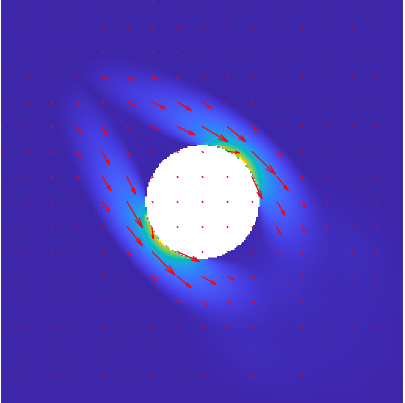}
\includegraphics[width=0.8in,height=0.8in]{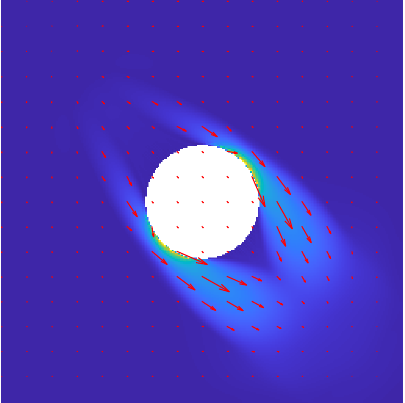}
\includegraphics[width=0.8in,height=0.8in]{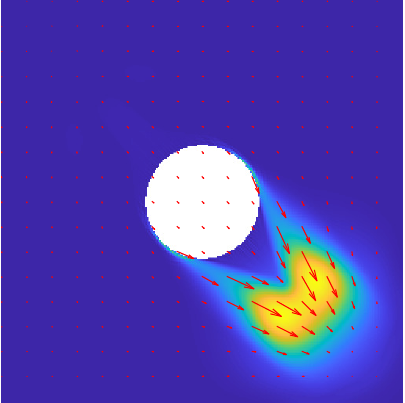} \\[0.1in]
\includegraphics[width=0.8in,height=0.8in]{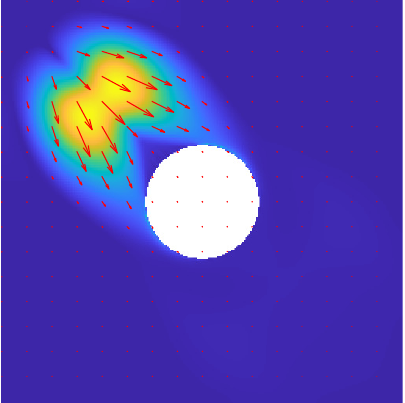}
\includegraphics[width=0.8in,height=0.8in]{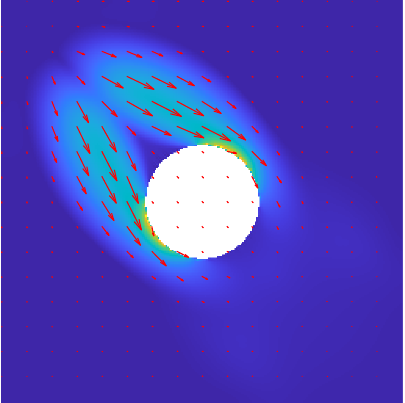}
\includegraphics[width=0.8in,height=0.8in]{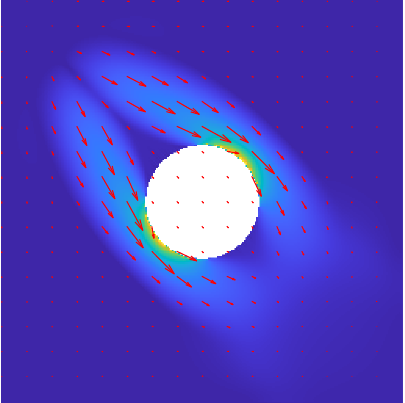}
\includegraphics[width=0.8in,height=0.8in]{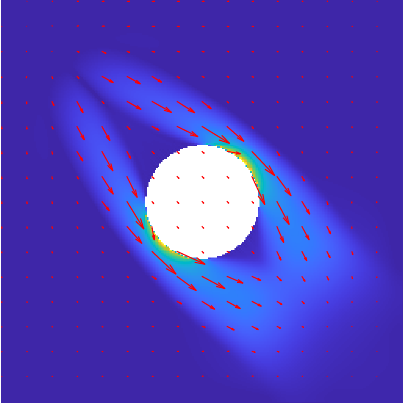}
\includegraphics[width=0.8in,height=0.8in]{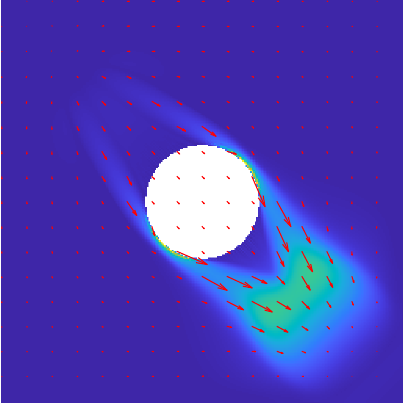} \\
\caption{Performance of the two-dimensional  MFP problem at $t=0.1$, $0.3$, $0.5$, $0.7$, and $0.9$  in \S \ref{MFP:Obs}.
The first row: the integer-order MFP problem; the second through fourth rows: the corresponding plots for fractional MFP problem with $\alpha =0.9$, $0.8$, and $0.6$ respectively, in  \eqref{Model:set}.}
\label{Maze:FOT2}
\end{figure}

The snapshots of the mass evolution shown in Figures \ref{Maze:FOT1}--\ref{Maze:FOT2} demonstrate that  the mass governed by both the integer-order PDE and the fractional PDEs with different fractional orders  circumvents the obstacles very well, which meets our expectation. In addition, the mass governed by the fractional PDE exhibits heavy tails and propagates slower compared with its integer-order counterparts, and these observations are consistent with the discussions in \S \ref{S:Perf2d}.

\subsection{MFP between images}\label{MFP:image}

Our last example concerns with OT and MFP problems \eqref{Model:MFP} between images, which shows the effectiveness and flexibility of our proposed Algorithm  \ref{algorithm}.
The data are as follows: $\Omega =[0,1]^2 $, $[0, T] = [0, 1]$,  the initial and terminal densities $\rho_0$, $\rho_1$, and  the interaction penalty
$$Q(\bm{x})=\left\{
\begin{array}{ll}
0, & \rho_0(\bm{x}) \neq 0 \text { or } \rho_1(\bm{x}) \neq 0, \\ [0.1in]
1, & \text { otherwise }
\end{array}\right.$$
 are shown in  Figure \ref{Image:OT1}.

\begin{figure}[!htbp]
\setlength{\abovecaptionskip}{0pt}
\centering
\includegraphics[width=1in,height=1in]{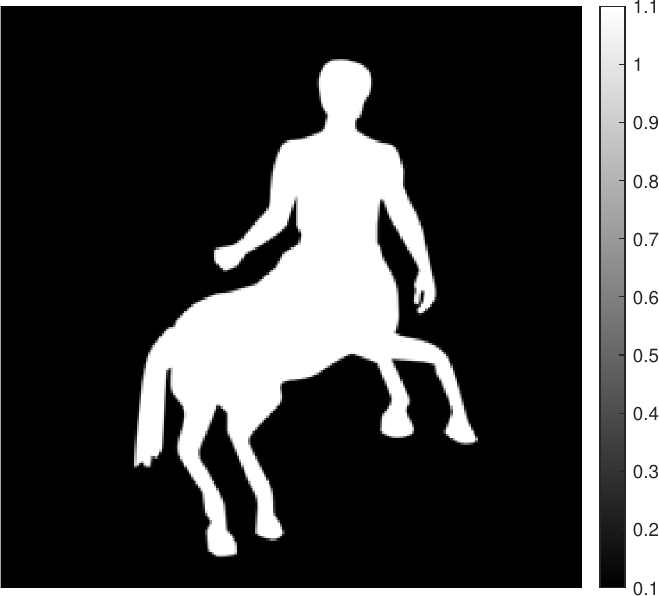} \hspace{0.2in}
\includegraphics[width=1in,height=1in]{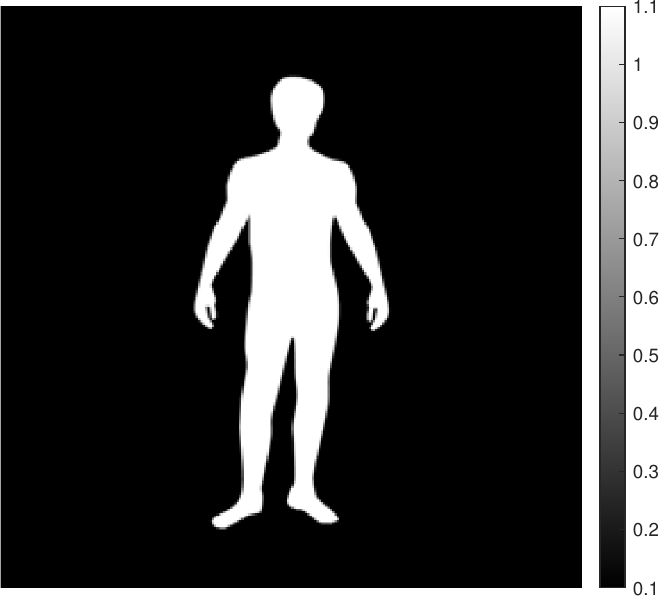} \hspace{0.2in}
\includegraphics[width=1in,height=1in]{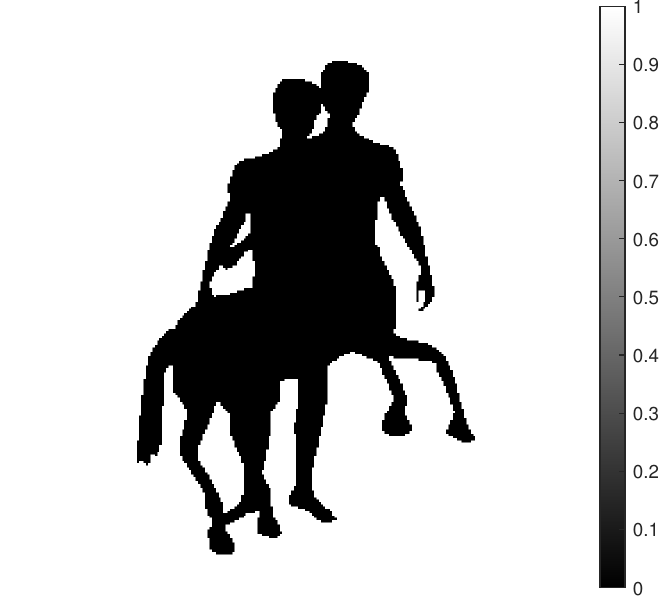}\hspace{0.2in}
\caption{Left to right: initial density $\rho_0$, final density $\rho_1$, and interaction penalty $Q$.}
\label{Image:OT1}
\end{figure}

\begin{figure}[!htbp]
\setlength{\abovecaptionskip}{0pt}
\centering
\includegraphics[scale=0.35]{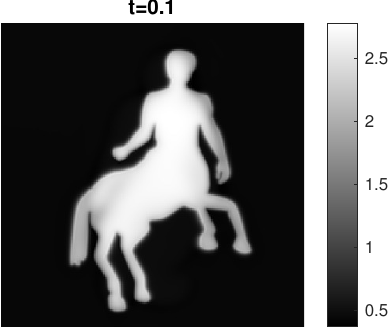}
\includegraphics[scale=0.35]{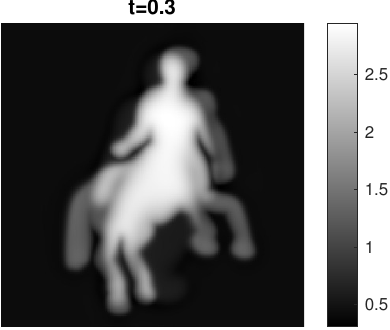}
\includegraphics[scale=0.35]{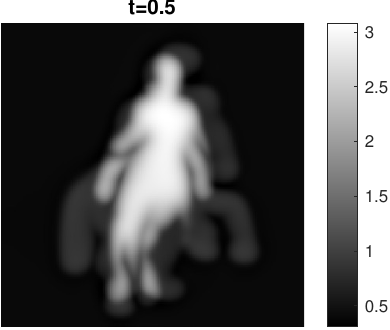}
\includegraphics[scale=0.35]{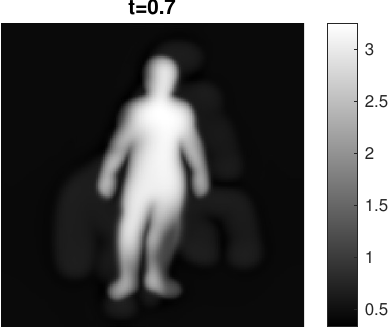}
\includegraphics[scale=0.35]{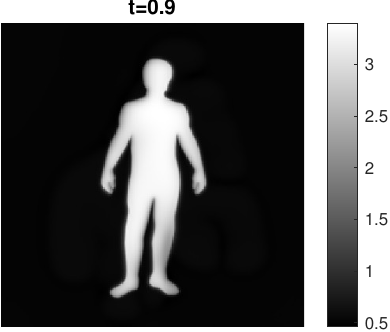}\\[0.1in]
\includegraphics[scale=0.37]{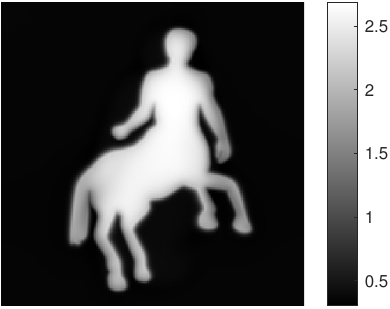}
\includegraphics[scale=0.35]{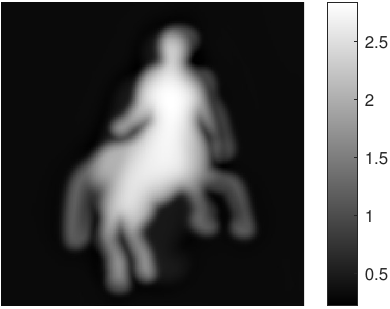}
\includegraphics[scale=0.35]{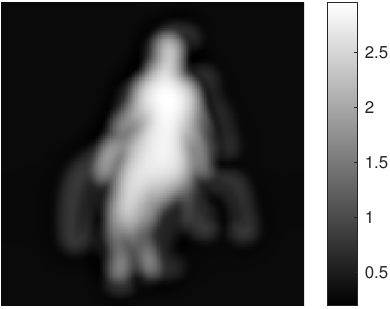}
\includegraphics[scale=0.35]{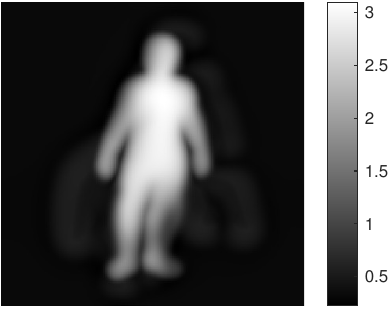}
\includegraphics[scale=0.35]{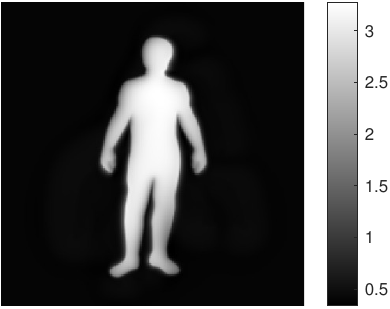}\\[0.1in]
\includegraphics[scale=0.35]{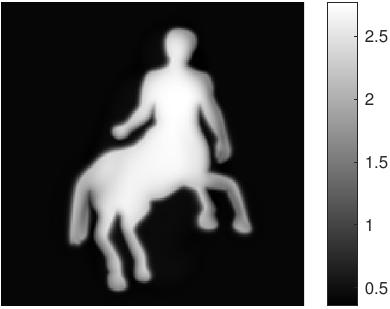}
\includegraphics[scale=0.35]{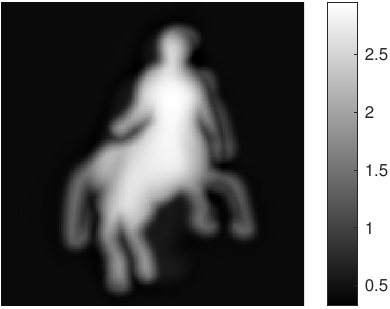}
\includegraphics[scale=0.35]{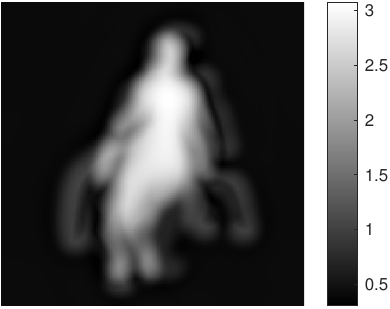}
\includegraphics[scale=0.35]{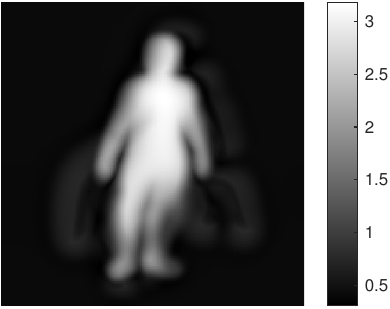}
\includegraphics[scale=0.35]{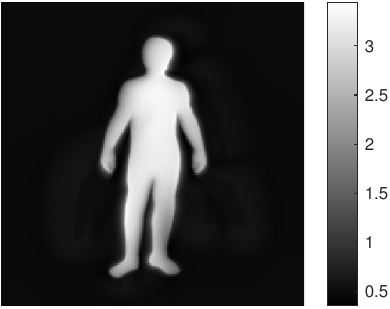}\\[0.1in]
\includegraphics[scale=0.35]{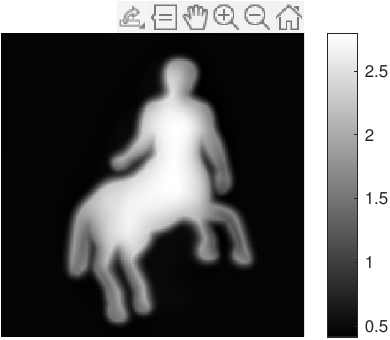}
\includegraphics[scale=0.35]{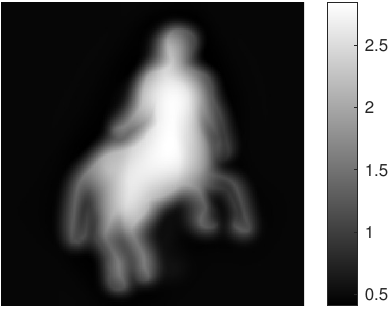}
\includegraphics[scale=0.35]{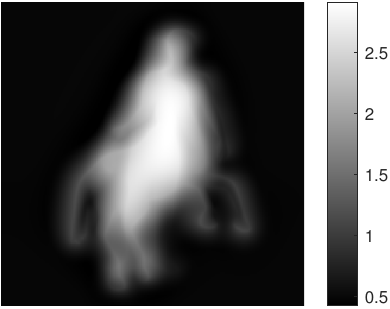}
\includegraphics[scale=0.35]{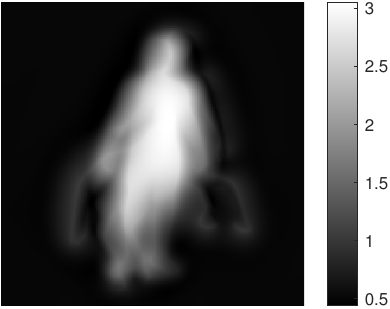}
\includegraphics[scale=0.35]{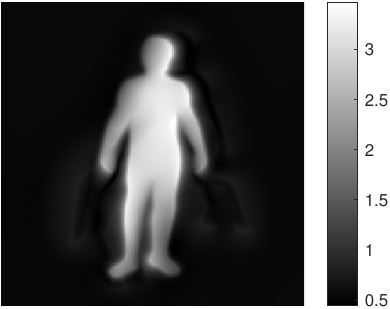}\\[0.1in]
\caption{Case 1: Performance of the two-dimensional OT problem at $t=0.1$, $0.3$, $0.5$, $0.7$, and $0.9$  in \S \ref{MFP:image}.
The first row: the integer-order OT problem; the second through fourth rows: the corresponding plots for fractional OT problem with $\alpha =0.9$, $0.8$, and $0.6$ respectively, in  \eqref{Model:set}.}
\label{Image:FOT1}
\end{figure}

\begin{figure}[h]
\setlength{\abovecaptionskip}{0pt}
\centering
\includegraphics[scale=0.35]{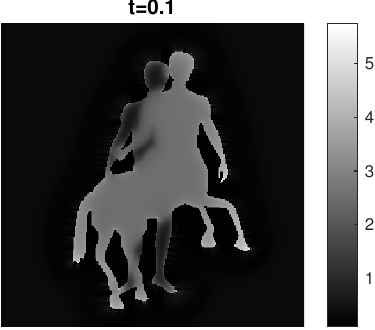}
\includegraphics[scale=0.35]{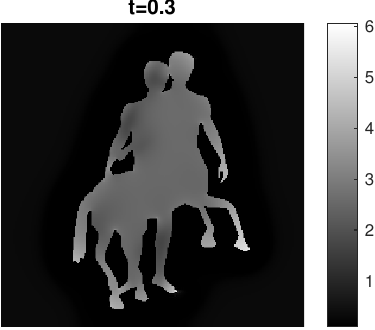}
\includegraphics[scale=0.35]{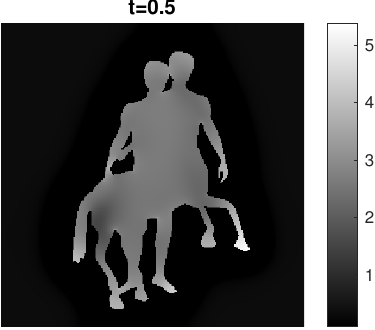}
\includegraphics[scale=0.35]{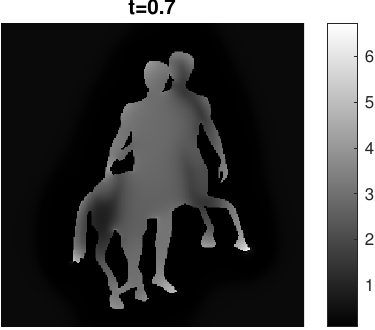}
\includegraphics[scale=0.35]{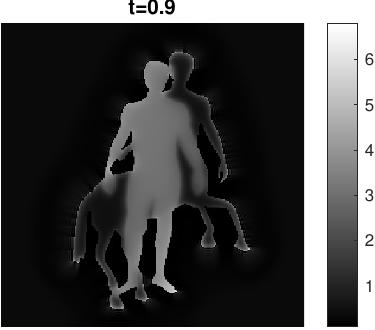} \\[0.1in]
\includegraphics[scale=0.35]{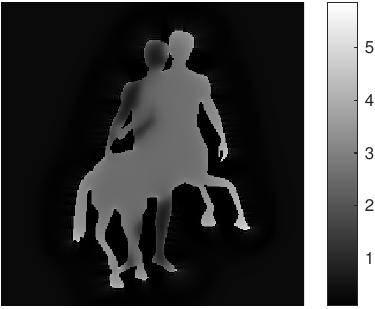}
\includegraphics[scale=0.35]{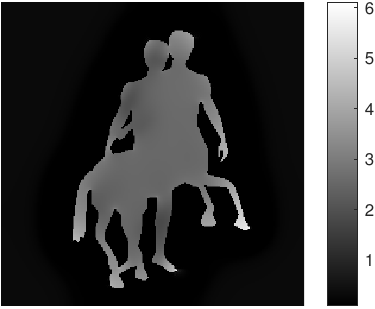}
\includegraphics[scale=0.35]{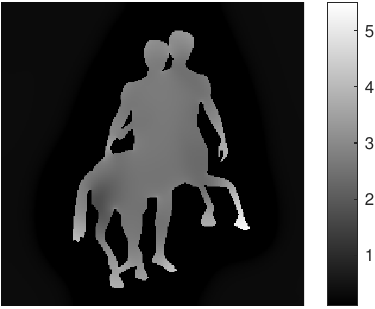}
\includegraphics[scale=0.35]{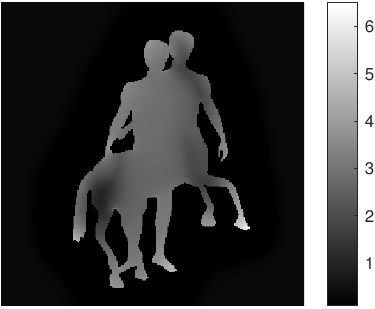}
\includegraphics[scale=0.35]{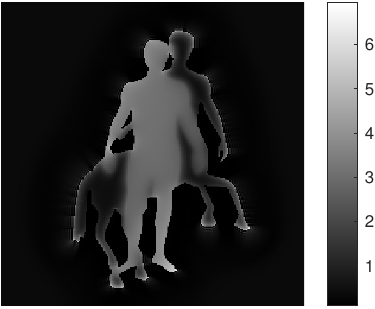} \\[0.1in]
\includegraphics[scale=0.35]{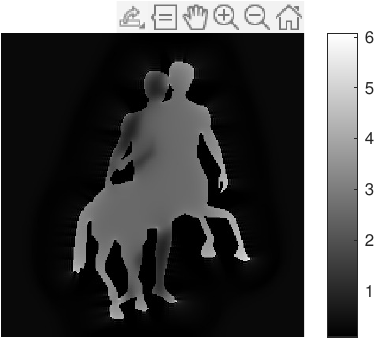}
\includegraphics[scale=0.35]{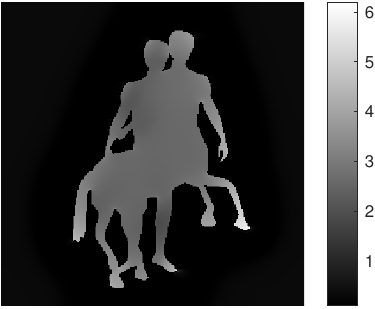}
\includegraphics[scale=0.35]{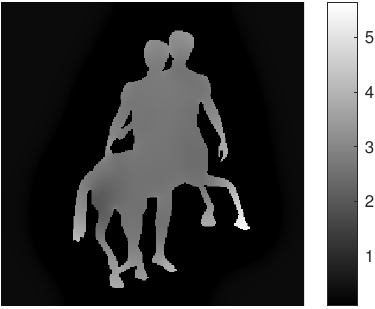}
\includegraphics[scale=0.35]{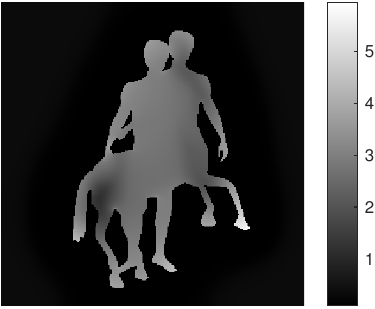}
\includegraphics[scale=0.35]{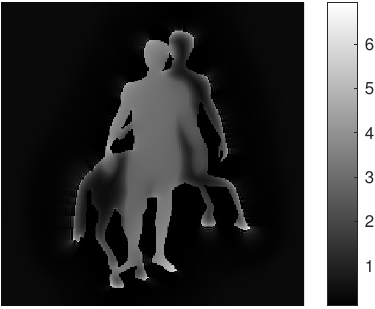}\\[0.1in]
\includegraphics[scale=0.35]{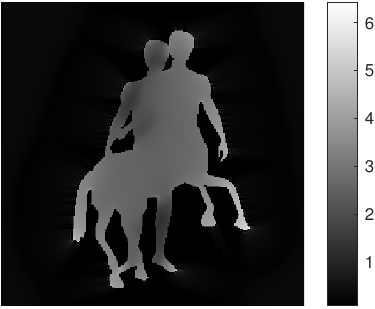}
\includegraphics[scale=0.35]{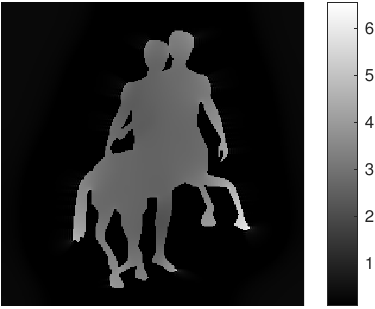}
\includegraphics[scale=0.35]{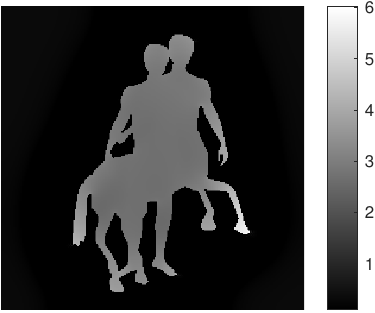}
\includegraphics[scale=0.35]{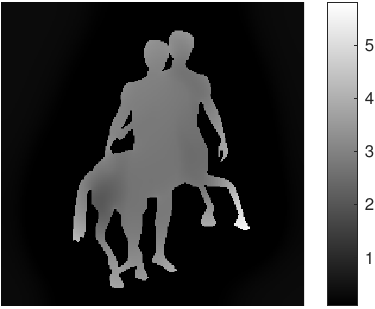}
\includegraphics[scale=0.35]{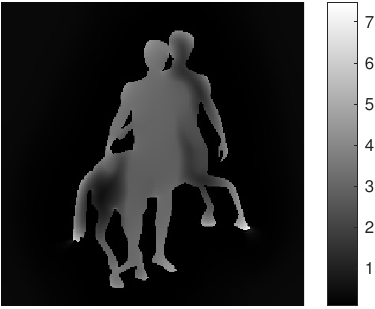}\\[0.1in]
\caption{Case 2: Performance of the two-dimensional MFP problem at $t=0.1$, $0.3$, $0.5$, $0.7$, and $0.9$   in \S \ref{MFP:image}.
The first row: the integer-order MFP problem; the second through fourth rows: the corresponding plots for fractional MFP problem with $\alpha =0.9$, $0.8$, and $0.6$ respectively, in  \eqref{Model:set}.}
\label{Image:FOT2}
\end{figure}

We apply the proposed algorithm to the following problems:

\begin{itemize}
  \item (Case 1)  OT problem \eqref{Model:OT}
  \item  (Case 2) MFP problem \eqref{Model:MFP} with
    $   R (\rho) : = \frac{\rho^2}{2}$,  $ \lambda_R     = 0.01$ and $ \lambda_Q =0.1$ in \eqref{F}
\end{itemize}

 Snapshots of the density evolution  are presented in Figures \ref{Image:FOT1}--\ref{Image:FOT2}, from which we have the following observations: \textbf{(i)} Case 1 (OT) produces a more free density evolution path while the interaction cost in Case 2 restricts the mass evolution within the dark region of the interaction penalty $Q$ (cf.  Figure \ref{Image:OT1}), which is consistent with the fact that MFP sets an additional moving  preference for the density. For both two cases, \textbf{(ii)} the density of fractional MFP problem ($\alpha=0.6$) at $t=0.7$s exhibits similar behavior as that of fractional MFP problem ($\alpha=0.9$)   and that of integer-order MFP problem at $t=0.5$s, which indicates that the density of fractional MFP problem evolves slower than that of integer-order MFP problem, and exhibits comparatively faster evolution speed as the fractional order increases, as we have found in Section \ref{S:Perf}.

\section*{Acknowledgments}

Y. Li's work is partially supported by the National Science Foundation under Grant DMS-2012291 and by the Start-up funding of W. Li at University of South Carolina; W. Li's work is partially supported by AFOSR YIP award No. FA9550-23-1-008, NSF DMS-2245097, and NSF RTG: 2038080; H. Wang's work is partially supported by the National Science Foundation under Grants DMS-2012291 and DMS 2245097.

\newpage

	\end{document}